\documentclass[a4,12pt]{amsart}

\oddsidemargin 0mm
\evensidemargin 0mm
\topmargin 0mm
\textwidth 160mm
\textheight 230mm
\tolerance=9999

\usepackage{amssymb}
\usepackage{amstext}
\usepackage{amsmath}
\usepackage{amscd}
\usepackage{amsthm}
\usepackage{amsfonts}
\usepackage{enumerate}
\usepackage{graphicx}
\usepackage{latexsym}
\usepackage[usenames]{color}

\newtheorem*{maintheorem}{Main Theorem}
\newtheorem*{corollary*}{Corollary}
\newtheorem{theorem}{Theorem}[section]

\newtheorem{corollary}[theorem]{Corollary}
\newtheorem{lemma}[theorem]{Lemma}
\newtheorem{proposition}[theorem]{Proposition}

\newtheorem{claim}{Claim}
\newtheorem*{claim*}{Claim}

\theoremstyle{definition}
\newtheorem{definition}[theorem]{Definition}
\newtheorem{remark}[theorem]{Remark}

\newtheorem{notation}[theorem]{Notation}
\newtheorem{convention}[theorem]{Convention}

\theoremstyle{remark}

\renewcommand{\qedsymbol}{$\blacksquare$}

\numberwithin{equation}{theorem}

\def\Mod{\operatorname{\mathsf{Mod}}}
\def\mod{\operatorname{\mathsf{mod}}}

\newcommand{\CC}{{\mathcal C}}
\newcommand{\DD}{{\mathcal D}}

\newcommand{\UU}{{\mathcal U}}

\newcommand{\TT}{{\mathcal T}}

\newcommand{\XX}{{\mathcal X}}
\newcommand{\YY}{{\mathcal Y}}
\def\ZZ{\mathcal{Z}}

\newcommand{\Z}{\mathbb{Z}}

\newcommand{\add}{\operatorname{\mathsf{add}}}

\newcommand{\proj}{\operatorname{\rm proj}} 


\newcommand{\homo}[3]{{\rm{Hom}}_{#1}(#2, #3)} 
\newcommand{\ext}[4]{{\rm{Ext}}_{#1}^{#2}(#3, #4)} 
\newcommand{\End}{\operatorname{End}}
\newcommand{\gen}[3]{\langle #1{\rangle}_{#2}^{#3}}


\newcommand{\D}{\mathsf{D}}
\renewcommand{\AA}{{\mathcal A}}
\newcommand{\C}{\mathsf{C}}
\newcommand{\RHom}{\mathbf{R}\operatorname{Hom}\nolimits}
\def\Spec{\operatorname{Spec}}
\newcommand{\ctensor}{\operatorname{\widehat{\otimes}}} 
\newcommand{\tensor}{\operatorname{\otimes}}
\def\sCM{\operatorname{\underline{\mathsf{CM}}}}
\def\ess{\operatorname{\mathsf{ess}}}

\def\pd{\operatorname{pd}}
\def\Db{\mathsf{D}^{\mathrm{b}}}
\def\Dsg{\mathsf{D_{Sg}}}
\def\Dm{\mathsf{D}^{\mathrm{-}}}
\def\Dp{\mathsf{D}^{\mathrm{+}}}
\def\Da{\mathsf{D}^{\mathrm{\star}}}
\def\mdl{\operatorname{\mathsf{mod}}}
\def\p{\mathfrak{p}}
\def\q{\mathfrak{q}}
\def\m{\mathfrak{m}}
\def\n{\mathfrak{n}}
\def\rad{\operatorname{rad}}
\def\Min{\operatorname{Min}}
\def\sus{\mathsf{\Sigma}}
\def\gldim{\operatorname{gldim}}
\def\nil{\operatorname{nil}}
\def\LoL{\operatorname{\ell\ell}}
\def\codim{\operatorname{codim}}
\def\CI{\operatorname{CI}}
\def\Assh{\operatorname{Assh}}
\def\height{\operatorname{ht}}
\def\Max{\operatorname{Max}}
\def\Hom{\operatorname{Hom}}
\def\edim{\operatorname{edim}}
\def\Ext{\operatorname{Ext}}
\def\Tor{\operatorname{Tor}}
\def\cfrank{\operatorname{cf-rank}}

\makeatletter
\def\ddigit#1{\setbox0=\hbox{9}\setbox1=\hbox{#1}%
  \ifdim \wd0<\wd1 #1\else 0#1\fi}
\newcounter{hour}
\newcounter{HOUR}
\newcounter{minute}
\setcounter{hour}{\the\time} \divide \c@hour by 60
\setcounter{HOUR}{\thehour} \multiply \c@HOUR by 60
\setcounter{minute}{\the\time} \advance \c@minute by -\c@HOUR

\makeatother
\begin{document}
\setlength{\baselineskip}{15pt}
\title{Generators and dimensions of derived categories}
\author{Takuma Aihara}
\address{Division of Mathematical Science and Physics, Graduate School of Science and Technology, Chiba University, Yayoi-cho, Chiba 263-8522, Japan}
\email{taihara@math.s.chiba-u.ac.jp}
\author{Ryo Takahashi}
\address{Department of Mathematical Sciences, Faculty of Science, Shinshu University, 3-1-1 Asahi, Matsumoto, Nagano 390-8621, Japan}
\email{takahasi@math.shinshu-u.ac.jp}
\thanks{2010 {\em Mathematics Subject Classification.} Primary 13D09; Secondary 14F05, 16E35, 18E30}
\thanks{{\em Key words and phrases.} dimension of a triangulated category, derived category, singularity category, stable category of Cohen-Macaulay modules}
\thanks{The second author was partially supported by JSPS Grant-in-Aid for Young Scientists (B) 22740008}
\begin{abstract}
Several years ago, Bondal, Rouquier and Van den Bergh introduced the notion of the dimension of a triangulated category, and Rouquier proved that the bounded derived category of coherent sheaves on a separated scheme of finite type over a perfect field has finite dimension.
In this paper, we study the dimension of the bounded derived category of finitely generated modules over a commutative Noetherian ring.
The main result of this paper asserts that it is finite over a complete local ring containing a field with perfect residue field.
Our methods also give a ring-theoretic proof of the affine case of Rouquier's theorem.
\end{abstract}
\maketitle
\tableofcontents
\section{Introduction}

The notion of the dimension of a triangulated category has been introduced by Bondal, Rouquier and Van den Bergh \cite{BV,R2}.
Roughly speaking, it measures how quickly the category can be built from a single object.
The dimensions of the bounded derived category of finitely generated modules over a Noetherian ring and that of coherent sheaves on a Noetherian scheme are called the {\em derived dimensions} of the ring and the scheme, while the dimension of the singularity category (in the sense of Orlov \cite{Or1}; the same as the stable derived category in the sense of Buchweitz \cite{Bu}) is called the {\em stable dimension}.
These dimensions have been in the spotlight in the studies of the dimensions of triangulated categories.

The importance of the notion of derived dimension was first recognized by Bondal and Van den Bergh \cite{BV} in relation to representability of functors.
They proved that smooth proper commutative/non-commutative varieties have finite derived dimension, which yields that every contravariant cohomological functor of finite type to vector spaces is representable.
The notion of stable dimension is closely related to that of representation dimension.
This invariant has been defined by Auslander \cite{A} to measure how far a given Artin algebra is from being representation-finite, and widely and deeply investigated so far;  see \cite{B,EHIS,I,O3,X} for example.
Rouquier \cite{R1} gave the first example of an Artin algebra of representation dimension more than three, by making use of the stable dimension of the exterior algebra of a vector space.

For small values of derived and stable dimensions, a number of definitive results have been obtained.
Rings of derived or stable dimension zero have been classified.
It is shown by Chen, Ye and Zhang \cite{CYZ} that a finite-dimensional algebra over an algebraically closed field has derived dimension zero if and only if it is an iterated tilted algebra of Dynkin type.
Yoshiwaki \cite{Yw} showed that a finite-dimensional self-injective algebra over an algebraically closed field has stable dimension zero if and only if it is representation-finite.
Recently, Minamoto \cite{M} established that the dimension of the derived category of perfect DG modules over a DG algebra is stable under separable algebraic extensions of the base field, which extends Yoshiwaki's result to algebras over perfect fields.
On the other hand, some geometric objects are known to have derived dimension one.
Orlov \cite{Or2} proved that a smooth quasi-projective curve has derived dimension one, and independently and in a different approach, Oppermann \cite{O5} showed that so do an elliptic curve and a weighted projective line of tubular type.

For general values of derived and stable dimensions, several lower and upper bounds have been found.
Bergh, Iyengar, Krause and Oppermann \cite{BIKO} proved that the stable dimension of a complete local complete intersection is at least its codimension minus one,  which implies a result of Oppermann \cite{O1} that gives a lower bound for the stable dimension of the group algebra of an elementary abelian group.
The result of Bergh, Iyengar, Krause and Oppermann was extended to an arbitrary commutative local ring by Avramov and Iyengar \cite{AI} by using the conormal free rank of the ring.
Rouquier \cite{R2} proved that the derived dimension of a reduced separated scheme of finite type over a field is not less than the dimension of the scheme.

As to upper bounds, the derived dimension of a ring is at most its Loewy length \cite{R2}.
In particular, Artinian rings have finite derived dimension.
Christensen, Krause and Kussin \cite{C,KK} showed that the derived dimension is bounded above by the global dimension, whence rings of finite global dimension are of finite derived dimension.
In relation to a conjecture of Orlov \cite{Or2}, a series of studies by Ballard, Favero and Katzarkov \cite{BF,BFK1,BFK2} gave in several cases upper bounds for derived and stable dimensions of schemes.
For instance, they obtained an upper bound of the stable dimension of an isolated hypersurface singularity by using the Loewy length of the Tjurina algebra.
On the other hand, there are a lot of triangulated categories having infinite dimension.
The dimension of the derived category of perfect complexes over a ring (respectively, a quasi-projective scheme) is infinite unless the ring has finite global dimension (respectively, the scheme is regular) \cite{R2}.
It has turned out by work of Oppermann and \v{S}\v{t}ov\'{i}\v{c}ek \cite{OS} that over a Noetherian algebra (respectively, a projective scheme) all proper thick subcategories of the bounded derived category of finitely generated modules (respectively, coherent sheaves) containing perfect complexes have infinite dimension.

As a main result of the paper \cite{R2}, Rouquier gave the following theorem.
\begin{theorem}[Rouquier]
Let $X$ be a separated scheme of finite type over a perfect field.
Then the bounded derived category of coherent sheaves on $X$ has finite dimension.
\end{theorem}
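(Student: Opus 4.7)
The plan is to reduce the global statement to an affine one and then handle the affine case by Noetherian induction on Krull dimension, exploiting that over a perfect field the smooth locus of any finite-type $k$-algebra is dense.

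First I would establish a Mayer--Vietoris style estimate: for any covering $X = U \cup V$ by quasi-compact open subsets,
\[
\dim \Db(\mathrm{coh}\,X) \leq \dim \Db(\mathrm{coh}\,U) + \dim \Db(\mathrm{coh}\,V) + 1.
\]
The idea is to extend generators of $\Db(\mathrm{coh}\,U)$ and $\Db(\mathrm{coh}\,V)$ to compact objects on $X$ and use the Mayer--Vietoris triangle $F \to j_{U*}j_U^{*}F \oplus j_{V*}j_V^{*}F \to j_{W*}j_W^{*}F \to \Sigma F$ with $W = U \cap V$, which presents any $F \in \Db(\mathrm{coh}\,X)$ as a one-step cone built from pieces supported on $U$ and $V$. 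Iterating this across a finite affine cover (which exists since $X$ is separated of finite type) reduces the theorem to the affine case.

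For $X = \Spec A$ with $A$ finitely generated over a perfect field $k$, I would induct on $d = \dim A$. The case $d = 0$ is handled by the Loewy length bound mentioned in the introduction. For general $d$, generic smoothness---which uses perfectness of $k$ via the geometric reducedness of generic fibres---yields a nonzero $f \in A$ with $A_f$ smooth over $k$. The closed complement $V(f) \subset X$ has strictly smaller dimension, so by induction $\Db(\mod A/(f))$ has finite dimension, and this controls the subcategory of $\Db(\mod A)$ of complexes supported on $V(f)$. The localization triangle then realizes every object of $\Db(\mod A)$ as a one-step cone between such a supported complex and an object pulled back from $\Db(\mod A_f)$.

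The main obstacle is the smooth affine case: producing a single generator of $\Db(\mod A)$ with explicit generation time when $A/k$ is smooth of dimension $d$. The tool is a Koszul-type resolution of the diagonal: smoothness implies that $\Omega^1_{A/k}$ is projective of rank $d$ and that the kernel of the multiplication $\mu : A \otimes_k A \to A$ is, locally, generated by a regular sequence of length $d$, so $A$ is a perfect $A \otimes_k A$-module admitting a Koszul-type resolution of length $d$. Applying $(-)\otimes^{\mathbf{L}}_{A\otimes_k A} A$ to this resolution exhibits every $M \in \Db(\mod A)$ as a finite iterated cone of free $A$-modules, yielding $\dim \Db(\mod A) \leq d$. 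Combining this estimate with the Noetherian induction on the singular stratum and the Mayer--Vietoris reduction then gives global finiteness.
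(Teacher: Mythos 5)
The paper does not prove this theorem; it is quoted from Rouquier's paper \cite{R2}. What the paper does give is a self-contained, ring-theoretic proof of the \emph{affine} case (Theorem \ref{rqrr}), and this is the natural thing to compare your proposal against.

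Your sketch of the smooth affine case is the sound part: for $A$ smooth over $k$, the kernel of the multiplication $A\otimes_k A\to A$ is locally generated by a regular sequence of length $d$, giving a Koszul-type resolution of the diagonal, and tensoring any $M\in\Db(\mod A)$ against it exhibits $M$ inside $\gen{A}{d+1}{}$. This idea — a finite resolution of the diagonal over the enveloping algebra, made possible by separability — is exactly what drives the paper's Theorem \ref{main result}, where the Koszul complex in \eqref{gasaf} over $U=R\ctensor_kA$ plays this role.

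Two steps in your outline, however, have genuine gaps. First, the Mayer--Vietoris reduction does not work as stated: for an open immersion $j_U:U\hookrightarrow X$, the functor $Rj_{U*}$ does not preserve coherence, so the triangle $F\to j_{U*}j_U^*F\oplus j_{V*}j_V^*F\to j_{W*}j_W^*F\to\Sigma F$ does not live in $\Db(\mathrm{coh}\,X)$; passing from schemes to affines requires the Thomason--Trobaugh machinery that Rouquier develops, not a bare \u{C}ech triangle. Second, and more centrally, the claim that ``the localization triangle realizes every object of $\Db(\mod A)$ as a one-step cone between a complex supported on $V(f)$ and an object pulled back from $\Db(\mod A_f)$'' has no content as written: the localization $M\to M_f$ is not a morphism in $\Db(\mod A)$ (since $M_f$ is not finitely generated over $A$), there is no functor $\Db(\mod A_f)\to\Db(\mod A)$ to pull back along, and in general for a Verdier quotient $\SSS\to\TT\to\TT/\SSS$ the dimension of $\TT$ is \emph{not} controlled by those of $\SSS$ and $\TT/\SSS$. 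The whole difficulty is to replace this nonexistent triangle with a concrete one, and this is precisely what the paper's proof supplies.

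Indeed, the paper's route is different in detail from yours. Rather than generic smoothness plus a localization sequence, it first reduces via a prime filtration to the case of a domain, then chooses a Noether normalization $A=k[[x_1,\dots,x_d]]\subseteq R$ with $Q(R)/Q(A)$ finite separable (this is where perfectness of the residue field enters). It then works inside $\Db(\mod T)$ for $T=R\ctensor_kR$ (or $R\otimes_kR$ in the affine case), compares the Koszul resolution $C\simeq R\otimes^{\mathbf L}_AR$ with the multiplication $\mu:S=R\otimes_AR\to R$, and uses the separability of $Q(R)/Q(A)$ together with the Noetherianity of $T$ to produce an element $a\in A$ annihilating the connecting morphism $\delta$ (Claim \ref{ann}). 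The octahedral axiom then yields the explicit triangle
$$K\oplus R\ \to\ R\otimes^{\mathbf L}_AR\ \to\ R/(a)\ \to\ \sus(K\oplus R)$$
in $\Db(\mod T)$, whose terms are respectively a summand containing $R$, a bounded complex of free $R$-modules (from the Koszul resolution over $A$), and a complex over the lower-dimensional ring $R/(a)$. Tensoring with an arbitrary $X\in\Db(\mod R)$ then places $X$ inside $\gen{R\oplus R/\p_1\oplus\cdots\oplus R/\p_h}{d+1+m}{}$ by induction. This annihilator triangle is what does the work that your ``localization triangle'' cannot, and it is the real content you would need to fill in.
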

\noindent
Applying this theorem to an affine scheme, one obtains:
\begin{corollary}\label{rr}
Let $R$ be a commutative ring which is essentially of finite type over a perfect field $k$.
Then the bounded derived category $\Db(\mod R)$ of finitely generated $R$-modules has finite dimension, and so does the singularity category $\Dsg(R)$ of $R$.
\end{corollary}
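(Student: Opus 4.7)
The plan is to reduce the corollary to Rouquier's theorem via a localization argument. Since $R$ is essentially of finite type over $k$, write $R \cong W^{-1}S$ for some finitely generated $k$-algebra $S$ and multiplicatively closed subset $W \subset S$. The affine scheme $\Spec S$ is separated and of finite type over the perfect field $k$, so Rouquier's theorem yields $\dim \Db(\mod S) < \infty$. The remaining task is to transfer finiteness along the localization $S \to R$.

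The key technical point is that the exact functor $L := (-) \otimes_S R \colon \Db(\mod S) \to \Db(\mod R)$ is essentially surjective. I would verify this inductively on the length of a complex: given $M^\bullet \in \Db(\mod R)$ represented by a bounded complex of finitely generated $R$-modules, choose for each degree a finitely generated $S$-submodule $N^i \subset M^i$ with $W^{-1}N^i = M^i$, and enlarge the $N^{i+1}$ one degree at a time so as to absorb the image of $d^i|_{N^i}$ (this is possible because $N^i$ is finitely generated, so a single element of $W$ clears denominators on the images of its generators). The identity $d^{i+1}d^i = 0$ then holds automatically in $N^{i+2}$ since $N^{i+2}$ sits inside $M^{i+2}$, which is $W$-torsion free as an $S$-module. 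This yields a bounded $S$-complex $N^\bullet$ with $W^{-1}N^\bullet \simeq M^\bullet$, and is the main obstacle in the argument.

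Granted essential surjectivity, finiteness transfers at once: if $G \in \Db(\mod S)$ satisfies $\Db(\mod S) = \langle G \rangle_n$, then for any $X \in \Db(\mod R)$ one picks $Y \in \Db(\mod S)$ with $L(Y) \simeq X$; since $Y \in \langle G \rangle_n$ and $L$ is triangulated, $X = L(Y) \in \langle L(G) \rangle_n$. Hence $\dim \Db(\mod R) \le \dim \Db(\mod S) < \infty$. Finally, since $\Dsg(R)$ is the Verdier quotient of $\Db(\mod R)$ by the thick subcategory of perfect complexes, the quotient functor $\pi \colon \Db(\mod R) \to \Dsg(R)$ is triangulated and essentially surjective, so $\pi(L(G))$ generates $\Dsg(R)$ in at most $n$ steps, giving $\dim \Dsg(R) \le \dim \Db(\mod R) < \infty$, as required.
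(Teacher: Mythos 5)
Your argument is correct and follows essentially the same route as the paper: apply Rouquier's theorem to the finite type algebra $S$, transfer finiteness along the localization $S\to R$ using essential surjectivity of $(-)\otimes_S R\colon \Db(\mod S)\to\Db(\mod R)$, and then pass to $\Dsg(R)$ via the Verdier quotient functor. The localization step is Lemma \ref{ess dense of localization} in the paper (proved by choosing abstract finitely generated $S$-modules and clearing denominators in the differentials rather than via submodules as you do, but the idea is the same), and the quotient step is Lemma \ref{essentially dense}.
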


The main purpose of this paper is to study the dimension and generators of the bounded derived category of finitely generated modules over a commutative Noetherian ring.
We will give lower bounds of the dimensions over general rings under some mild assumptions, and over some special rings we will also give upper bounds and explicit generators.
The main result of this paper is the following theorem.
(See Definition \ref{dtcat} for the notation.)

\begin{maintheorem}
Let $R$ be either a complete local ring containing a field with perfect residue field or a ring that is essentially of finite type over a perfect field.
Then there exist a finite number of prime ideals $\p_1,\dots,\p_n$ of $R$ and an integer $m\ge 1$ such that
$$
\Db(\mod R)=\gen{R/\p_1\oplus\cdots\oplus R/\p_n}{m}{}.
$$
In particular, $\Db(\mdl R)$ and $\Dsg(R)$ have finite dimension.
\end{maintheorem}

In Rouquier's result stated above, the essential role is played, in the affine case, by the Noetherian property of the enveloping algebra $R\tensor_kR$.
The result does not apply to a complete local ring, since it is in general far from being (essentially) of finite type and therefore the enveloping algebra is non-Noetherian.
Our methods not only show finiteness of dimensions over a complete local ring but also give a ring-theoretic proof of Corollary \ref{rr}.

The organization of this paper is as follows.

In Section \ref{prelim}, our basic definitions will be stated, including the definition of the dimension of a triangulated category.

In Section \ref{upper}, we will study generators of derived categories of an abelian category.
Using invariants of rings, we will obtain explicit upper bounds for the derived dimensions of quotient singularities, residue rings by monomials (e.g. Stanley-Reisner rings) and rings of Krull dimension one (e.g. numerical semigroup rings).

In Section \ref{finiteness}, we shall prove the main theorem stated above.
We will also provide generators of larger derived categories.

In Section \ref{app}, our main theorem will be applied to the stable category of Cohen-Macaulay modules.
We will observe that it has finite dimension over a complete Gorenstein local ring and a (not necessarily complete) Gorenstein excellent local ring with an isolated singularity.

In Section \ref{lower}, we will extend a result of Rouquier concerning lower bounds for the derived dimensions of affine algebras.
A similar lower bound will also be given for a general commutative Noetherian ring.
Both are described by using Krull dimension.
Finally, we will put a note on lower bounds of stable dimension.

\section{Preliminaries}\label{prelim}

This section is devoted to stating our convention, giving some basic notation and recalling the definition of the dimension of a triangulated category.

We assume the following throughout this paper.

\begin{convention}
\begin{enumerate}[(1)]
\item
All subcategories are full and closed under isomorphisms.
\item
All rings are associative and with identities.
\item
A Noetherian ring, an Artinian ring and a module mean a right Noetherian ring, a right Artinian ring and a right module, respectively.
\item
All complexes are cochain complexes.
\end{enumerate}
\end{convention}

We use the following notation.

\begin{notation}
\begin{enumerate}[(1)]
\item
Let $\AA$ be an abelian category.
\begin{enumerate}[(a)]
\item
We denote by $\proj\AA$ the subcategory of $\AA$ consisting of all projective objects of $\AA$.
When $\AA$ has enough projective objects, the {\em projective dimension} $\pd_{\AA}M$ of an object $M\in\AA$ is defined as the infimum of the integers $n$ such that there exists an exact sequence
$$
0 \to P_n \to P_{n-1} \to \cdots \to P_1 \to P_0 \to M \to 0
$$
in $\AA$ with $P_0,\dots,P_n\in\proj\AA$.
The \emph{global dimension} $\gldim\AA$ of $\AA$ is by definition the supremum of the projective dimensions of objects of $\AA$. 
\item
For a subcategory $\XX$ of $\AA$, the smallest subcategory of $\AA$ containing $\XX$ which is closed under finite direct sums and direct summands is denoted by $\add_\AA\XX$.
\item
We denote by $\C(\AA)$ the category of complexes of objects of $\AA$.
The derived category of $\AA$ is denoted by $\D(\AA)$.
The left bounded, the right bounded and the bounded derived categories of $\AA$ are denoted by $\Dp(\AA), \Dm(\AA)$ and $\Db(\AA)$, respectively.
We set $\D^\varnothing(\AA)=\D(\AA)$, and often write $\Da(\AA)$ with $\star\in\{\varnothing,+,-,\mathrm{b}\}$ to mean $\D^\varnothing(\AA)$, $\Dp(\AA)$, $\Dm(\AA)$ and $\Db(\AA)$.
\end{enumerate}
\item
Let $R$ be a ring. 
We denote by $\Mod R$ and $\mod R$ the category of $R$-modules and the category of finitely generated $R$-modules, respectively. 
For a subcategory $\XX$ of $\mod R$ (when $R$ is Noetherian), we put $\add_R\XX=\add_{\mod R}\XX$. 
\item
\begin{enumerate}[(a)]
\item
Let $\CC$ be an additive category, and let $\XX$ be a subcategory of $\CC$.
We say that $\XX$ is {\em closed under existing direct sums} provided that for any family $\{ X_\lambda\}_{\lambda\in\Lambda}$ of objects in $\XX$ the direct sum $\bigoplus_{\lambda\in\Lambda}X_\lambda$ belongs to $\XX$ whenever it exists in $\CC$.
\item
Let $\TT$ be a triangulated category and $\XX$ a subcategory of $\TT$.
We denote by $\ess_{\TT}\XX$ the smallest subcategory of $\TT$ containing $\XX$ which is closed under shifts and existing direct sums.
\end{enumerate}
\end{enumerate}
\end{notation}

\begin{remark}\label{iin}
Let $\AA$ be an abelian category, and let $\XX$ be a subcategory of $\AA$. 
Then
\begin{enumerate}[(1)]
\item
The subcategory $\ess_{\D(\AA)}\XX$ consists of all complexes of the form
$$
\bigoplus_{i\in\Z}\sus^{-i}X^i=(\cdots \xrightarrow{0} X^{i-1} \xrightarrow{0} X^i \xrightarrow{0} X^{i+1} \xrightarrow{0} \cdots)
$$
with $X^i\in\XX$ for all $i\in\Z$.
\item
We have $\ess_{\Da(\AA)}\XX=\ess_{\D(\AA)}\XX\cap\Da(\AA)$ for each $\star\in\{+,-,\mathrm{b}\}$.
\end{enumerate}
\end{remark}

The concept of the dimension of a triangulated category has been introduced by Rouquier \cite{R2}.
Now we recall its definition.

\begin{definition}\label{dtcat}
Let $\TT$ be a triangulated category.
\begin{enumerate}[(1)]
\item
A triangulated subcategory of $\TT$ is called {\em thick} if it is closed under direct summands.
\item
Let $\XX, \YY$ be two subcategories of $\TT$.
We denote by $\XX*\YY$ the subcategory of $\TT$ consisting of all objects $M$ that admit exact triangles
$$
X\to M\to Y\to \sus X
$$
with $X\in \XX$ and $Y\in \YY$.
We denote by $\gen{\XX}{}{}$ the smallest subcategory of $\TT$ containing $\XX$ which is closed under finite direct sums, direct summands and shifts.
For a non-negative integer $n$, we define the subcategory $\gen{\XX}{n}{}$ of $\TT$ by
\[\gen{\XX}{n}{}=\begin{cases}
\ \{0\} & (n=0), \\
\ \gen{\XX}{}{} & (n=1), \\
\ \gen{\gen{\XX}{}{}*\gen{\XX}{n-1}{}}{}{} & (2\le n<\infty).
\end{cases}\] 
Put $\gen{\XX}{\infty}{}=\bigcup_{n\ge0}\gen{\XX}{n}{}$.
When the ground category $\TT$ should be specified, we write $\gen{\XX}{n}{\TT}$ instead of $\gen{\XX}{n}{}$.
For a ring $R$ and a subcategory $\XX$ of $\D(\Mod R)$, we put $\gen{\XX}{n}{R}=\gen{\XX}{n}{\D(\Mod R)}$.
\item
The \emph{dimension} of $\TT$, denoted by $\dim\TT$, is the infimum of the integers $d$ such that there exists an object $M\in\TT$ with $\gen{M}{d+1}{}=\TT$.
\end{enumerate}
\end{definition}

\begin{remark}\label{hee}
\begin{enumerate}[(1)]
\item
Let $\TT$ be a triangulated category.
\begin{enumerate}[(a)]
\item
The subcategory $\gen{\XX}{\infty}{}$ of $\TT$ is nothing but the smallest thick subcategory of $\TT$ containing $\XX$.
\item
The octahedral axiom implies that $*$ satisfies associativity: $(\XX*\YY)*\ZZ=\XX*(\YY*\ZZ)$ for all subcategories $\XX,\YY,\ZZ$ of $\TT$.
\item
Let $\UU$ be a thick subcategory of $\TT$.
Then $\gen{\XX}{n}{\UU}=\gen{\XX}{n}{\TT}$ for any subcategory $\XX$ of $\UU$ and any integer $n\ge0$.
\item
The dimension of $\TT$ can be infinite: $\dim\TT=\infty$ if and only if $\gen{M}{n}{}\ne\TT$ for every object $M\in\TT$ and every integer $n\ge0$.
\end{enumerate}
\item
Let $\AA$ be an abelian category, and let $\XX$ be a subcategory of $\AA$ that is closed under existing direct sums.
Then $\gen{\ess_{\Db(\AA)}\XX}{n}{\Db(\AA)}=\gen{\XX}{n}{\Db(\AA)}$ for $n\ge0$.
\end{enumerate}
\end{remark}

\section{Upper bounds}\label{upper}

The aim of this section is to find explicit generators and upper bounds of dimensions for derived categories in several cases.
Our first goal is to do this in the case of finite global dimension.
It requires us to make the following easy observation, which will also be used in a later section.

\begin{lemma}\label{split out}
Let $\AA$ be an abelian category.
Let $X$ be a complex of objects of $\AA$.
Suppose that the homology ${\rm H}^iX$ is a projective object of $\AA$ for every integer $i$.
Then there exists a quasi-isomorphism $\bigoplus_{i\in\Z}\sus^{-i}{\rm H}^iX\to X$ in $\C(\AA)$.
Hence one has an isomorphism $X\cong\bigoplus_{i\in\Z}\sus^{-i}{\rm H}^iX$ in $\D(\AA)$. 
\end{lemma}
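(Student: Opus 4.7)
The plan is to construct the desired quasi-isomorphism degree by degree, using the splittings induced by projectivity of the homology objects. For each $i\in\Z$, let $Z^i=\ker d^i_X$ and $B^i=\operatorname{im} d^{i-1}_X$, so that there is a short exact sequence
\[
0\to B^i\to Z^i\to \mathrm{H}^iX\to 0
\]
in $\AA$. Since $\mathrm{H}^iX$ is projective by hypothesis, this sequence splits; I would fix a section $s^i\colon \mathrm{H}^iX\to Z^i$ for each $i$ and then compose with the canonical inclusion $\iota^i\colon Z^i\hookrightarrow X^i$ to produce a morphism $f^i:=\iota^i s^i\colon \mathrm{H}^iX\to X^i$.

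Next I would assemble the family $\{f^i\}$ into a chain map $f\colon \bigoplus_{i\in\Z}\sus^{-i}\mathrm{H}^iX\to X$. By Remark \ref{iin}(1), the source is the complex with $\mathrm{H}^iX$ placed in degree $i$ and all differentials equal to zero, so the chain-map identity $d^i_X\circ f^i=f^{i+1}\circ 0=0$ is automatic because $f^i$ factors through $Z^i=\ker d^i_X$. Thus $f$ is a genuine morphism in $\C(\AA)$.

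To see that $f$ is a quasi-isomorphism, I would compute the induced map $\mathrm{H}^i(f)\colon \mathrm{H}^iX\to\mathrm{H}^iX$ and observe that it agrees with the composite of $s^i$ and the canonical quotient $Z^i\twoheadrightarrow Z^i/B^i=\mathrm{H}^iX$. This composite is the identity by the very choice of $s^i$, so $\mathrm{H}^i(f)$ is an isomorphism for every $i$, i.e.\ $f$ is a quasi-isomorphism. Passing to the derived category yields the asserted isomorphism $X\cong\bigoplus_{i\in\Z}\sus^{-i}\mathrm{H}^iX$ in $\D(\AA)$.

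There is no serious obstacle in this argument; the only point to be careful about is that the infinite direct sum in the source should be read in the sense of Remark \ref{iin}(1), as a single complex with zero differentials and $\mathrm{H}^iX$ in degree $i$, so that the construction does not impose any assumption on $\AA$ admitting infinite coproducts.
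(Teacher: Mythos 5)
Your argument is correct and is essentially the same as the paper's: both split the surjection $Z^iX\twoheadrightarrow \mathrm{H}^iX$ using projectivity, compose the section with the inclusion $Z^iX\hookrightarrow X^i$ to build a chain map from the complex with zero differentials, and observe that it is a quasi-isomorphism. You merely spell out the two verifications (that the maps commute with differentials, and that the section induces the identity on homology) which the paper labels "evident."
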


\begin{proof}
Fix an integer $i$. 
Since ${\rm H}^iX$ is projective, the natural surjection ${\rm Z}^iX\to {\rm H}^iX$ is a split epimorphism, and there is a split monomorphism $f^i:{\rm H}^iX\to {\rm Z}^iX$.
Let $g^i:\mathrm{H}^i{X}\to X^i$ be the composition of $f^i$ and the inclusion map $\mathrm{Z}^iX\to X^i$.
We have a morphism
$$
\begin{CD}
\bigoplus_{i\in\Z}\sus^{-i}{\rm H}^iX @. = (\cdots @>0>> {\rm H}^{i-1}X @>0>> {\rm H}^iX @>0>> {\rm H}^{i+1}X @>0>> \cdots) \\
@V{g}VV @. @V{g^{i-1}}VV @V{g^i}VV @V{g^{i-1}}VV \\
X @. = (\cdots @>>> X^{i-1} @>>> X^i @>>> X^{i+1} @>>> \cdots)
\end{CD}
$$
in $\C(\AA)$.
It is evident that $g$ is a quasi-isomorphism.
\end{proof}

The first goal in this section is to extend a result of Krause and Kussin \cite[Lemma 2.5]{KK} on the bounded derived category of an abelian category to larger derived categories.

\begin{proposition}\label{KKp}
Let $\AA$ be an abelian category with enough projective objects, and let $n\geq0$ be an integer. 
Then for each $\star\in\{\varnothing,+,-,\mathrm{b}\}$ there is an inclusion 
$$
\{\,X\in\Da(\AA)\mid\pd_{\AA}{\rm H}^iX\le n\text{ for all }i\,\}\subseteq\gen{\ess_{\Da(\AA)}(\proj\AA)}{n+1}{\Da(\AA)}.
$$
In particular, every object $X\in\Db(\AA)$ with $\pd_{\AA}{\rm H}^iX\le n$ for all $i$ belongs to $\gen{\proj\AA}{n+1}{\Db(\AA)}$.
\end{proposition}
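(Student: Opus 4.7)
The plan is to proceed by induction on $n$. For the base case $n=0$, every cohomology $H^iX$ is projective, so Lemma~\ref{split out} yields an isomorphism $X\cong\bigoplus_{i\in\Z}\sus^{-i}H^iX$ in $\D(\AA)$. By Remark~\ref{iin}(1) this direct sum lies in $\ess_{\D(\AA)}(\proj\AA)$, and since $X\in\Da(\AA)$ the sum inherits the boundedness type $\star$; Remark~\ref{iin}(2) then places $X$ in $\ess_{\Da(\AA)}(\proj\AA)\subseteq\gen{\ess_{\Da(\AA)}(\proj\AA)}{1}{\Da(\AA)}$.

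For the inductive step ($n\ge1$), given $X\in\Da(\AA)$ with $\pd_\AA H^iX\le n$ for all $i$, I would choose for each $i$ a surjection $P^i\twoheadrightarrow H^iX$ with $P^i\in\proj\AA$, setting $P^i=0$ whenever $H^iX=0$ so that the boundedness type $\star$ is preserved by the sum. Using projectivity of $P^i$ one lifts through $\mathrm{Z}^iX\twoheadrightarrow H^iX$ and composes with the inclusion $\mathrm{Z}^iX\hookrightarrow X^i$ to obtain a morphism $g^i:P^i\to X^i$ whose image lies in $\ker d_X^i$; these assemble into a chain map $g:P\to X$ in $\C(\AA)$, where $P:=\bigoplus_{i\in\Z}\sus^{-i}P^i$ carries zero differentials. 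Letting $Y$ be the mapping cone of $g$, one obtains an exact triangle $P\to X\to Y\to\sus P$ in $\Da(\AA)$.

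The key homological observation is that the long exact cohomology sequence, together with the surjectivity of each $H^iP=P^i\twoheadrightarrow H^iX$, forces $H^iY$ to be isomorphic to the kernel of the chosen surjection $P^{i+1}\twoheadrightarrow H^{i+1}X$. This kernel is a first syzygy of $H^{i+1}X$ and hence has projective dimension at most $n-1$. The inductive hypothesis then gives $Y\in\gen{\ess_{\Da(\AA)}(\proj\AA)}{n}{\Da(\AA)}$, and trivially $P\in\gen{\ess_{\Da(\AA)}(\proj\AA)}{}{}$; the triangle therefore places $X$ in $\gen{\ess_{\Da(\AA)}(\proj\AA)}{}{}*\gen{\ess_{\Da(\AA)}(\proj\AA)}{n}{}\subseteq\gen{\ess_{\Da(\AA)}(\proj\AA)}{n+1}{\Da(\AA)}$, as required.

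The concluding assertion about $\Db(\AA)$ follows at once from Remark~\ref{hee}(2), since $\proj\AA$ is closed under existing direct sums in $\AA$. I expect the main technical subtlety to be not the induction itself but the bookkeeping needed to ensure that $P$ and $Y$ remain in $\Da(\AA)$ uniformly across $\star\in\{\varnothing,+,-,\mathrm{b}\}$; the choice $P^i=0$ whenever $H^iX=0$ is the uniform device that settles this for all $\star$ at once.
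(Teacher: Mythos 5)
Your proof is correct and follows essentially the same strategy as the paper: construct a morphism from a split complex of shifted projectives to $X$ that is surjective on cohomology, observe that the cone's cohomology has projective dimension reduced by one, and iterate. The one genuine simplification concerns how $P$ and the morphism $g$ are produced. You take $P=\bigoplus_{i\in\Z}\sus^{-i}P^i$ outright, with zero differentials, and lift each chosen epimorphism $P^i\twoheadrightarrow\mathrm{H}^iX$ through $\mathrm{Z}^iX$ using projectivity of $P^i$; the chain-map condition holds automatically because $g^i$ lands in $\mathrm{Z}^iX=\ker d^i_X$, and $\mathrm{H}^ig$ is the chosen epimorphism. The paper instead builds a complex of projectives with nontrivial differentials (a levelwise projective cover of $X$ assembled from covers of $\mathrm{B}^iX$, $\mathrm{Z}^iX$, $\mathrm{H}^iX$ and $X^i$) and then invokes Lemma~\ref{split out} to identify it in $\D(\AA)$ with that same direct sum. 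Your version avoids this scaffolding and needs Lemma~\ref{split out} only in the base case $n=0$, so it is somewhat leaner, but the mechanism and the resulting triangles are identical. Your bookkeeping device of setting $P^i=0$ whenever $\mathrm{H}^iX=0$ correctly keeps $P$ and the cone $Y$ inside $\Da(\AA)$ for all four values of $\star$, and the reduction of the final assertion to Remark~\ref{hee}(2) matches the paper.
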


Krause and Kussin proved the last assertion of the proposition, assuming that each boundary $\mathrm{B}^iX$ also has projective dimension at most $n$.
We do not need this assumption, while the proof is similar to theirs.

\begin{proof}
The last assertion follows from the first one and Remark \ref{hee}(2).

Let $X\in\Da(\AA)$ with $\pd_\AA\mathrm{H}^iX\le n$ for all $i$.
Fix an integer $i$.
We have a commutative diagram
\[\begin{CD}
0 @>>> P_{{\rm B}^iX} @>>> P_{{\rm Z}^iX} @>>> P_{{\rm H}^iX} @>>> 0 \\
@. @VVV @VVV @V{g^i}VV \\
0 @>>> {\rm B}^iX @>>> {\rm Z}^iX @>>> {\rm H}^iX @>>> 0
\end{CD}\]
in $\AA$ with exact rows and surjective columns such that $P_{{\rm B}^iX},P_{{\rm H}^iX}$ are projective, that the kernel of $g^i$ has projective dimension at most $n-1$ and that $P_{{\rm Z}^iX}=P_{{\rm B}^iX}\oplus P_{{\rm H}^iX}$.
Similarly, there is a commutative diagram
$$
\begin{CD}
0 @>>> P_{{\rm Z}^iX} @>>> P_{X^i} @>>> P_{{\rm B}^{i+1}X} @>>> 0 \\
@. @VVV @V{f^i}VV @VVV \\
0 @>>> {\rm Z}^iX @>>> X^i @>>> {\rm B}^{i+1}X @>>> 0
\end{CD}
$$
in $\AA$ with exact rows and surjective columns, where $P_{X^i}=P_{{\rm Z}^iX}\oplus P_{{\rm B}^{i+1}X}$.
Put $P^i=P_{X^i}$ and let $\partial^i$ be the composite map
$$
P^{i}=P_{X^i}\to P_{{\rm B}^{i+1}X}\to P_{{\rm Z}^{i+1}X}\to P_{X^{i+1}}=P^{i+1}.
$$
Then we have $\partial^{i+1}\partial^i=0$, and get a complex $P=(\cdots\xrightarrow{\partial^{i-1}} P^i\xrightarrow{\partial^i} P^{i+1}\xrightarrow{\partial^{i+1}}\cdots)$.
Note that $P$ is in $\Da(\AA)$.
It is easy to see that the maps $f^i:P^i\to X^i$ form a morphism $f:P\to X$ in $\C(\AA)$.
For any $i\in\Z$ we have ${\rm H}^iP=P_{{\rm H}^iX}$, which is projective.
By Lemma \ref{split out}, $P$ is isomorphic to $\bigoplus_{i\in\Z}\sus^{-i}P_{{\rm H}^iX}$ in $\D(\AA)$, which implies that $P$ belongs to $\gen{\ess_{\Da(\AA)}(\proj\AA)}{}{}$; see Remark \ref{iin}.
Now we take an exact triangle
$$
P\xrightarrow{f}X\to X_1\to\sus P
$$
in $\D(\AA)$.
Note that for each $i$ the map ${\rm H}^if$ coincides with $g^i$, which is surjective. 
We have an exact sequence
$$
\cdots\to{\rm H}^iP\xrightarrow{{\rm H}^if} {\rm H}^iX\xrightarrow{0}{\rm H}^iX_1\to {\rm H}^{i+1}P\xrightarrow{{\rm H}^{i+1}f} {\rm H}^{i+1}X\xrightarrow{0}\cdots,
$$
which gives rise to a short exact sequence
$$
0 \to \mathrm{H}^iX_1 \to P_{\mathrm{H}^{i+1}X} \xrightarrow{g^{i+1}} \mathrm{H}^{i+1}X \to 0.
$$
Hence ${\rm H}^iX_1$ has projective dimension at most $n-1$ for all $i\in\Z$. 
Put $P_0=P$, $X_0=X$ and $f_0=f$. 
Inductively, we obtain a diagram
$$
\begin{CD}
P_0 @. P_1 @. P_2 @. \cdots @. P_n \\
@V{f_0}VV @V{f_1}VV @V{f_2}VV @. @| \\
X_0 @>>> X_1 @>>> X_2 @>>> \cdots @>>> X_n.
\end{CD}
$$
in $\D(\AA)$ with $P_j\in \gen{\ess_{\Da(\AA)}(\proj\AA)}{}{}$ such that there is an exact triangle $P_j\xrightarrow{f_j} X_j\to X_{j+1}\to \sus P_j$ for $0\leq j\leq n$. 
Thus $X$ belongs to $\gen{\ess_{\Da(\AA)}(\proj\AA)}{n+1}{}$.
\end{proof}

The following is immediate from Proposition \ref{KKp}.
The third assertion is a categorical version of \cite[Proposition 2.6]{KK}.

\begin{corollary}\label{krskssn}
Let $\AA$ be an abelian category with enough projective objects and of finite global dimension.
\begin{enumerate}[\rm(1)]
\item
For each $\star\in\{\varnothing,+,-\}$ one has $\Da(\AA)=\gen{\ess_{\Da(\AA)}(\proj\AA)}{\gldim\AA+1}{}$.
\item
It holds that $\Db(\AA)=\gen{\proj\AA}{\gldim\AA+1}{}$.
\item
If $\proj\AA=\add_\AA G$ for some $G\in\AA$, then $\dim\Db(\AA)\le\gldim\AA$.
\end{enumerate}
\end{corollary}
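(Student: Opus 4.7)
The plan is to deduce all three assertions from Proposition \ref{KKp} by observing that the global-dimension hypothesis universally bounds $\pd_\AA \mathrm{H}^i X$.

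First I would handle (1) and (2) uniformly. Set $n=\gldim\AA$, which is finite by hypothesis. For any complex $X\in\Da(\AA)$ every homology object $\mathrm{H}^i X$ lies in $\AA$, and so $\pd_\AA \mathrm{H}^i X\le n$ automatically. The hypotheses of Proposition \ref{KKp} are therefore met for every $X$, giving
$$
\Da(\AA)\subseteq \gen{\ess_{\Da(\AA)}(\proj\AA)}{n+1}{\Da(\AA)},
$$
and the reverse inclusion is trivial since $\proj\AA\subseteq\Da(\AA)$ and $\Da(\AA)$ is closed under shifts, finite direct sums, direct summands and (in the relevant unbounded cases) the existing direct sums that appear in $\ess$. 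For $\star=\mathrm{b}$ the term $\ess_{\Db(\AA)}(\proj\AA)$ collapses: in a bounded complex only finitely many slots are nonzero, so every object of $\ess_{\Db(\AA)}(\proj\AA)$ is a finite direct sum of shifts of projectives and therefore already lies in $\gen{\proj\AA}{1}{\Db(\AA)}$. This yields (2). The same argument proves (1) once one records that $\ess_{\Da(\AA)}(\proj\AA)$ is built out of shifts and existing direct sums of projectives, so the containment above is the desired equality.

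For (3), suppose $\proj\AA=\add_\AA G$. Any finite direct sum of objects from $\proj\AA$ is then a direct summand of some $G^{\oplus k}$, so $\gen{\proj\AA}{1}{\Db(\AA)}=\gen{G}{1}{\Db(\AA)}$. Combined with (2) this gives
$$
\Db(\AA)=\gen{\proj\AA}{n+1}{\Db(\AA)}=\gen{G}{n+1}{\Db(\AA)},
$$
whence $\dim\Db(\AA)\le n=\gldim\AA$ by Definition \ref{dtcat}(3).

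No step is really difficult here; the only mild subtlety to watch is the distinction, in part (1), between the unbounded variants $\Da(\AA)$, where one genuinely needs the infinite direct sums appearing in $\ess_{\Da(\AA)}(\proj\AA)$ to collect the projective resolutions of homology spread over all degrees, and the bounded case in (2), where those infinite direct sums degenerate to finite ones. Everything else is a direct specialisation of Proposition \ref{KKp} together with the definition of $\gen{-}{-}{-}$.
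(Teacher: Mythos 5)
Your proof is correct and follows exactly the intended route: since $\gldim\AA<\infty$ bounds $\pd_\AA\mathrm{H}^iX$ for every complex, Proposition \ref{KKp} applies verbatim, and the observation that $\ess_{\Db(\AA)}(\proj\AA)$ degenerates to finite direct sums of shifts (which the paper packages as Remark \ref{hee}(2)) reduces (2) and (3) to the bounded case. This matches the paper, which simply declares the corollary ``immediate from Proposition \ref{KKp}.''
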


\begin{remark}
Under the assumption of the third assertion in the above corollary, $\AA$ is equivalent to $\mod\End_\AA(G)$. 
Indeed, the proof of \cite[Proposition 3.4]{KK} shows that $\End_\AA(G)$ is right coherent, that is, $\mod\End_\AA(G)$ is an abelian category.
The exact functor $\Hom_\AA(G, -):\AA\to \mod\End_\AA(G)$ is an equivalence.
Here we do not need the assumption that $\AA$ has finite global dimension. 
\end{remark}

Next we will apply Corollary \ref{krskssn} to module categories of Noetherian rings.
We say that an exact functor $F:\TT\to \TT'$ of triangulated categories is \emph{essentially dense} if for every object $X'\in\TT'$ there exists an object $X\in\TT$ such that $X'$ is isomorphic to a direct summand of $FX$.
We will need the lemma below, which says that an essentially dense functor does not increase the dimension.

\begin{lemma}\label{essentially dense}
Let $F:\TT\to \TT'$ be an essentially dense exact functor of triangulated categories.
Let $\XX$ be a subcategory of $\TT$ and $n\ge0$ an integer. 
If $\TT=\gen{\XX}{n}{}$ holds, then $\TT'=\gen{F\XX}{n}{}$ holds. 
In particular, there is an inequality $\dim\TT'\leq\dim\TT$.
\end{lemma}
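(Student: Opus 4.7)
The plan is to prove the statement by first establishing, by induction on $n$, the auxiliary inclusion $F(\gen{\XX}{n}{\TT})\subseteq\gen{F\XX}{n}{\TT'}$ for any exact functor $F$ whatsoever (not using essential density), and then to invoke the essentially dense hypothesis only at the end to promote this to an equality on all of $\TT'$.

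For the auxiliary claim, the case $n=0$ is trivial since $F(0)=0$. For $n=1$, recall that $\gen{F\XX}{1}{\TT'}=\gen{F\XX}{}{\TT'}$ is closed under finite direct sums, direct summands, and shifts. Since $F$ is additive and commutes with $\sus$, and since a summand of $FA$ for $A\in\gen{\XX}{}{\TT}$ is also a summand of $FA\in\gen{F\XX}{}{\TT'}$, an easy structural induction on how an object of $\gen{\XX}{}{\TT}$ is built shows $F(\gen{\XX}{}{\TT})\subseteq\gen{F\XX}{}{\TT'}$. For the inductive step $n\ge 2$, by definition $\gen{\XX}{n}{\TT}$ is the closure under shifts, finite sums, and summands of $\gen{\XX}{}{\TT}*\gen{\XX}{n-1}{\TT}$. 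Since $F$ sends exact triangles to exact triangles, it maps this $*$-product into $F\gen{\XX}{}{\TT}*F\gen{\XX}{n-1}{\TT}$, which by the inductive hypothesis is contained in $\gen{F\XX}{}{\TT'}*\gen{F\XX}{n-1}{\TT'}$. Applying the $n=1$ argument to the outer closure then gives $F(\gen{\XX}{n}{\TT})\subseteq\gen{F\XX}{n}{\TT'}$, completing the induction.

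Now for the stated assertion, suppose $\TT=\gen{\XX}{n}{\TT}$ and let $X'\in\TT'$. Essential density yields $X\in\TT$ with $X'$ a direct summand of $FX$. By the auxiliary claim, $FX\in\gen{F\XX}{n}{\TT'}$, and since $\gen{F\XX}{n}{\TT'}$ is closed under direct summands, we conclude $X'\in\gen{F\XX}{n}{\TT'}$. Hence $\TT'=\gen{F\XX}{n}{\TT'}$. The dimension inequality follows by applying this with $\XX=\{M\}$ for a generator $M$ realizing $\gen{M}{\dim\TT+1}{\TT}=\TT$.

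I don't anticipate a serious obstacle; the content is formal. The one point to handle carefully is the closure under summands at each stage: the inductive passage requires that $\gen{F\XX}{n}{\TT'}$ absorb summands, which is built into the definition but must be used explicitly both when bringing summands of objects in $\gen{F\XX}{}{}*\gen{F\XX}{n-1}{}$ inside $\gen{F\XX}{n}{}$ and at the final step where $X'$ is only known to be a summand of $FX$.
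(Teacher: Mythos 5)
Your proof is correct and follows the same strategy as the paper's: take an arbitrary object of $\TT'$, realize it as a direct summand of $FT$ for some $T\in\TT=\gen{\XX}{n}{}$, and use exactness of $F$ to place $FT$ (and hence its summand) in $\gen{F\XX}{n}{}$. The only difference is that the paper asserts the inclusion $F(\gen{\XX}{n}{\TT})\subseteq\gen{F\XX}{n}{\TT'}$ without comment as an immediate consequence of exactness, whereas you spell out the underlying induction on $n$ explicitly; this is the same argument with the routine step made visible.
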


\begin{proof}
Take an object $T'\in\TT'$.
Since $F$ is essentially dense, we can choose an object $T\in\TT$ such that $T'$ is isomorphic to a direct summand of $FT$. 
The object $T$ belongs to $\gen{\XX}{n}{}$.
Since $F$ is exact, $FT$ is in $\gen{F\XX}{n}{}$, and so is $T'$.
Hence $\TT'=\gen{F\XX}{n}{}$.
\end{proof}

For rings $R$ such that there exists an $R$-algebra $S$ of finite global dimension containing $R$ as a direct summand, one can give generators of the derived categories.

\begin{proposition}\label{dirsmd}
Let $\phi:R\to S$ be a ring homomorphism.
Assume that $\phi$ is a split monomorphism of $(R,R)$-bimodules and that $S$ has global dimension $n$.
Then the following hold.
\begin{enumerate}[\rm (1)]
\item
$\Dm(\Mod R)=\gen{\ess_{\Dm(\Mod R)}S}{n+1}{}$.
\item
If $R,S$ are right Noetherian and $S$ is finitely generated as a right $R$-module, then:
\begin{enumerate}[\rm (a)]
\item
$\Dm(\mdl R)=\gen{\ess_{\Dm(\mod R)}S}{n+1}{}$.
\item
$\Db(\mdl R)=\gen{S}{n+1}{}$.
In particular, the inequality $\dim\Db(\mdl R)\le n=\gldim S$ holds.
\end{enumerate}
\end{enumerate}
\end{proposition}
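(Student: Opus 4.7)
The plan is to reduce each statement to Corollary \ref{krskssn} applied over $S$ and transport the result back to the $R$-side via the restriction-of-scalars functor $F$ together with Lemma \ref{essentially dense}. The assumption that $\phi$ is a split monomorphism of $(R,R)$-bimodules supplies a bimodule retraction $\pi\colon S\to R$, giving a bimodule decomposition $S\cong R\oplus N$. For any complex $X$ of right $R$-modules, the termwise tensor product $X\otimes_R S$, carrying its natural right $S$-module structure, decomposes upon restriction to $R$ as $X\oplus (X\otimes_R N)$, and this is a splitting of complexes of right $R$-modules. In particular, $X$ is a direct summand of the restriction of $X\otimes_R S$, both in $\C(\Mod R)$ and in any derived category containing it; this is the mechanism that will force $F$ to be essentially dense.

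For part (1), I would start with $X\in\Dm(\Mod R)$ and fix a quasi-isomorphism $P\to X$ where $P$ is a bounded above complex of projective $R$-modules. Then $P\otimes_R S$ is a bounded above complex of projective right $S$-modules and so lies in $\Dm(\Mod S)$, while its restriction to $R$ contains $P\simeq X$ as a direct summand by the paragraph above. Hence $F\colon\Dm(\Mod S)\to\Dm(\Mod R)$ is essentially dense. Corollary \ref{krskssn}(1) gives $\Dm(\Mod S)=\gen{\ess_{\Dm(\Mod S)}(\proj(\Mod S))}{n+1}{}$, and Lemma \ref{essentially dense} then yields $\Dm(\Mod R)=\gen{F(\ess_{\Dm(\Mod S)}(\proj(\Mod S)))}{n+1}{}$. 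A typical object in the inner generating set has the form $\bigoplus_i\sus^{-i}Q^i$ with $Q^i$ a projective right $S$-module; writing each $Q^i$ as a summand of a free module $S^{(I_i)}$ and using that $F$ commutes with shifts and direct sums, its restriction becomes a summand of $\bigoplus_i\sus^{-i}S^{(I_i)}\in\ess_{\Dm(\Mod R)}S$, hence lies in $\gen{\ess_{\Dm(\Mod R)}S}{1}{}$. Part (2)(a) is proved identically inside $\mdl$, since the Noetherian and module-finiteness hypotheses on $S$ ensure that $-\otimes_R S$ carries bounded above complexes of finitely generated projective $R$-modules to bounded above complexes of finitely generated projective right $S$-modules.

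The main obstacle is part (2)(b), where a direct imitation can fail: if one derived-tensors a bounded representative of $X\in\Db(\mdl R)$ with $S$, the result may have unbounded cohomology over $S$, since $S$ is not assumed to have finite flat dimension as a left $R$-module. The fix is to use the \emph{underived} tensor product on a bounded representative $X^\bullet$ of $X$ by finitely generated $R$-modules: as $S$ is finitely generated as a right $R$-module, $X^\bullet\otimes_R S$ is a bounded complex of finitely generated right $S$-modules, hence lies in $\Db(\mdl S)$, and the bimodule splitting still exhibits $X^\bullet$ as a direct summand of its restriction in $\C(\mdl R)$. Thus $F\colon\Db(\mdl S)\to\Db(\mdl R)$ is essentially dense. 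Every finitely generated projective $S$-module is a summand of some $S^k$, so Corollary \ref{krskssn}(2) yields $\Db(\mdl S)=\gen{S}{n+1}{}$, and Lemma \ref{essentially dense} then delivers $\Db(\mdl R)=\gen{FS}{n+1}{}=\gen{S}{n+1}{}$, whence $\dim\Db(\mdl R)\le n=\gldim S$.
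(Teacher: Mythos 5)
Your proof is correct and follows essentially the same route as the paper's: reduce to Corollary \ref{krskssn} over $S$, transport back via the exact functor $-\otimes^{\bf L}_S S_R$ (which you describe equivalently as restriction of scalars), and use the bimodule splitting $S\cong R\oplus N$ to see that $X$ is a summand of the image, giving essential density. Your explicit remark in (2)(b) that the \emph{derived} tensor $X\otimes^{\bf L}_R S$ need not be bounded, and that one must instead tensor a bounded representative underived, is exactly the point the paper handles by passing from $X$ to the bounded $S$-complex $X\otimes_R S$ and only then applying $(-)\otimes^{\bf L}_S S_R$; you have simply made that maneuver more explicit.
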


\begin{proof}
(1) Corollary \ref{krskssn}(1) yields $\Dm(\Mod S)=\gen{\ess_{\Dm(\Mod S)}(\proj(\Mod S))}{n+1}{\Dm(\Mod S)}=\gen{\ess_{\Dm(\Mod S)}S}{n+1}{S}$. 
Since $\phi$ is a split monomorphism of $(R,R)$-bimodules, every object $X\in\Dm(\Mod R)$ is isomorphic to a direct summand of $X\tensor_R^{\bf L}S_R=(X\tensor_R^{\bf L}S)\tensor_S^{\bf L}S_R$.
Hence the functor $-\tensor_S^{\bf L}S_R:\Dm(\Mod S)\to \Dm(\Mod R)$ is essentially dense. 
By Lemma \ref{essentially dense}, we obtain $\Dm(\Mod R)=\gen{\ess_{\Dm(\Mod R)}S}{n+1}{R}$. 

(2) As $R$ and $S$ are right Noetherian, $\mod R$ and $\mod S$ are abelian categories with enough projective objects. 
Since $S$ is a finitely generated right $R$-module, one has an exact functor $-\tensor^{\bf L}_SS_R:\Dm(\mod S)\to \Dm(\mod R)$, which is essentially dense by the proof of (1). 
Let $X$ be an object in $\Db(\mod R)$.
Since $R$ is isomorphic to a direct summand of $S$ as an $(R, R)$-bimodule, $X$ is isomorphic to a direct summand of $X\tensor_RS$ as a bounded complex of right $R$-modules.
We can naturally regard $X\tensor_RS$ as a bounded complex of right $S$-modules. 
We have isomorphisms $(X\tensor_RS)\tensor^{\bf L}_{S}S_R \cong (X\tensor_RS)\tensor_SS_R\cong X\tensor_RS_R$ in $\Db(\mod R)$.
Thus $-\tensor^{\bf L}_{S}S_R:\Db(\mod S)\to \Db(\mod R)$ is an essentially dense functor.
Now the assertions (a) and (b) follow from Corollary \ref{krskssn}(1)(2) and Lemma \ref{essentially dense}.
\end{proof}

Applying the above proposition, we observe that the dimensions of the bounded derived categories of finitely generated modules over quotient singularities are at most their (Krull) dimensions, particularly that they are finite.

\begin{corollary}
Let $S$ be either the polynomial ring $k[x_1,\dots,x_n]$ or the formal power series ring $k[[x_1,\dots,x_n]]$ over a field $k$.
Let $G$ be a finite subgroup of the general linear group $\operatorname{GL}_n(k)$, and assume that the characteristic of $k$ does not divide the order of $G$.
Let $R=S^G$ be the invariant subring.
Then $\Db(\mdl R)=\gen{S}{n+1}{}$ holds, and hence one has
$$
\dim\Db(\mdl R)\le n=\dim R<\infty.
$$
\end{corollary}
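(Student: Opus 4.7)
The plan is to apply Proposition~\ref{dirsmd}(2)(b) to the inclusion $\phi \colon R \hookrightarrow S$. To do so I need to verify the four hypotheses of that proposition: (i) $\phi$ is a split monomorphism of $(R,R)$-bimodules, (ii) $\gldim S = n$, (iii) $R$ and $S$ are right Noetherian, and (iv) $S$ is finitely generated as a right $R$-module.

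For (i), the key input is the assumption that $\operatorname{char} k$ does not divide $|G|$. This allows one to define the Reynolds operator $\rho \colon S \to R$ by $\rho(s) = \frac{1}{|G|}\sum_{g\in G} g(s)$. Since each $g \in G$ acts by a $k$-algebra automorphism fixing $R$ elementwise, $\rho$ is $R$-bilinear and satisfies $\rho|_R = \operatorname{id}_R$, giving the desired splitting. For (ii), $S$ is either a polynomial ring or a regular local ring in $n$ variables over a field, hence has global dimension exactly $n$. For (iv), this is the classical Hilbert–Noether finiteness theorem: $S$ is a finitely generated module over the invariant ring $R = S^G$ whenever $G$ is finite. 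From this and the fact that $S$ is Noetherian, (iii) follows, since a subring over which a Noetherian ring is module-finite is itself Noetherian (the Eakin–Nagata theorem).

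With all four hypotheses in place, Proposition~\ref{dirsmd}(2)(b) gives $\Db(\mod R) = \gen{S}{n+1}{}$ and $\dim \Db(\mod R) \le n = \gldim S$. To finish, I need to identify this $n$ with $\dim R$: since $S$ is module-finite over $R$, the going-up theorem yields $\dim R = \dim S = n$, completing the chain of (in)equalities.

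The argument is essentially a direct invocation of Proposition~\ref{dirsmd}, so there is no serious obstacle; the only point requiring care is the invariant-theoretic input, namely that the Reynolds operator exists (which is where the hypothesis on $\operatorname{char} k$ is used) and that $S$ is a finite $R$-module. Both are standard, but they are the substantive content of the corollary beyond the already-established proposition.
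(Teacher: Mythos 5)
Your proof is correct and follows essentially the same route as the paper: apply Proposition \ref{dirsmd}(2)(b), using the Reynolds operator (which exists because $\operatorname{char}k \nmid |G|$) to split $R \hookrightarrow S$ as $(R,R)$-bimodules, together with the standard invariant-theoretic facts that $S$ is module-finite over $R$ and $\gldim S = n$. Your appeal to Eakin--Nagata for Noetherianity of $R$ and the going-up argument for $\dim R = n$ are details the paper leaves implicit, but the substance of the argument is identical.
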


\begin{proof}
Let $S=k[x_1,\dots,x_n]$.
As the order of $G$ is invertible in $k$, a Raynolds operator for $(R, S)$ can be defined, and $R$ is a direct summand of $S$ as an $R$-module; see \cite[Page 281]{BH}.
Since $S$ is a finitely generated $R$-algebra and an integral extension of $R$ by \cite[Page 282]{BH}, it is a finitely generated $R$-module.
Moreover, $S$ has global dimension $n$.
Thus the assertion follows from Proposition \ref{dirsmd}(2)(b).
The case where $S=k[[x_1,\dots,x_n]]$ is similar.
\end{proof}

On the other hand, taking an algebra that is a finitely generated free module does not decrease the dimension.
Compare the result below with Proposition \ref{dirsmd}(2)(b).

\begin{proposition}
Let $R\to S$ be a homomorphism of Noetherian rings.
Assume that $S$ is a finitely generated free $R$-module.
If $\Db(\mdl S)=\gen{G}{n}{S}$ for some object $G$ and some integer $n$, then $\Db(\mdl R)=\gen{G}{n}{R}$.
Therefore $\dim\Db(\mdl R)\le\dim\Db(\mdl S)$.
\end{proposition}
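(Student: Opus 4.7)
The plan is to apply Lemma \ref{essentially dense} to the restriction-of-scalars functor $F\colon \Db(\mdl S)\to \Db(\mdl R)$. Because $S$ is finitely generated over $R$, restriction carries $\mdl S$ into $\mdl R$ and is exact on module categories, hence induces an exact functor on bounded derived categories. Under this identification $FG$ is simply the complex $G$ regarded as an object of $\Db(\mdl R)$, so $\gen{FG}{n}{R}$ and $\gen{G}{n}{R}$ coincide (thickness of $\Db(\mdl R)$ inside $\D(\Mod R)$ together with Remark \ref{hee}(1)(c) reconciles this with the notation fixed in Definition \ref{dtcat}).

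The key step is to verify that $F$ is essentially dense. Here I would use the assumption that $S\cong R^{\oplus m}$ as an $R$-module for some $m\ge 1$. For any $X\in\Db(\mdl R)$, flatness of $S$ permits forming $X\tensor_R S$ componentwise, and since each term $X^i$ is finitely generated over $R$, the complex $X\tensor_R S$ is a bounded complex of finitely generated $S$-modules, hence lies in $\Db(\mdl S)$. Restricting to $R$ gives
$$
F(X\tensor_R S)\cong X\tensor_R S\cong X^{\oplus m}\quad\text{in }\Db(\mdl R),
$$
which exhibits $X$ as a direct summand of $F(X\tensor_R S)$ and shows $F$ is essentially dense.

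With this in hand, Lemma \ref{essentially dense} applied to $\Db(\mdl S)=\gen{G}{n}{S}$ yields $\Db(\mdl R)=\gen{FG}{n}{R}=\gen{G}{n}{R}$; choosing $G$ realizing $\dim\Db(\mdl S)$ and the corresponding $n=\dim\Db(\mdl S)+1$ gives the dimension inequality. The only real obstacle is the essential-density verification, but freeness of $S$ over $R$ makes this immediate by splitting $X$ off from $X^{\oplus m}$; no derived-functor subtleties arise because $S$ is flat, and the boundedness/finite-generation conditions on $X\tensor_R S$ are automatic from the hypotheses.
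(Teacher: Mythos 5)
Your proposal is correct and follows essentially the same route as the paper: form $X\tensor_R S\in\Db(\mdl S)=\gen{G}{n}{S}$, apply restriction of scalars $-\tensor^{\bf L}_S S_R$, recognize the result as $X^{\oplus r}$ using freeness of $S$ over $R$, and conclude by taking a direct summand. The only cosmetic difference is that you package the argument explicitly through Lemma \ref{essentially dense}, whereas the paper carries out the same chain of isomorphisms directly.
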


\begin{proof}
Let $X$ be an object of $\Db(\mod R)$.
We have $X\tensor^{\bf L}_{R}S\cong X\tensor_RS\in\Db(\mod S)=\gen{G}{n}{S}$.
Applying the exact functor $-\tensor_S^{\bf L}S_R:\Db(\mod S)\to\Db(\mod R)$ implies that $X\tensor^{\bf L}_{R}S_R$ belongs to $\gen{G}{n}{R}$.
Since there are isomorphisms $X\tensor^{\bf L}_{R}S_R\cong X\tensor_RS_R\cong X^{\oplus r}$ in $\Db(\mod R)$, where $r$ is the free rank of $S$ over $R$, the object $X^{\oplus r}$ is in $\gen{G}{n}{R}$, and so is its direct summand $X$.
\end{proof}

For a commutative ring $R$, we denote the set of minimal prime ideals of $R$ by $\Min R$.
As is well-known, $\Min R$ is a finite set whenever $R$ is Noetherian. 
Also, we denote by $\lambda(R)$ the infimum of the integers $n\ge0$ such that there is a filtration
$$
0=I_0\subseteq I_1\subseteq \cdots\subseteq I_n=R
$$
of ideals of $R$ with $I_i/I_{i-1}\cong R/\p_i$ for $1\le i\le n$, where $\p_i\in \Spec R$.
If $R$ is Noetherian, then such a filtration exists and $\lambda(R)$ is a non-negative integer.

\begin{proposition}\label{minprime}
Let $R$ be a Noetherian commutative ring.
\begin{enumerate}[\rm(1)]
\item
Suppose that for every $\p\in\Min R$ there exist an $R/\p$-complex $G(\p)$ and an integer $n(\p)\ge0$ such that $\Db(\mdl R/\p)=\gen{G(\p)}{n(\p)}{}$.
Then $\Db(\mdl R)=\gen{G}{n}{}$ holds, where $G=\bigoplus_{\p\in\Min R}G(\p)$ and $n=\lambda(R)\cdot\max\{\,n(\p)\mid\p\in\Min R\,\}$.
\item
There is an inequality
$$
\dim \Db(\mdl R)\le \lambda(R)\cdot\sup\{\,\dim\Db(\mod R/\p)+1\mid\p\in\Min R\,\}-1.
$$
\end{enumerate}
\end{proposition}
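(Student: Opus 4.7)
The plan is to build any bounded complex $X\in\Db(\mod R)$ from $\lambda(R)$ successive extensions of complexes living over the quotient rings $R/\q$, $\q\in\Min R$, by filtering $X$ componentwise with a prime filtration of $R$. Fix a filtration $0=I_0\subseteq I_1\subseteq\cdots\subseteq I_{\lambda(R)}=R$ of ideals realizing $\lambda(R)$, with $I_j/I_{j-1}\cong R/\p_j$ for some $\p_j\in\Spec R$, and for each $j$ choose a minimal prime $\q_j\in\Min R$ contained in $\p_j$.

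For $X\in\Db(\mod R)$, let $XI_j$ denote the subcomplex of $X$ whose $n$th component is the submodule $X^nI_j\subseteq X^n$. Since the differentials of $X$ are $R$-linear and $R$ is Noetherian, each $XI_j$ is a well-defined bounded complex of finitely generated $R$-modules, giving a filtration $0=XI_0\subseteq\cdots\subseteq XI_{\lambda(R)}=X$ in $\C(\mod R)$. The key observation is that $\p_j$ annihilates $I_j/I_{j-1}$, so $\p_j\cdot X^nI_j\subseteq X^nI_{j-1}$; therefore each subquotient $Y_j:=XI_j/XI_{j-1}$ is a bounded complex of finitely generated $R/\p_j$-modules, hence of $R/\q_j$-modules. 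By the hypothesis applied at $\q_j$ we have $Y_j\in\gen{G(\q_j)}{n(\q_j)}{R/\q_j}$, and since the restriction-of-scalars functor $\Db(\mod R/\q_j)\to\Db(\mod R)$ is exact and $G(\q_j)$ is a direct summand of $G$, it follows that $Y_j\in\gen{G}{n_{\max}}{R}$, where $n_{\max}:=\max\{n(\p)\mid\p\in\Min R\}$.

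The short exact sequences $0\to XI_{j-1}\to XI_j\to Y_j\to 0$ of complexes induce exact triangles in $\Db(\mod R)$. Using the standard inclusion $\gen{G}{a}{}\ast\gen{G}{b}{}\subseteq\gen{G}{a+b}{}$, which follows from the associativity of $\ast$, an induction on $j$ then yields $XI_j\in\gen{G}{j\cdot n_{\max}}{R}$, so that $X=XI_{\lambda(R)}\in\gen{G}{\lambda(R)\cdot n_{\max}}{R}$. This establishes (1), and (2) follows by choosing $G(\p)$ with $n(\p)=\dim\Db(\mod R/\p)+1$ for each $\p\in\Min R$ (the inequality being trivial if some dimension is infinite). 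The principal verification is the setup of the submodule filtration $XI_\bullet$ together with the identification of $Y_j$ as a bounded complex over $R/\q_j$; the remainder of the argument is a formal computation with the triangulated operation $\ast$.
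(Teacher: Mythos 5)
Your proposal is correct and takes essentially the same approach as the paper's proof: filter $R$ by a shortest prime filtration, filter each bounded complex $X$ componentwise by $XI_j$, identify each subquotient as a bounded complex over $R/\q_j$ for a chosen minimal prime $\q_j\subseteq\p_j$, and then piece the pieces together using the exact triangles and the additivity of $\ast$. Your write-up supplies a bit more detail than the paper (in particular the observation that $\p_j\cdot X^nI_j\subseteq X^nI_{j-1}$, which is what lets you view $Y_j$ as a complex over $R/\p_j$ and hence over $R/\q_j$), but the argument is the same.
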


\begin{proof}
(1) There exists a filtration
$$
0=I_0\subsetneq I_1\subsetneq\cdots\subsetneq I_{\lambda(R)}=R
$$
of ideals of $R$ such that for each $1\le i\le\lambda(R)$ one has $I_i/I_{i-1}\cong R/\p_i$, where $\p_i$ is a prime ideal of $R$.
Choose a minimal prime ideal $\q_i$ contained in $\p_i$.
Let $X$ be an object of $\Db(\mdl R)$.
There is a filtration
$$
0=XI_0\subseteq XI_1\subseteq\cdots\subseteq XI_{\lambda(R)}=X
$$
of subcomplexes of the $R$-complex $X$.
Note that the residue complex $XI_i/XI_{i-1}$ can be regarded as an object of $\Db(\mdl R/\q_i)$.
Since $\Db(\mdl R/\q_i)=\gen{G(\q_i)}{n(\q_i)}{R/\q_i}$, we have that $XI_i/XI_{i-1}$, as an object of $\Db(\mdl R)$, belongs to the subcategory $\gen{G(\q_i)}{n(\q_i)}{R}$ of $\Db(\mdl R)$.
The exact triangles
$$
XI_{i-1} \to XI_i \to XI_i/XI_{i-1} \to \sus XI_{i-1}
$$
show that $X$ is in $\gen{\bigoplus_{i=1}^{\lambda(R)}G(\q_i)}{\sum_{i=1}^{\lambda(R)}n(\q_i)}{}$. 
Since $\bigoplus_{i=1}^{\lambda(R)}G(\q_i)\in\gen{G}{}{}$ and $\sum_{i=1}^{\lambda(R)}n(\q_i)\le n$, the assertion holds. 

(2) This immediately follows from (1).
\end{proof}

\begin{remark}
With the notation of the proof of Proposition \ref{minprime}, we easily see that $\Spec R=V(\p_1)\cup\cdots\cup V(\p_m)=V(\q_1)\cup\cdots\cup V(\q_m)$.
Hence we have $\Min R=\{\q_1,\dots,\q_m\}$.
\end{remark}

The proposition above yields the result below, which implies for example that the bounded derived categories of finitely generated modules over Stanley-Reisner rings have finite dimension.

\begin{corollary}
Let $k$ be a field.
Let $R$ be either $k[x_1,\dots,x_r]/(m_1,\dots,m_s)$ or $k[[x_1,\dots,x_r]]/(m_1,\dots,m_s)$, where $m_1,\dots,m_s$ are monomials.
Then $\Db(\mdl R)=\gen{G}{n}{}$, where $G=\bigoplus_{\p\in\Min R}R/\p$ and $n=\lambda(R)\cdot\max\{\,\dim R/\p+1\mid\p\in\Min R\,\}$. 
In particular, one has
$$
\dim \Db(\mdl R)\le \lambda(R)\cdot\max\{\,\dim R/\p+1\mid\p\in\Min R\,\}-1<\infty.
$$
\end{corollary}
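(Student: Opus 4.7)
The plan is to reduce the statement directly to Proposition \ref{minprime}(1) by understanding the minimal primes of $R$ and by generating $\Db(\mod R/\p)$ for each such prime.

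First, I would observe that since the defining ideal $I=(m_1,\dots,m_s)$ is a monomial ideal in the polynomial ring (or power series ring) $k[x_1,\dots,x_r]$, the minimal primes of $R$ are precisely the images of ideals generated by subsets of the variables. This is the classical fact that the irreducible components of a monomial scheme are coordinate subspaces: one takes the unique irredundant intersection of prime monomial ideals containing $I$. Consequently, for every $\p\in\Min R$, one gets an isomorphism $R/\p\cong k[x_{i_1},\dots,x_{i_d}]$ or $R/\p\cong k[[x_{i_1},\dots,x_{i_d}]]$, where $d=\dim R/\p$ and $\{i_1,\dots,i_d\}$ is the complement of the set of variables in $\p$.

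Next, each such ring $R/\p$ is regular of global dimension $\dim R/\p$, and its category of finitely generated projectives consists of free modules (by the Quillen--Suslin theorem in the polynomial case, or trivially since the ring is local in the power series case). Thus $\proj(\mod R/\p)=\add_{R/\p}(R/\p)$, and Corollary \ref{krskssn}(2) yields
\[
\Db(\mod R/\p)=\gen{R/\p}{\dim R/\p+1}{}.
\]

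Finally, I would invoke Proposition \ref{minprime}(1) with $G(\p)=R/\p$ and $n(\p)=\dim R/\p+1$ for each $\p\in\Min R$. Noting that $\lambda(R)$ is a finite non-negative integer because $R$ is Noetherian, and that each $G(\p)$ is a direct summand of $G=\bigoplus_{\p\in\Min R}R/\p$, the conclusion $\Db(\mod R)=\gen{G}{n}{}$ with $n=\lambda(R)\cdot\max\{\dim R/\p+1\mid\p\in\Min R\}$ follows, and the displayed inequality comes from Proposition \ref{minprime}(2). The only non-formal step is the identification of the minimal primes of a monomial ideal, which is a standard fact and the mild obstacle in the argument; the rest is bookkeeping with the results already established in this section.
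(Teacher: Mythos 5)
Your argument is correct and follows the same route as the paper: identify the minimal primes as images of ideals generated by subsets of the variables, observe that each $R/\p$ is then a polynomial or power series ring of global dimension $\dim R/\p$, apply Corollary \ref{krskssn}(2), and feed the result into Proposition \ref{minprime}. One small remark: the appeal to Quillen--Suslin is unnecessary, since $\gen{\XX}{}{}$ is closed under direct summands by definition, so $\gen{\proj(\mod R/\p)}{}{}=\gen{R/\p}{}{}$ for any ring without needing f.g.\ projectives to actually be free.
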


\begin{proof}
Note that each minimal prime ideal $\p$ of $R$ is generated by some of the variables $x_1,\dots,x_r$.
Hence the residue ring $R/\p$ is isomorphic to a polynomial ring or a formal power series ring over $k$, and the global dimension of $R/\p$ is equal to $\dim R/\p$.
Now the corollary follows from Proposition \ref{minprime} and Corollary \ref{krskssn}(2).
\end{proof}

Let $R$ be a commutative Noetherian ring.
We set
\begin{align*}
\LoL(R) & =\inf\{\,n\ge0\mid(\rad R)^n=0\,\}, \\
r(R) & =\min\{\,n\ge0\mid(\nil R)^n=0\,\},
\end{align*}
where $\rad R$ and $\nil R$ denote the Jacobson radical and the nilradical of $R$, respectively.
The first number is called the {\em Loewy length} of $R$ and is finite if (and only if) $R$ is Artinian, while the second one is always finite.
Let $R_{\rm red}=R/\nil R$ be the associated reduced ring.
When $R$ is reduced, we denote by $\overline R$ the integral closure of $R$ in the total quotient ring $Q$ of $R$.
Let $C_R$ denote the {\em conductor} of $R$, i.e., $C_R$ is the set of elements $x\in Q$ with $x\overline R\subseteq R$.
We can give an explicit generator and an upper bound of the dimension of the bounded derived category of finitely generated modules over a one-dimensional complete local ring.

\begin{proposition}\label{dimension one}
Let $R$ be a Noetherian commutative complete local ring of Krull dimension one with residue field $k$.
Then it holds that $\Db(\mdl R)=\gen{\overline{R_{\rm red}}\oplus k}{r(R)\cdot(2\LoL(R_{\rm red}/C_{R_{\rm red}})+2)}{}$.
In particular,
$$
\dim\Db(\mdl R)\le r(R)\cdot(2\LoL(R_{\rm red}/C_{R_{\rm red}})+2)-1<\infty.
$$
\end{proposition}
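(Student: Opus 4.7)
The plan is to proceed by a two-stage reduction: first kill the nilpotents of $R$, then in the reduced case split off a regular $\overline R$-part and an Artinian $R/C$-part via the conductor. For an arbitrary $X\in\Db(\mdl R)$ the descending chain of subcomplexes
$$X\supseteq X\cdot\nil R\supseteq X\cdot(\nil R)^2\supseteq\cdots\supseteq X\cdot(\nil R)^{r(R)}=0$$
has $r(R)$ successive subquotients, each annihilated by $\nil R$ and hence naturally a bounded complex over $R_{\rm red}$. Iterating the associated exact triangles shows that any generation of $\Db(\mdl R_{\rm red})$ by a fixed object $G$ in $m$ steps lifts to a generation of $\Db(\mdl R)$ by $G$ in $r(R)\cdot m$ steps, so it suffices to prove the claim under the extra assumption that $R$ is reduced, with $m=2\LoL(R/C_R)+2$.

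So assume $R$ is reduced. Being complete local, $R$ is Nagata, hence $\overline R$ is module-finite over $R$; the reducedness together with Krull dimension one forces $\overline R\cong\prod_{\p\in\Min R}\overline{R/\p}$ to be a finite product of complete discrete valuation rings, so $\gldim\overline R=1$ and Corollary \ref{krskssn}(2) gives $\Db(\mdl\overline R)=\gen{\overline R}{2}{}$. Now fix $X\in\Db(\mdl R)$. The conductor $C=C_R$ is an ideal of both $R$ and $\overline R$ and annihilates $\overline R/R$; tensoring the exact sequence $0\to R\to\overline R\to\overline R/R\to 0$ termwise with $X$ produces a 4-term exact sequence of bounded $R$-complexes
$$0\longrightarrow L\longrightarrow X\longrightarrow X\tensor_R\overline R\longrightarrow M\longrightarrow 0$$
with $L=\ker(X\to X\tensor_R\overline R)$ and $M=X\tensor_R(\overline R/R)$. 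Both $L$ and $M$ are killed by $C$, so lie in $\Db(\mdl R/C)$, while $X\tensor_R\overline R$ carries a natural $\overline R$-action through the second factor (its differentials remain $\overline R$-linear), placing it in $\Db(\mdl\overline R)$.

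Splicing through the image of $X\to X\tensor_R\overline R$ breaks the 4-term sequence into two short exact sequences, hence two exact triangles, which exhibit $X$ as a two-step extension of $L$, $\sus^{-1}M$, and $X\tensor_R\overline R$. Since $R/C$ is commutative Artinian local with residue field $k$ and Loewy length $\LoL(R/C)$, Rouquier's Loewy-length bound recalled in the introduction gives $L,M\in\gen{k}{\LoL(R/C)+1}{}$ inside $\Db(\mdl R/C)$, and this containment transports to $\Db(\mdl R)$ along the exact restriction functor. Combining with $X\tensor_R\overline R\in\gen{\overline R}{2}{}$ via the associativity of $*$ recorded in Remark \ref{hee}(1)(b) delivers the desired membership $X\in\gen{\overline R\oplus k}{2\LoL(R/C)+2}{}$, and reinstating the factor $r(R)$ from the first stage completes the proof.

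The principal obstacle is the analysis of the 4-term sequence: one must verify carefully that $X\tensor_R\overline R$ really defines an object of $\Db(\mdl\overline R)$ (for which the Nagata finiteness of $\overline R$ over $R$, and hence completeness, is essential) and that $L$ and $M$ genuinely belong to $\Db(\mdl R/C)$ so that the Rouquier-Loewy bound can be invoked. A secondary, numerical difficulty is matching the precise constant $2\LoL(R/C_R)+2$: applying the Loewy bound as stated in the introduction gives only a looser value of the form $2\LoL(R/C)+4$, so one must either invoke a sharper form of the bound in the commutative Artinian local setting or repackage the splitting (for instance by using the triangle $X\tensor_R^{\mathbf L}C\to X\to X\tensor_R^{\mathbf L}R/C\to$) so that the two $R/C$-pieces are absorbed into one.
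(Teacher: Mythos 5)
Your strategy is exactly the one the paper follows: filter out the nilpotents one power of $\nil R$ at a time to reduce to the reduced case (this is precisely Rouquier's Lemma 7.35, which the paper cites rather than reproves), then tensor $X$ with $0\to R\to\overline R\to\overline R/R\to 0$, split the resulting four-term exact sequence of complexes at the image term, and exploit that the kernel and cokernel complexes are killed by $C_R$ while the middle term lives over the regular ring $\overline R$. Your justification that the kernel $L$ is annihilated by $C_R$ is correct (each $L^i$ is a quotient of $\Tor_1^R(X^i,\overline R/R)$), and the two exact triangles indeed yield $X\in\gen{k}{a}{}*\gen{\overline R}{2}{}*\gen{k}{a}{}\subseteq\gen{\overline R\oplus k}{2a+2}{}$ once one knows that the Artinian pieces lie in $\gen{k}{a}{}$ for the right value of $a$.

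The one genuine gap is the numerical one you flag at the end. The statement "derived dimension is at most Loewy length" from the introduction only gives $\Db(\mod R/C_R)=\gen{G}{\LoL(R/C_R)+1}{}$ for \emph{some} generator $G$, which would produce $2\LoL(R/C_R)+4$. What the paper actually uses is Rouquier's Proposition 7.37 (the sharper form over Artinian rings), which gives $\Db(\mod R/C_R)=\gen{k}{\LoL(R/C_R)}{}$ with $k$ itself as the generator and no $+1$; substituting $a=\LoL(R/C_R)$ in the count above then yields exactly $2\LoL(R/C_R)+2$. So the argument is complete once you replace the looser inequality by that citation. The alternative repackaging you suggest (using $X\tensor_R^{\mathbf L}C\to X\to X\tensor_R^{\mathbf L}R/C$) is not needed.
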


\begin{proof}
By \cite[Lemma 7.35]{R2}, we may assume that $R=R_{\rm red}$.
Then we have $r(R)=1$.
The ring $\overline R$ is finitely generated as an $R$-module; see \cite[Corollary 4.6.2]{SH}.
We have that $\overline R$ is a Noetherian normal ring of Krull dimension one, and hence it is regular.
Therefore, $\overline R$ has global dimension one, which implies $\Db(\mdl\overline R)=\gen{\overline R}{2}{}$ by Corollary \ref{krskssn}(2). 
On the other hand, the residue ring $R/C_R$ is Artinian by \cite[Page 236]{SH}, so $\Db(\mdl R/C_R)=\gen{k}{\LoL(R/C_R)}{}$ holds by \cite[Proposition 7.37]{R2}.
Now, let $X$ be an object of $\Db(\mdl R)$, i.e., a bounded complex of finitely generated $R$-modules.
Applying $X\otimes_R-$ to the natural exact sequence $0\to R\xrightarrow{f} \overline R\to \overline R/R\to0$ of $(R, R)$-bimodules, we get an exact sequence 
$$
0\to K\to X\xrightarrow{X\tensor_Rf} X\tensor_R\overline R\to X\tensor_R\overline R/R\to0
$$
in $\C^{\rm b}(\mod R)$. 
Decomposing this into two short exact sequences, we obtain two exact triangles
\begin{align*}
& K\to X\to Y\to \sus K,\\
& Y\to X\tensor_R\overline R\to X\tensor_R\overline R/R\to \sus Y
\end{align*}
in $\Db(\mod R)$.
Since $C_R$ is a common ideal of $R$ and $\overline R$ by \cite[Exercise 2.11]{SH}, $\overline{R}/R$ is annihilated by $C_R$, and hence $\overline{R}/R$ is an $(R,R/C_R)$-bimodule.
The complexes $X\tensor_R\overline R/R$ and $X\tensor_R\overline R$ can be regarded as objects of 
$\Db(\mod R/C_R)$ and $\Db(\mod \overline R)$, respectively. 
Thus we see that $Y$ belongs to $\gen{\overline R\oplus k}{\LoL(R/C_R)+2}{}$. 
As each $K^i$ is a homomorphic image of $\Tor_1^R(X^i,\overline{R}/R)$, it is annihilated by $C_R$.
Therefore the complex $K$ can be regarded as an object of $\Db(\mod R/C_R)$. 
Consequently, the object $X$ of $\Db(\mod R)$ belongs to $\gen{\overline R\oplus k}{2\LoL(R/C_R)+2}{}$. 
\end{proof}

Let $R$ be a commutative Noetherian local ring of Krull dimension $d$ with maximal ideal $\m$. 
We denote by $e(R)$ the multiplicity of $R$, that is, $e(R)=\lim_{n\to\infty}\frac{d!}{n^d}\ell_R(R/\m^{n+1})$.
Recall that a {\em numerical semigroup} is defined as a subsemigroup $H$ of the additive semigroup $\mathbb{N}=\{0,1,2,\dots\}$ containing $0$ 
such that $\mathbb{N}\backslash H$ is a finite set. 
For a numerical semigroup $H$, let $c(H)$ denote the {\em conductor} of $H$, that is, 
$$
c(H)=\max\{\,i\in\mathbb{N}\mid i-1\notin H\,\}.
$$
For a real number $\alpha$, put $\lceil\alpha\rceil=\min\{\,n\in\Z\mid n\ge\alpha\,\}$.
Making use of the above proposition, one can get an upper bound of the dimension of the bounded derived category of finitely generated modules over a {\em numerical semigroup ring} $k[[H]]$ over a field $k$, in terms of the conductor of the semigroup and the multiplicity of the ring.

\begin{corollary}\label{mtk}
Let $k$ be a field and $H$ be a numerical semigroup.
Let $R$ be the numerical semigroup ring $k[[H]]$, that is, the subring $k[[t^h|h\in H]]$ of $S=k[[t]]$.
Then $\Db(\mdl R)=\gen{S\oplus k}{2\left\lceil\frac{c(H)}{e(R)}\right\rceil+2}{}$ holds.
Hence
$$
\dim\Db(\mdl R)\le2\left\lceil\frac{c(H)}{e(R)}\right\rceil+1.
$$
\end{corollary}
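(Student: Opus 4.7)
The plan is to apply Proposition \ref{dimension one} directly. Since $R=k[[H]]$ is a subring of the domain $S=k[[t]]$, it is itself a domain, so $R=R_{\rm red}$ and $r(R)=1$. It is standard that $R$ is a Noetherian commutative complete local ring of Krull dimension one with maximal ideal $\m=(t^h\mid h\in H\setminus\{0\})$ and residue field $k$, so the hypotheses of Proposition \ref{dimension one} are met.

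First I would identify the integral closure. Because $\N\setminus H$ is finite, one has $t^{c(H)}S\subseteq R$, and then $S=\sum_{i=0}^{c(H)-1}Rt^{i}$ as an $R$-module, so $S$ is a finite integral extension of $R$. Also $S\subseteq Q(R)$ (for instance, $t=t^{c(H)+1}/t^{c(H)}$), and $S$ is a DVR, hence normal. Therefore $\overline{R}=S$. Next, the conductor $C_{R}$ is the largest ideal of $S$ contained in $R$; since every nonzero ideal of $S$ has the form $t^{n}S$, and $t^{n}S\subseteq R$ holds exactly when every integer $j\ge n$ lies in $H$, i.e.\ when $n\ge c(H)$, we obtain $C_{R}=t^{c(H)}S$.

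The key remaining step is to bound $\LoL(R/C_{R})\le\lceil c(H)/e(R)\rceil$. Here I would invoke the classical identity $e(R)=\min(H\setminus\{0\})$: for $m:=\min(H\setminus\{0\})$, the element $t^{m}$ is a parameter, and via the Ap\'ery set of $H$ modulo $m$, one sees that $\ell_{R}(R/(t^{m}))=m$ and that $(t^{m})$ is a reduction of $\m$, giving the multiplicity. Writing $m=e(R)$, any element of $\m^{n}$ is an $R$-linear combination of monomials $t^{h_{1}}\cdots t^{h_{n}}$ with $h_{j}\ge m$, hence lies in $t^{nm}S$. Taking $n=\lceil c(H)/m\rceil$ yields $\m^{n}\subseteq t^{nm}S\subseteq t^{c(H)}S=C_{R}$, and so $\LoL(R/C_{R})\le\lceil c(H)/e(R)\rceil$.

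Feeding $r(R)=1$, $\overline{R_{\rm red}}=S$, and the bound on $\LoL(R_{\rm red}/C_{R_{\rm red}})$ into Proposition \ref{dimension one} immediately gives both conclusions of the corollary. The only genuine obstacle is the multiplicity identity $e(R)=\min(H\setminus\{0\})$ for numerical semigroup rings; every other piece is a short direct verification.
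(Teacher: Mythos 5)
Your proof is correct and follows essentially the same route as the paper: reduce to Proposition \ref{dimension one} by identifying $\overline{R}=S$, $C_R=t^{c(H)}S$, $r(R)=1$, and computing $\LoL(R/C_R)$ in terms of $c(H)$ and $e(R)=\min(H\setminus\{0\})$. The only cosmetic differences are that you establish the inequality $\LoL(R/C_R)\le\lceil c(H)/e(R)\rceil$ (which indeed suffices, since $\gen{X}{n}{}$ grows with $n$) where the paper records the exact equality, and that you flesh out the standard identity $e(R)=\min(H\setminus\{0\})$ via the Ap\'ery set, which the paper simply asserts.
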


\begin{proof}
One can write $H={\langle a_1,\dots,a_n\rangle}$ with $0<a_1<\cdots<a_n$ and $\gcd(a_1,\dots,a_n)=1$ (cf. \cite[Page 178]{BH}).
Then one has $R=k[[t^{a_1},\dots,t^{a_n}]]$ and observes that $e(R)=a_1$.
As $R$ is a domain, $R_{\rm red}=R$ and $r(R)=1$ hold.
Note that $\overline R=S$.
Putting $c=c(H)$, one has $C_R=t^cS$.
The equalities
$$
\LoL(R/C_R)=\LoL(R/t^cS)=\min\{\,i\mid\m^i\subseteq t^cS\,\}=\min\{\,i\mid a_1i\ge c\,\}=\left\lceil\frac{c}{a_1}\right\rceil.
$$
hold, where $\m=(t^{a_1},\dots,t^{a_n})$ is the maximal ideal of $R$.
Proposition \ref{dimension one} completes the proof.
\end{proof}

\section{Finiteness}\label{finiteness}

In this section, we consider finiteness of the dimension of the bounded derived category of finitely generated modules over a complete local ring. 
Let $R$ be a commutative algebra over a field $k$.
Rouquier \cite{R2} proved the finiteness of the dimension of $\Db(\mod R)$ when $R$ is an affine $k$-algebra, where the fact that the enveloping algebra $R\tensor_kR$ is Noetherian played a crucial role.
The problem in the case where $R$ is a complete local ring is that one cannot hope that $R\tensor_kR$ is Noetherian.
Our methods instead use the completion of the enveloping algebra, that is, the complete tensor product $R\ctensor_kR$, which is a Noetherian ring whenever $R$ is a complete local ring with coefficient field $k$.

Let $R$ and $S$ be commutative Noetherian complete local rings with maximal ideals $\m$ and $\n$, respectively.
Suppose that they contain fields and have the same residue field $k$, i.e., $R/\m\cong k\cong S/\n$.
Then Cohen's structure theorem yields isomorphisms
\begin{align*}
R & \cong k[[x_1,\dots,x_m]]/(f_1,\dots,f_a),\\
S & \cong k[[y_1,\dots,y_n]]/(g_1,\dots,g_b).
\end{align*}
We denote by $R\ctensor_kS$ the {\em complete tensor product} of $R$ and $S$ over $k$, namely,
$$
R\ctensor_kS=\varprojlim_{i,j}(R/\m^i\tensor_kS/\n^j).
$$
For $r\in R$ and $s\in S$, we denote by $r\ctensor s$ the image of $r\tensor s$ by the canonical ring homomorphism $R\tensor_kS\to R\ctensor_kS$. 
Note that there is a natural isomorphism
$$
R\ctensor_kS\cong k[[x_1,\dots,x_m,y_1,\dots,y_n]]/(f_1,\dots,f_a,g_1,\dots,g_b).
$$
Details of complete tensor products can be found in \cite[Chapter V]{S}.

Recall that a ring extension $A\subseteq B$ is called {\em separable} if $B$ is projective as a $B\tensor_AB$-module.
This is equivalent to saying that the map $B\tensor_AB\to B$ given by $x\tensor y\mapsto xy$ is a split epimorphism of $B\tensor_AB$-modules.

Now, let us prove our main theorem. 

\begin{theorem}\label{main result}
Let $R$ be a Noetherian complete local commutative ring containing a field with perfect residue field.
Then there exist a finite number of prime ideals $\p_1,\dots,\p_n\in\Spec R$ and an integer $m\ge 1$ such that
$$
\DD=\gen{\ess_{\DD}(R/\p_1\oplus\cdots\oplus R/\p_n)}{m}{}
$$
for each $\DD\in\{\Db(\mod R),\Db(\Mod R),\Dm(\mod R),\Dm(\Mod R)\}$.
In particular, one has $\Db(\mod R)=\gen{R/\p_1\oplus\cdots\oplus R/\p_n}{m}{}$ and hence $\dim\Db(\mod R)<\infty$.
\end{theorem}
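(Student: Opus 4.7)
The plan is to prove the theorem by Noetherian induction, adapting Rouquier's affine-case argument to the complete local setting by replacing the enveloping algebra $R\tensor_k R$ with the complete tensor product $\Lambda:=R\ctensor_k R$, where $k$ is a coefficient field of $R$ guaranteed by Cohen's structure theorem. Writing $R\cong k[[x_1,\dots,x_n]]/I$, the ring $\Lambda$ is Noetherian as a quotient of the power series ring $k[[x_1,\dots,x_n,y_1,\dots,y_n]]$, and $R$ is a cyclic $\Lambda$-module via the multiplication map $\mu:\Lambda\to R$, whose kernel is generated by the $n$ elements $x_i\ctensor 1-1\ctensor x_i$.

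First I would reduce to the case where $R$ is a complete local domain. If $\p_1,\dots,\p_s$ are the minimal primes of $R$, then the filtration argument of Proposition \ref{minprime} produces, for any $X\in\DD$, a finite filtration of $X$ whose successive cones can be viewed as objects over the quotients $R/\p_i$. Each $R/\p_i$ is again a Noetherian complete local ring containing a field with perfect residue field, so handling each of them separately, enlarging the list of primes, and multiplying the generation depth by $\lambda(R)$ reduces the problem to the case where $R$ is a complete local domain with fraction field $K$.

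Now I run Noetherian induction on the set of non-zero ideals of $R$: assume the conclusion holds for every quotient $R/J$ with $J\ne 0$. Since $k$ is perfect, the extension $k\subseteq K$ is separable, and this should produce a non-zero $f\in R$ and an integer $d$ such that the localization $R_f$ has projective dimension at most $d$ over the corresponding localization $\Lambda_f$. Concretely, the $n$ generators of $\ker(\mu)$ form a Koszul-type sequence that becomes regular on a dense open of $\Spec\Lambda$ meeting the diagonal, and $f$ will cut out the failure locus, with the expected bound $d\le n$. Fixing such an $f$ and a finite $\Lambda_f$-projective resolution $P_\bullet\to R_f$ of length $d$, and using the isomorphism $X\cong X\tensor_R^{\mathbf{L}}R$, one exhibits $X_f$ as an iterated extension in $d+1$ steps of the complexes $X\tensor_R^{\mathbf{L}}P_i$. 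Because each $P_i$ is a summand of a free $\Lambda$-module, and $X\tensor_R^{\mathbf{L}}\Lambda$ parsed via the complete tensor product is a coproduct of copies of $R$ as a complex of right $R$-modules, one obtains $X_f\in\gen{\ess_\DD R}{d+1}{}$.

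To close the induction, the triangle $X\to X_f\to C\to\sus X$ has $C$ with cohomology annihilated by a power of $f$, hence $C$ is an object of $\DD$ over $R/(f^N)$ for some $N$; by the induction hypothesis, $C$ lies in $\gen{\ess_\DD(R/\q_1\oplus\cdots\oplus R/\q_r)}{m'}{}$ for some primes $\q_j\supseteq (f)$ and some integer $m'$. Combining, $X\in\gen{\ess_\DD(R\oplus R/\q_1\oplus\cdots\oplus R/\q_r)}{d+1+m'}{}$, as required. The principal obstacle is the generic-projectivity step: while Rouquier's affine argument exploits regularity of $K\tensor_k K$ on the relevant locus for separable $K/k$, here one must argue in the completed enveloping algebra $\Lambda$, carefully controlling the interplay between completion and localization, using that $\Lambda$ is a quotient of a regular local ring and that $\ker(\mu)$ attains its expected codimension on a dense open of $\Spec\Lambda$.
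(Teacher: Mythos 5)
Your high-level strategy matches the paper's (reduce to the domain case via a prime filtration, run a dimension/Noetherian induction, use the completed tensor product $R\ctensor_kR$ in place of the enveloping algebra, and exploit separability coming from perfectness of the residue field), but the technical heart of the argument contains a gap that the paper handles with machinery your sketch omits.

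The crucial problem is the ``generic projectivity'' step and the triangle you build from it. You propose to take a finite $\Lambda_f$-projective resolution $P_\bullet\to R_f$ and then tensor with $X$, asserting that $X\tensor_R^{\mathbf L}\Lambda$ is a coproduct of copies of $R$. This is false for $\Lambda=R\ctensor_kR$: the \emph{completed} tensor product is not a free right $R$-module, only a completion of one, so a complex of free $\Lambda$-modules does not base-change to a complex of free $R$-modules, and the resulting object does not visibly lie in $\gen{\ess_\DD R}{d+1}{}$. Moreover, the triangle $X\to X_f\to C\to\sus X$ does not live in $\DD$: the cohomology of $X_f$ is finitely generated only over $R_f$, not over $R$, so neither $X_f$ nor $C$ is an object of $\Db(\mod R)$. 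Your final sentence correctly flags ``controlling the interplay between completion and localization'' as the principal obstacle, but the proposal does not resolve it.

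The paper circumvents both issues by introducing a Noether normalization $A=k[[x_1,\dots,x_d]]\subseteq R$ (via \cite[Definition-Proposition (1.20)]{Y}, which uses perfectness of $k$ to ensure $Q(A)\subseteq Q(R)$ is finite \emph{and separable}), together with the auxiliary ring $U=R\ctensor_kA$. The Koszul complex on the $U$-regular sequence $x'-x$ gives a finite free $U$-resolution of $R$, and because $U$ is flat over $A$ one can base change to obtain a complex $C$ of free $T$-modules representing $R\tensor_A^{\mathbf L}R$. Localization is performed at the multiplicatively closed set $W=A\setminus\{0\}$ (so a genuine $A$-localization, not a completion-vs-localization mixture), where separability of $Q(A)\subseteq Q(R)$ makes $R_W$ projective over $S_W$; this is what produces the non-unit $a\in A$ annihilating the obstruction class $\delta$ and hence the exact triangle $K\oplus R\to C\to R/(a)\to\sus(K\oplus R)$ in $\Db(\mod T)$. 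Finally, and this is the step your sketch cannot reproduce, after applying $X\tensor_R^{\mathbf L}(-)$ the middle term becomes $X\tensor_A^{\mathbf L}R$, which lies in $\gen{R}{d+1}{}$ simply because $A$ has global dimension $d$; one never needs $T$ or its free modules to be well-behaved over $R$. Without the detour through $A$ and $U$, the argument does not close.
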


\begin{proof}
Let us prove the theorem only for $\Db(\mod R)$; the other derived categories can be handled similarly.
We use induction on the Krull dimension $d:=\dim R$. 
If $d=0$, then $R$ is an Artinian ring, and the assertion follows from \cite[Proposition 7.37]{R2}. 
Assume $d\geq1$. 
By \cite[Theorem 6.4]{Ma}, we have a sequence
\[0=I_0\subseteq I_1 \subseteq\cdots\subseteq I_n=R\]
of ideals of $R$ such that for each $1\leq i\leq n$ one has $I_i/I_{i-1}\cong R/\mathfrak{p}_i$ with $\p_i\in\Spec R$.
Then every object $X$ of $\Db(\mod R)$ possesses a sequence
\[
0=XI_0\subseteq XI_1\subseteq\cdots\subseteq XI_n=X
\]
of $R$-subcomplexes.
Decompose this into exact triangles
$$
XI_{i-1} \to XI_i \to XI_i/XI_{i-1} \to \sus XI_{i-1},
$$
in $\Db(\mod R)$, and note that each $XI_i/XI_{i-1}$ belongs to $\Db(\mod R/\p_i)$.
Hence one may assume that $R$ is an integral domain. 
By \cite[Definition-Proposition (1.20)]{Y}, we can take a formal power series subring $A=k[[x_1,\dots,x_d]]$ of $R$ such that $R$ is a finitely generated $A$-module and that the extension $Q(A)\subseteq Q(R)$ of the quotient fields is finite and separable. 

\begin{claim}\label{as poly}
We have natural isomorphisms
\begin{align*}
R & \cong k[[x]][t]/(f(x,t))=k[[x,t]]/(f(x,t)),\\
S:=R\tensor_AR & \cong k[[x]][t,t']/(f(x,t),f(x,t'))=k[[x,t,t']]/(f(x,t),f(x,t')),\\
U:=R\ctensor_k A & \cong k[[x,t,x']]/(f(x,t)),\\
T:=R\ctensor_k R & \cong k[[x,t,x',t']]/(f(x,t),f(x',t')).
\end{align*}
Here $x=x_1,\dots,x_d$, $x'=x'_1,\dots,x'_d$, $t=t_1,\dots,t_n$, $t'=t'_1,\dots,t'_n$ are indeterminates over $k$, and $f(x,t)=f_1(x,t),\dots,f_m(x,t)$ are elements of $k[[x]][t]\subseteq k[[x,t]]$.
In particular, the rings $S,T,U$ are Noetherian commutative complete local rings.
\end{claim}

\begin{proof}[Proof of Claim \ref{as poly}]
Since $R$ is module-finite over $A$, we have $R=Ar_1+\cdots+Ar_n=A[r_1,\dots,r_n]$ for some elements $r_1,\dots,r_n\in R$.
There is an isomorphism
\begin{align*}
R & \cong A[t_1,\dots,t_n]/(f_1(x_1,\dots,x_d,t_1,\dots,t_n),\dots,f_m(x_1,\dots,x_d,t_1,\dots,t_n))\\
& =A[t]/(f(x,t)),
\end{align*}
where $t=t_1,\dots,t_n$ are indeterminates over $A$ and $f(x,t)=f_1(x,t),\dots,f_m(x,t)$ are elements of $A[t]$.
Since $A\subseteq R$ is an integral extension, each element $r_i\in R$ is integral over $A$: there exists an equality $r_i^{\ell_i}+a_{i1}r_i^{\ell_i-1}+\cdots+a_{i\ell_i}=0$ in $R$ with $a_{i1},\dots,a_{i\ell_i}\in A$. 
This implies that the element $t_i^{\ell_i}+a_{i1}t_i^{\ell_{i}-1}+\cdots+a_{i\ell_i}$ belongs to the ideal $(f(x,t))$ of $A[t]$. 
Hence we have
$$
R \cong A[t]/(f(x,t)) =A[[t]]/(f(x,t))=k[[x,t]]/(f(x,t)).
$$
Therefore
\begin{align*}
S &=A[t]/(f(x,t))\tensor_A A[t]/(f(x,t)) \cong A[t,t']/(f(x,t),f(x,t')) \\
& =A[[t,t']]/(f(x,t), f(x,t')) =k[[x,t,t']]/(f(x,t), f(x,t')),
\end{align*}
where $t'=t'_1,\dots,t'_n$ are indeterminates over $A$ which correspond to $t=t_1,\dots,t_n$.
We also have
$$
U \cong k[[x,t]]/(f(x,t))\ctensor_k k[[x]] \cong k[[x,t,x']]/(f(x,t)),
$$
where $x'=x'_1,\dots,x'_d$ are indeterminates over $k$ corresponding to $x=x_1,\dots,x_d$, and
$$
T \cong k[[x,t]]/(f(x,t))\ctensor_k k[[x,t]]/(f(x,t)) \cong k[[x,t,x',t']]/(f(x,t),f(x',t')).
$$
The proof of the claim is completed.
\renewcommand{\qedsymbol}{$\square$}
\end{proof}

There is a surjective ring homomorphism $\mu:S=R\tensor_AR\to R$ which sends $r\tensor r'$ to $rr'$. 
This makes $R$ an $S$-module.
Using Claim \ref{as poly}, we observe that $\mu$ corresponds to the map $k[[x,t,t']]/(f(x,t),f(x,t'))\to k[[x,t]]/(f(x,t))$ given by $t'\mapsto t$.
Taking the kernel, we have an exact sequence
\[0\to I\to S \xrightarrow{\mu}R\to 0\]
of finitely generated $S$-modules.
Along the injective ring homomorphism $A\to S$ sending $a\in A$ to $a\tensor 1=1\tensor a\in S$, we can regard $A$ as a subring of $S$.
Note that $S$ is a finitely generated $A$-module. 
Put $W=A\setminus\{0\}$. 
This is a multiplicatively closed subset of $A$, $R$ and $S$, and one can take localization $(-)_W$.

\begin{claim}\label{our claim}
The $S_W$-module $R_W$ is projective. 
\end{claim}

\begin{proof}[Proof of Claim \ref{our claim}]
The localization $R_W$ is an integral domain which is a module-finite extension of the field $A_W=Q(A)$.
This implies that $R_W$ is also a field (cf. \cite[Lemma 1 in \S 9]{Ma}), and we have $R_W=Q(R)$.
There are natural isomorphisms $S_W\cong(R\tensor_AR)_W\cong R_W\tensor_{A_W}R_W$.
As $Q(R)$ is a separable extension of $Q(A)$, the claim follows. 
\renewcommand{\qedsymbol}{$\square$}
\end{proof}

There are ring epimorphisms
\begin{alignat*}{2}
\alpha & :U\to R, & \quad & r\ctensor a \mapsto ra,\\
\beta & :T\to S, & & r\ctensor r' \mapsto r\tensor r',\\
\gamma & :T\to R, & & r\ctensor r' \mapsto rr'.
\end{alignat*}
Identifying the rings $R$, $S$, $T$ and $U$ with the corresponding residue rings of formal power series rings made in Claim \ref{as poly}, we see that $\alpha,\beta$ are the maps given by $x'\mapsto x$, and $\gamma$ is the map given by $x'\mapsto x$ and $t'\mapsto t$.
Note that $\gamma=\mu\beta$.
The map $\alpha$ is naturally a homomorphism of $(R,A)$-bimodules, and $\beta, \gamma$ are naturally homomorphisms of $(R,R)$-bimodules.
The ring $R$ has the structure of a finitely generated $U$-module through $\alpha$.
The Koszul complex on the $U$-regular sequence $x'-x$ gives a free resolution of the $U$-module $R$:
\begin{equation}\label{gasaf}
0 \to U \to U^{\oplus d} \to U^{\oplus\binom{d}{2}} \to \cdots \to U^{\oplus\binom{d}{2}} \to U^{\oplus d} \xrightarrow{x'-x} U\xrightarrow{\alpha} R\to 0.
\end{equation}
This is an exact sequence of $(R,A)$-bimodules. 
Since the natural homomorphisms
\begin{align*}
& A\cong k[[x']]\to k[[x']][x,t]/(f(x,t)),\\
& k[[x']][x,t]/(f(x,t))\to k[[x']][[x,t]]/(f(x,t))\cong U
\end{align*}
are flat, so is the composition. 
Therefore $U$ is flat as a right $A$-module. 
The exact sequence \eqref{gasaf} gives rise to a chain map $\eta:F\to R$ of $U$-complexes, where
$$
F=(0 \to U \to U^{\oplus d} \to U^{\oplus\binom{d}{2}} \to \cdots \to U^{\oplus\binom{d}{2}} \to U^{\oplus d} \xrightarrow{x'-x} U \to 0)
$$
is a complex of finitely generated free $U$-modules.
By Claim \ref{as poly}, we have isomorphisms
\begin{align*}
U\tensor_AR 
& \cong U\tensor_AA[t]/(f(x,t)) \cong U[t']/(f(x',t'))\cong U[[t']]/(f(x',t'))\\
& \cong (k[[x,t,x']]/(f(x,t)))[[t']]/(f(x',t'))\cong k[[x,t,x',t']]/(f(x,t), f(x',t')) \cong T.
\end{align*}
Note from \cite[Exercise 10.6.2]{W} that $R\tensor^{\bf L}_{A}R$ is an object of $\Dm(R\text{-}\mathsf{Mod}\text{-}R)=\Dm(\Mod R\tensor_kR)$.
(Here, $R\text{-}\mathsf{Mod}\text{-}R$ denotes the category of $(R,R)$-bimodules, which can be identified with $\Mod R\tensor_kR$.)
There are isomorphisms
\begin{align*}
R\tensor^{\bf L}_{A} R &\cong F\tensor_AR \\
&\cong(0 \to U\tensor_AR \to (U\tensor_AR)^{\oplus d} \to \cdots \to (U\tensor_AR)^{\oplus d} \xrightarrow{x'-x} U\tensor_AR\to 0)\\
&\cong (0 \to T \to T^{\oplus d} \to \cdots \to T^{\oplus d} \xrightarrow{x'-x} T\to 0)=:C
\end{align*}
in $\Dm(\Mod R\tensor_kR)$.
Note that $C$ can be regarded as an object of $\Db(\mod T)$.
Taking the tensor product $\eta\tensor_AR$, one gets a chain map $\lambda:C\to S$ of $T$-complexes.
Thus, one has a commutative diagram
\[\begin{CD}
K @>>> C @>>> R @>{\delta}>> \sus K \\
@VVV @V{\lambda}VV @| @V{\xi}VV \\
I @>>> S @>{\mu}>> R @>{\varepsilon}>> \sus I
\end{CD}\]
of exact triangles in $\Db(\mod{T})$.

\begin{claim}\label{ann}
There exists an element $a\in W$ such that $\delta\cdot (1\ctensor a)=0$ in $\homo{\Db(\mod T)}{R}{\sus K}$.
One can choose it as a non-unit element of $A$, if necessary.
\end{claim}

\begin{proof}[Proof of Claim \ref{ann}]
(1) By \cite[Theorem 10.7.4]{W} and Claim \ref{our claim}, we have
\begin{align*}
\varepsilon\tensor_AA_W & \in \homo{\Db(\mod S\tensor_AA_W)}{R\tensor_AA_W}{\sus I\tensor_AA_W} \\
& \cong\ext{S\tensor_AA_W}{1}{R\tensor_AA_W}{I\tensor_AA_W}=0,
\end{align*}
whence $\varepsilon\tensor_AA_W=0$ in $\Db(\mod S\tensor_AA_W)$.
Sending this equality by the exact functor $\Db(\mod S\tensor_AA_W)\to \Db(\mod T\tensor_AA_W)$ induced by the ring epimorphism $\beta\tensor_AA_W:T\tensor_AA_W\to S\tensor_AA_W$, we have $\varepsilon\tensor_AA_W=0$ in $\Db(\mod T\tensor_AA_W)$. 

(2) We have isomorphisms in $\Dm(\Mod R\tensor_kA_W)$
\begin{align*}
& C\tensor_AA_W \cong F\tensor_AR\tensor_AA_W \cong F_W\tensor_{A_W}R_W,\\
& S\tensor_AA_W = R\tensor_AR\tensor_AA_W \cong R_W\tensor_{A_W}R_W,
\end{align*}
which fit into a commutative diagram in $\Dm(\Mod R\tensor_kA_W)$
$$
\begin{CD}
C\tensor_AA_W @>{\lambda\tensor_AA_W}>> S\tensor_AA_W \\
@V{\cong}VV @V{\cong}VV \\
F_W\tensor_{A_W}R_W @>{\eta_W\tensor_{A_W}R_W}>> R_W\tensor_{A_W}R_W.
\end{CD}
$$
The complex $F_W$ is quasi-isomorphic to $R_W$ and $A_W$ is a field.
Hence the morphism $\eta_W\tensor_{A_W}R_W$ above is an isomorphism in $\Dm(\Mod R\tensor_kA_W)$, and so is $\lambda\tensor_AA_W$. 
Since $\lambda\tensor_AA_W$ is a morphism in $\Db(\mod T\tensor_AA_W)$, we have that $\lambda\tensor_AA_W$ is an isomorphism in $\Db(\mod T\tensor_AA_W)$.

(3) Now we have a commutative diagram of exact triangles in $\Db(\mod T\tensor_AA_W)$:
$$
\begin{CD}
K\tensor_AA_W @>>> C\tensor_AA_W @>>> R\tensor_AA_W @>{\delta\tensor_AA_W}>> \sus K\tensor_AA_W \\
@V{\sus^{-1}\xi\tensor_AA_W}VV @V{\lambda\tensor_AA_W}V{\cong}V @| @V{\xi\tensor_AA_W}VV \\
I\tensor_AA_W @>>> S\tensor_AA_W @>{\mu\tensor_AA_W}>> R\tensor_AA_W @>{\varepsilon\tensor_AA_W}>{0}> \sus I\tensor_AA_W.
\end{CD}
$$
We see that $\xi\tensor_AA_W$ is an isomorphism in $\Db(\mod T\tensor_AA_W)$, so that $\delta\tensor_AA_W=0$ in $\Db(\mod T\tensor_AA_W)$.
Let $V$ be the image of $W$ by the ring homomorphism $R\to T$ given by $r\mapsto 1\ctensor r$. 
Clearly, $V$ is a multiplicatively closed subset of $T$, and we have $M\tensor_AA_W\cong M_W=M_V\cong M\tensor_TT_V$ for every $T$-module $M$. 
We get isomorphisms 
\begin{align*}
& \homo{\Db(\mod T\tensor_AA_W)}{R\tensor_AA_W}{\sus K\tensor_AA_W} \\
\cong\ & {\rm H}^1(\RHom_{T\tensor_AA_W}(R\tensor_AA_W, K\tensor_AA_W)) \\
\cong\ & {\rm H}^1(\RHom_{T_V}(R\tensor_TT_V, K\tensor_TT_V)) \\
\cong\ & {\rm H}^1(\RHom_T(R,K))\tensor_TT_V \\
\cong\ & \homo{\Db(\mod T)}{R}{\sus K}\tensor_TT_V \\
\cong\ & \homo{\Db(\mod T)}{R}{\sus K}_V.
\end{align*}
The first isomorphism follows from \cite[Theorem 10.7.4]{W}.
Since $T$ is Noetherian by Claim \ref{as poly} and $R$ is a finitely generated $T$-module, the third isomorphism is obtained by \cite[Exercise 10.8.4]{W}.
Through the above isomorphisms the element $\delta\tensor_AA_W\in\homo{\Db(\mod T\tensor_AA_W)}{R\tensor_AA_W}{\sus K\tensor_AA_W}$ corresponds to the element $\frac{\delta}{1}\in\homo{\Db(\mod T)}{R}{\sus K}_V$. 
As $\delta\tensor_AA_W=0$ in $\Db(\mod T\tensor_AA_W)$, we have $\frac{\delta}{1}=0$. 
Consequently, we find an element $b\in V$ with $\delta\cdot b=0$.
Writing $b=1\ctensor a$ with $a\in W$, we get $\delta\cdot(1\ctensor a)=0$.
If $a$ is a unit of $A$, then $\delta=0$ holds.
Taking any non-unit $a'\in A$ belonging to $W$, we have $\delta\cdot(1\ctensor a')=0$.
\renewcommand{\qedsymbol}{$\square$}
\end{proof}

Let $a\in W$ be a non-unit element of $A$ as in Claim \ref{ann}.
Since we regard $R$ as a $T$-module through the homomorphism $\gamma$, we have an exact sequence $0\to R\xrightarrow{1\ctensor a}R\to R/(a)\to 0$.  
The octahedral axiom makes a diagram in $\Db(\mod T)$ 
\[\begin{CD}
R @>{1\ctensor a}>> R @>>> R/(a) @>>> \sus R \\
@| @V{\delta}VV @VVV @| \\
R @>0>> \sus K @>>> \sus K\oplus \sus R @>>> \sus R \\
@VVV @| @VVV @VVV \\
R @>{\delta}>> \sus K @>>> \sus C @>>> \sus R \\
@VVV @VVV @| @VVV \\
R/(a) @>>> \sus K\oplus \sus R @>>> \sus C @>>> \sus R/(a)
\end{CD}\]
with the bottom row being an exact triangle.
Rotating it, we obtain an exact triangle
\[
K\oplus R\to C\to R/(a)\to \sus (K\oplus R)
\]
in $\Db(\mod T)$.
The exact functor $\Db(\mod T)\to \Dm(\Mod R\tensor_kR)$ induced by the canonical ring homomorphism $R\tensor_kR\to T$ sends this to an exact triangle 
\begin{equation}\label{kii}
K\oplus R\to R\tensor^{\bf L}_{A}R\to R/(a)\to \sus(K\oplus R)
\end{equation}
in $\Dm(\Mod R\tensor_kR)$.
As $R$ is a local domain and $a$ is a non-zero element of the maximal ideal of $R$, we have $\dim R/(a)=d-1<d$. 
Hence one can apply the induction hypothesis to the ring $R/(a)$, and sees that
$$
\Db(\mod R/(a))=\gen{R/\p_1\oplus\cdots\oplus R/\p_h}{m}{R/(a)}
$$
for some integer $m\ge1$ and some prime ideals $\p_1,\cdots,\p_h$ of $R$ that contain $a$.
Now, let $X$ be any object of $\Db(\mod R)$.
Applying the exact functor $X\tensor^{\bf L}_R-$ to \eqref{kii} gives an exact triangle in $\Dm(\Mod R)$
\begin{equation}\label{main triangle}
(X\tensor^{\bf L}_{R}K)\oplus X \to X\tensor^{\bf L}_{A}R \to X\tensor^{\bf L}_{R}R/(a) \to \sus((X\tensor^{\bf L}_{R}K)\oplus X).
\end{equation}
As $A$ has finite global dimension, we observe that $X\tensor^{\bf L}_{A}R$ is homologically bounded.
Since $X\tensor^{\bf L}_{R}R/(a)$ is isomorphic to $X\tensor_RE$, where $E=(0\to R\xrightarrow{a}R\to0)$ is a Koszul complex, $X\tensor^{\bf L}_{R}R/(a)$ is also homologically bounded.
It is clear that the homologies of $X\tensor^{\bf L}_{A}R$ and $X\tensor^{\bf L}_{R}R/(a)$ are finitely generated right $R$-modules.
Hence $X\tensor^{\bf L}_{A}R$ and $X\tensor^{\bf L}_{R}R/(a)$ belong to $\Db(\mod R)$  (cf. \cite[Exercise 10.4.6]{W}), and therefore so does $(X\tensor^{\bf L}_{R}K)\oplus X$ by \eqref{main triangle}.
Thus \eqref{main triangle} is an exact triangle in $\Db(\mod R)$. 
Corollary \ref{krskssn}(2) implies that $X$, as a complex of right $A$-modules, belongs to $\gen{A}{d+1}{A}$, whence $X\tensor^{\bf L}_{A}R$ is in $\gen{R}{d+1}{R}$.
Note that $X\tensor^{\bf L}_{R}R/(a)$ is an object of $\Db(\mod R/(a))=\gen{R/\p_1\oplus\cdots\oplus R/\p_h}{m}{R/(a)}$.
As an object of $\Db(\mod R)$, the complex $X\tensor^{\bf L}_{R}R/(a)$ belongs to $\gen{R/\p_1\oplus\cdots\oplus R/\p_h}{m}{R}$. 
We observe from \eqref{main triangle} that $X$ is in $\gen{R\oplus R/\p_1\oplus\cdots\oplus R/\p_h}{d+1+m}{R}$.
Thus we obtain $\Db(\mod R)=\gen{R\oplus R/\p_1\oplus\cdots\oplus R/\p_h}{d+1+m}{}$.
(As $R$ is a domain, the zero ideal of $R$ is a prime ideal.)
\end{proof}

Here, let us make an easy observation on dimensions of derived categories over localized rings.

\begin{lemma}\label{ess dense of localization}
Let $R$ be a commutative Noetherian ring and $W$ a multiplicatively closed subset of $R$.
For $\star\in\{-,\mathrm{b}\}$ the following hold.
\begin{enumerate}[\rm(1)]
\item
The exact functor
$$
(-)_W=(-)\tensor_R^{\bf L}R_W: \Da(\mod R)\to\Da(\mod R_W)
$$
defined by localization by $W$ is dense.
\item
Suppose that $\Da(\mod R)=\gen{\XX}{n}{}$ for some subcategory $\XX$ of $\Da(\mod R)$ and some integer $n\ge0$.
Then it holds that $\Da(\mod R_W)=\gen{\XX_W}{n}{}$, where $\XX_W$ denotes the subcategory of $\Da(\mod R_W)$ consisting of the objects $X_W$ with $X\in\XX$.
\item
One has $\dim\Da(\mod R_W)\le\dim\Da(\mod R)$.
\end{enumerate}
\end{lemma}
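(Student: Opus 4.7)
The plan is to establish (1) --- essential surjectivity of the functor $(-)_W$, which is the precise meaning of ``dense'' here --- and then obtain (2) and (3) formally from it. Since $R_W$ is flat over $R$, the derived functor $(-)\tensor_R^{\bf L}R_W$ agrees with termwise localization $(-)\tensor_R R_W$, which visibly preserves both $\Db(\mod -)$ and $\Dm(\mod -)$, so the functor in the statement makes sense.

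For (1), given $Y\in\Da(\mod R_W)$, I will construct a complex $M$ of finitely generated $R$-modules in $\Da(\mod R)$ together with an inclusion $M\hookrightarrow Y$ that becomes an isomorphism after localization. Pick $b$ with $Y^i=0$ for $i>b$ and build the submodules $M^i\subseteq Y^i$ by downward induction on $i$, setting $M^i=0$ for $i>b$. Suppose $M^{i+1}\subseteq Y^{i+1}$ is finitely generated over $R$ with $(M^{i+1})_W=Y^{i+1}$. Pick $R_W$-generators $y_1,\dots,y_n$ of $Y^i$; each $d^i(y_j)$ equals $m_j/w_j$ for some $m_j\in M^{i+1}$ and $w_j\in W$, and a common denominator $w=w_1\cdots w_n$ lets us replace each $y_j$ by $wy_j$ without changing the $R_W$-submodule they generate. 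Let $M^i$ be the $R$-submodule of $Y^i$ generated by the rescaled elements $wy_j$; then $d^i(M^i)\subseteq M^{i+1}$ and $(M^i)_W=Y^i$. The resulting complex $(M^\bullet,d^\bullet|_M)$ lies in $\Da(\mod R)$ and its inclusion into $Y$ is a termwise localization isomorphism, hence an isomorphism in $\Da(\mod R_W)$.

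Part (2) is immediate from (1) combined with Lemma \ref{essentially dense}: being dense is strictly stronger than being essentially dense in the sense of that lemma (which only demands being a direct summand of $FX$), so from $\Da(\mod R)=\gen{\XX}{n}{}$ we deduce $\Da(\mod R_W)=\gen{(-)_W\XX}{n}{}=\gen{\XX_W}{n}{}$. Part (3) follows from (2) applied to the singleton $\XX=\{G\}$ where $G$ is an object realizing $\dim\Da(\mod R)$.

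The only nontrivial step is the denominator-clearing construction in (1); everything else is essentially formal. The key point that makes the induction go through is that each $Y^i$ is finitely generated over $R_W$, which permits a single common denominator to clear all differentials coming out of the chosen generators at each stage.
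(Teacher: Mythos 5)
Your proof is correct, and your handling of parts (2) and (3) via Lemma \ref{essentially dense} (noting that dense implies essentially dense) matches the paper exactly. For part (1), both you and the paper run a downward denominator-clearing induction, but the constructions differ in flavor. The paper chooses, for each $i$, an abstract finitely generated $R$-module $X^i$ together with an isomorphism $f^i:X^i_W\to Y^i$, pulls each differential $d^i$ back to a map $e^i:X^i\to X^{i+1}$ with denominator $w_i$, and then must introduce a further $v_i\in W$ to annihilate $e^{i+1}e^i$ before setting $\partial^i=v_ie^i$; finally it patches the $f^i$ into a chain isomorphism by accumulating the scaling factors $v_jw_j$. Your version instead realizes the lift as honest $R$-submodules $M^i\subseteq Y^i$ with $(M^i)_W=Y^i$, taking the differentials to be restrictions of those of $Y$. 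This sidesteps the two bookkeeping steps entirely: the relation $d^{i+1}d^i=0$ is inherited from $Y$, and the chain map $M\hookrightarrow Y$ is automatically the localization isomorphism, with no correcting factors needed. Both are correct denominator-clearing arguments; yours is a mild but genuine streamlining that trades the freedom of choosing abstract lifts for the rigidity (and automatic compatibility) of submodules of a given complex.
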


\begin{proof}
The assertions (2) and (3) follow from (1) and Lemma \ref{essentially dense}.
Let us prove the assertion (1).
Let
$$
Y=(\cdots \xrightarrow{d^{t-3}} Y^{t-2} \xrightarrow{d^{t-2}} Y^{t-1} \xrightarrow{d^{t-1}} Y^t \to 0)
$$
be a complex of finitely generated $R_W$-modules.
Then, for each $i\le t$, we can find a finitely generated $R$-module $X^i$ and an isomorphism $f^i:X^i_W\to Y^i$ of $R_W$-modules.
Since $R$ is Noetherian and $X^i$ is finitely generated, there are isomorphisms
$$
\Hom_{R_W}(Y^i,Y^{i+1}) \cong \Hom_{R_W}(X^i_W,X^{i+1}_W) \cong\Hom_R(X^i,X^{i+1})_W,
$$
which correspond $d^i\in\Hom_{R_W}(Y^i,Y^{i+1})$ to $\frac{e^i}{w_i}\in\Hom_R(X^i,X^{i+1})_W$ for some $e^i\in\Hom_R(X^i,X^{i+1})$ and $w_i\in W$.
We have $d^i(f^i(\frac{x}{w}))=f^{i+1}(\frac{e^i(x)}{w_iw})$ for $\frac{x}{w}\in X^i_W$.
The equality $d^{i+1}d^i=0$ in $\Hom_{R_W}(Y^i,Y^{i+2})$ shows that there is an element $v_i\in W$ satisfying $v_ie^{i+1}e^i=0$ in $\Hom_R(X^i,X^{i+2})$.
Setting $\partial^i=v_ie^i$, we have $\partial^{i+1}\partial^i=0$.
Hence the sequence
$$
X:=(\cdots \xrightarrow{\partial^{t-3}} X^{t-2} \xrightarrow{\partial^{t-2}} X^{t-1} \xrightarrow{\partial^{t-1}} X^t \to 0)
$$
of homomorphisms of $R$-modules is a complex.
Put $g^i=v_{t-1}w_{t-1}v_{t-2}w_{t-2}\cdots v_iw_if^i$.
We observe that the maps $g^i$ form an isomorphism $g:X_W\to Y$ of $R_W$-complexes.
\end{proof}

Now, we make sure that the proof of Theorem \ref{main result} also gives a ring-theoretic proof of the affine case of Rouquier's theorem.
Actually, we obtain a more detailed result as follows.
Recall that a commutative ring $R$ is said to be {\em essentially of finite type} over a field $k$ if $R$ is a localization of a finitely generated $k$-algebra.
Of course, every finitely generated $k$-algebra is essentially of finite type over $k$.

\begin{theorem}\label{rqrr}
\begin{enumerate}[\rm (1)]
\item
Let $R$ be a finitely generated algebra over a perfect field.
Then there exist a finite number of prime ideals $\p_1,\dots,\p_n\in\Spec R$ and an integer $m\ge 1$ such that
$$
\DD=\gen{\ess_{\DD}(R/\p_1\oplus\cdots\oplus R/\p_n)}{m}{}
$$
for each $\DD\in\{\Db(\mod R),\Db(\Mod R),\Dm(\mod R),\Dm(\Mod R)\}$.
\item
Let $R$ be a commutative ring which is essentially of finite type over a perfect field.
Then there exist a finite number of prime ideals $\p_1,\dots,\p_n\in\Spec R$ and an integer $m\ge 1$ such that
$$
\DD=\gen{\ess_{\DD}(R/\p_1\oplus\cdots\oplus R/\p_n)}{m}{}
$$
for each $\DD\in\{\Db(\mod R),\Dm(\mod R)\}$.
In particular, one has $\Db(\mod R)=\gen{R/\p_1\oplus\cdots\oplus R/\p_n}{m}{}$.
\end{enumerate}
\end{theorem}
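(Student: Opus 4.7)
The plan is to adapt the proof of Theorem \ref{main result} to the non-complete setting by using the ordinary tensor product $\tensor_k$ in place of the completed one $\ctensor_k$, and then to derive part (2) from part (1) by localization.

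For part (1) I would run induction on $d = \dim R$ following exactly the scheme of the proof of Theorem \ref{main result}. The base case $d = 0$ is handled by Rouquier's bound in terms of Loewy length \cite[Proposition 7.37]{R2}, since a finitely generated $k$-algebra of Krull dimension zero is Artinian. The prime-ideal filtration argument reduces to the case where $R$ is a domain. In the inductive step, the classical Noether normalization lemma produces $A = k[x_1,\dots,x_d] \subseteq R$ over which $R$ is a finite module, and since $k$ is perfect the induced finite extension $Q(A) \subseteq Q(R)$ is separable. The crucial point is that the rings $S = R \tensor_A R$, $U = R \tensor_k A$, and $T = R \tensor_k R$ are automatically Noetherian because $R$ and $A$ are finitely generated $k$-algebras; this is exactly the simplification relative to the complete setting. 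The analogue of Claim \ref{as poly} is immediate from a presentation $R \cong k[x,t]/(f(x,t))$ with $f(x,t)$ monic in $t$ over $k[x]$. The separability claim that $R_W$ is projective over $S_W$ at $W = A \setminus \{0\}$ goes through unchanged, and the Koszul resolution of $R$ over $U$ on the regular sequence $x' - x$, the chain maps $\alpha, \beta, \gamma, \lambda, \xi, \delta, \varepsilon$, the vanishing $\delta \cdot (1 \tensor a) = 0$ for a non-unit $a \in A$, the octahedral-axiom diagram yielding the exact triangle $K \oplus R \to R \tensor^{\mathbf{L}}_A R \to R/(a) \to \sus(K \oplus R)$ in $\Dm(\Mod R \tensor_k R)$, and the concluding argument via $X \tensor^{\mathbf{L}}_R -$ combined with the inductive hypothesis applied to $R/(a)$ all transfer verbatim with $\ctensor$ replaced by $\tensor$, simultaneously for all four choices of $\DD$.

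For part (2), write $R = S_W$ with $S$ a finitely generated $k$-algebra and $W$ a multiplicatively closed subset of $S$. Apply part (1) to obtain prime ideals $\p_1,\dots,\p_n \in \Spec S$ and an integer $m \ge 1$ with $\Db(\mod S) = \gen{\ess_{\Db(\mod S)}(S/\p_1 \oplus \cdots \oplus S/\p_n)}{m}{}$, and similarly for $\Dm(\mod S)$. By Lemma \ref{ess dense of localization}(2) one then obtains $\Db(\mod R) = \gen{\XX_W}{m}{}$, where $\XX = \ess_{\Db(\mod S)}(\bigoplus_i S/\p_i)$. Using the explicit description of $\ess$ in Remark \ref{iin} and the exactness of localization, I would check that $\XX_W \subseteq \ess_{\Db(\mod R)}(\bigoplus_i (S/\p_i)_W)$. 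For each $i$ one has either $\p_i \cap W \neq \varnothing$, in which case $(S/\p_i)_W = 0$, or $\p_i \cap W = \varnothing$, in which case $\q_i := \p_i R$ is a prime of $R$ and $(S/\p_i)_W \cong R/\q_i$. Discarding the zero terms and relabelling yields the stated presentation. The argument for $\Dm(\mod R)$ is identical.

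The main obstacle is the careful bookkeeping in part (1): each isomorphism and identification used in the proof of Theorem \ref{main result} must be rechecked after $\ctensor_k$ is replaced by $\tensor_k$, including the flatness of $U$ over $A$, the isomorphism $U \tensor_A R \cong T$, the interpretation of $C = F \tensor_A R$ as a complex of finitely generated $T$-modules, and the $\RHom$-localization step at $W$ used to conclude $\delta \cdot (1 \tensor a) = 0$. All of these are in fact easier in the affine case because every ring in sight is already finitely presented over a polynomial ring, so no genuinely new ideas are required beyond a careful transcription and an invocation of the localization lemma for part (2).
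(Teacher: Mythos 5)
Your proposal matches the paper's proof exactly: the paper likewise proves part (1) by replacing the completed tensor product $\ctensor_k$ with $\tensor_k$ and formal power series rings with polynomial rings throughout the proof of Theorem \ref{main result} (invoking Noether normalization for the existence of $A = k[x_1,\dots,x_d]$, and a dimension theorem for affine domains to get $\dim R/(a) = d-1$ in place of the local-ring argument used in the complete case), and derives part (2) from part (1) via Lemma \ref{ess dense of localization}(2). Your explicit verification in part (2) that localizing the primes $\p_i$ yields either zero or $R/\p_iR$ is a detail the paper leaves implicit, and is correct.
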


\begin{proof}
(1) In the proof of Theorem \ref{main result}, make the following replacement.
\begin{alignat*}{2}
[[ & \quad & \rightsquigarrow & \quad [ \\
]] & \quad & \rightsquigarrow & \quad ] \\
\widehat\otimes & \quad & \rightsquigarrow & \quad \otimes
\end{alignat*}
Then the assertion follows.
Indeed, the existence of a corresponding subring $A=k[x_1,\dots,x_d]$ of $R$ is seen by \cite[Corollary 16.18]{E}.
The equality $\dim R/(a)=d-1$ in the case where $a$ is not a unit of $R$ is shown by \cite[Corollary 13.11]{E}.

(2) This follows from (1) and Lemma \ref{ess dense of localization}(2).
\end{proof}

Now the following result due to Rouquier (cf. \cite[Theorem 7.38]{R2}) is immediately recovered by Theorem \ref{rqrr}(2).

\begin{corollary}[Rouquier]\label{rqrc}
Let $R$ be a commutative ring essentially of finite type over a perfect field.
Then the derived category $\Db(\mdl R)$ has finite dimension.
\end{corollary}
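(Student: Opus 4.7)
The plan is to reduce Corollary \ref{rqrc} directly to Theorem \ref{rqrr}(2). Specifically, Theorem \ref{rqrr}(2) already gives us, for any commutative ring $R$ essentially of finite type over a perfect field, the existence of finitely many prime ideals $\p_1,\dots,\p_n\in\Spec R$ and an integer $m\ge 1$ such that
\[
\Db(\mod R)=\gen{R/\p_1\oplus\cdots\oplus R/\p_n}{m}{}.
\]
This exhibits $\Db(\mod R)$ as being built from a single object $G:=R/\p_1\oplus\cdots\oplus R/\p_n$ in $m$ steps.

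Having this, I would simply invoke Definition \ref{dtcat}(3): since there exists $G\in\Db(\mod R)$ with $\gen{G}{m}{}=\Db(\mod R)$, we obtain
\[
\dim\Db(\mod R)\le m-1<\infty,
\]
which is the claim. There is no further work: Theorem \ref{rqrr}(2) packages both the existence of the generator and the bound on the number of cones.

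Consequently, there is no genuine obstacle here; the content of the corollary is entirely carried by Theorem \ref{rqrr}(2), which in turn rests on Theorem \ref{rqrr}(1) (the finitely generated case) combined with Lemma \ref{ess dense of localization}(2) to pass to localizations. The real technical work — the inductive use of the complete tensor product replaced by the ordinary tensor product $R\otimes_k R$, the Koszul resolution argument, and the reduction $\dim R/(a)=d-1$ via \cite[Corollary 13.11]{E} — was already absorbed into the proof of Theorem \ref{rqrr}. So the proof of Corollary \ref{rqrc} is, as the authors indicate, a one-line consequence.
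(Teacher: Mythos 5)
Your proposal is correct and matches the paper exactly: the authors likewise state that Corollary \ref{rqrc} is immediately recovered from Theorem \ref{rqrr}(2), since the latter produces an object $G=R/\p_1\oplus\cdots\oplus R/\p_n$ with $\Db(\mod R)=\gen{G}{m}{}$, and Definition \ref{dtcat}(3) then gives $\dim\Db(\mod R)\le m-1<\infty$. No further argument is needed.
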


\begin{remark}\label{vdb}
In Corollary \ref{rqrc}, the assumption that the base field is perfect can be removed; see \cite[Proposition 5.1.2]{KV}.
We do not know whether we can also remove the perfectness assumption of the residue field in Theorem \ref{main result}.
It seems that the techniques in the proof of \cite[Proposition 5.1.2]{KV} do not directly apply to that case.
\end{remark}


\section{Applications}\label{app}

In this section, we give several applications of our theorem obtained in the previous section.
Throughout this section, let $R$ be a commutative Noetherian ring.
We will apply Theorem \ref{main result} to the singularity category of the ring $R$, which has been defined by Orlov \cite{Or1}.
This is the same as the stable derived category in the sense of Buchweitz \cite{Bu}.
Let us recall its definition together with that of the stable category of Cohen-Macaulay modules.

\begin{definition}
\begin{enumerate}[(1)]
\item
The {\em singularity category} $\Dsg(R)$ of $R$, which is also called the {\em stable derived category} of $R$, is defined to be the Verdier quotient of $\Db(\mod R)$ by the subcategory consisting of all perfect $R$-complexes.
(Recall that a {\em perfect complex} is by definition a bounded complex of finitely generated projective modules.)
\item
Let $R$ be a {\em Gorenstein} ring, that is, $R$ has finite injective dimension as an $R$-module.
The {\em stable category} $\sCM(R)$ {\em of Cohen-Macaulay modules} over $R$ is defined as follows.
The objects are the (maximal) {\em Cohen-Macaulay} $R$-modules, i.e., the finitely generated $R$-modules $M$ with $\Ext_R^i(M,R)=0$ for all $i>0$.
The hom-set $\Hom_{\sCM(R)}(M,N)$ is the residue $R$-module of $\Hom_R(M,N)$ by the $R$-submodule consisting of all $R$-homomorphisms from $M$ to $N$ that factor through some finitely generated projective $R$-modules.
\end{enumerate}
\end{definition}

Both the singularity category and the stable category of Cohen-Macaulay modules are known to be triangulated.
The following remarkable result is shown in \cite[Theorem 4.4.1]{Bu}.

\begin{theorem}[Buchweitz]\label{mikan}
Let $R$ be a Gorenstein ring.
Then there is a triangle equivalence $\sCM(R)\cong\Dsg(R)$.
\end{theorem}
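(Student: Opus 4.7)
The plan is to construct a triangle functor $F\colon\sCM(R)\to\Dsg(R)$ sending a Cohen--Macaulay module $M$ to its stalk complex in degree zero, and to show $F$ is an equivalence. Well-definedness on morphisms is immediate: if $f\colon M\to N$ factors through a finitely generated projective $P$, then in $\Dsg(R)$ it factors through $P$, which is perfect and hence the zero object. To see $F$ is triangulated, recall that the suspension in $\sCM(R)$ is $\Sigma M=\mathrm{coker}(M\hookrightarrow I_M)$ for an injective envelope $I_M$; since $R$ is Gorenstein, $I_M$ has finite projective dimension and is therefore perfect. The short exact sequence $0\to M\to I_M\to \Sigma M\to 0$ thus yields $F(\Sigma M)\cong \sus F(M)$ in $\Dsg(R)$, and distinguished triangles in $\sCM(R)$ arising from short exact sequences of CM modules descend to distinguished triangles in $\Dsg(R)$.

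For essential surjectivity, let $X\in\Dsg(R)$ be represented by a bounded complex of finitely generated modules. A standard truncation-plus-syzygy argument (using that $R$ is Noetherian, so finitely generated projective resolutions exist) shows $X$ is isomorphic in $\Dsg(R)$ to $\sus^k M$ for some finitely generated $R$-module $M$ and integer $k$; passing from $M$ to a high syzygy $\Omega^n M$, which satisfies $\Omega^n M\cong \sus^{-n}M$ in $\Dsg(R)$, we may take $n$ arbitrarily large. Because $R$ is Gorenstein, $r:=\mathrm{id}_R R$ is finite, so for $n>r$ one has $\Ext^i_R(\Omega^n M, R)\cong \Ext^{i+n}_R(M, R)=0$ for every $i>0$; hence $\Omega^n M$ is Cohen--Macaulay. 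Absorbing the shift via $\sus^j=F(\Sigma^j)$, we conclude $X$ lies in the essential image of $F$.

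For full faithfulness, the task is to show that the natural map $\underline{\Hom}_R(M,N)\to\Hom_{\Dsg(R)}(M,N)$ is bijective for Cohen--Macaulay $M, N$. Using the Verdier calculus of fractions, a morphism in $\Dsg(R)$ from $M$ to $N$ is a roof $M\xleftarrow{s}Z\xrightarrow{f}N$ with the cone of $s$ perfect. By taking Cohen--Macaulay approximations of $Z$ and invoking the Ext-vanishing $\Ext^i_R(M,P)=0$ for $i>0$, $P$ perfect and $M$ Cohen--Macaulay, one can replace the roof by an honest homomorphism $M\to N$; two such roofs give the same morphism in $\Dsg(R)$ precisely when the corresponding honest maps differ by one factoring through a projective, which yields the desired bijection.

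The main obstacle is the full faithfulness step. One must rigorously execute the calculus-of-fractions reduction, verifying both injectivity (a homomorphism of CM modules that becomes zero in $\Dsg(R)$ must factor through a projective) and surjectivity (every roof straightens to an honest morphism) of the comparison map. Both directions rely essentially on the Gorenstein hypothesis: it ensures that the subcategory of Cohen--Macaulay modules is both resolving and coresolving inside $\mod R$, so that morphisms in the Verdier quotient can always be pushed into this subcategory via the Ext-vanishing just cited.
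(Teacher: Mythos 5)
The paper does not prove this theorem; it cites it as \cite[Theorem 4.4.1]{Bu}. So there is no in-paper proof to compare against, and I will assess the sketch directly.

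Your overall strategy—define $F\colon\sCM(R)\to\Dsg(R)$ by sending a Cohen--Macaulay module to its stalk complex, check it is triangulated, then verify essential surjectivity via high syzygies and full faithfulness via calculus of fractions—is the standard direct route and is essentially correct in outline. The essential surjectivity argument is sound: every object of $\Dsg(R)$ reduces to a shifted module, and the dimension shift $\Ext^i_R(\Omega^n M,R)\cong\Ext^{i+n}_R(M,R)$ together with $\mathrm{id}_R R<\infty$ shows high syzygies are CM. The well-definedness observation is also fine.

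There is, however, a concrete error in the step showing $F$ is triangulated. You define $\Sigma M$ as $\mathrm{coker}(M\hookrightarrow I_M)$ for ``an injective envelope $I_M$,'' and then argue that ``since $R$ is Gorenstein, $I_M$ has finite projective dimension and is therefore perfect.'' If $I_M$ is an injective envelope in $\Mod R$, it is in general not finitely generated (e.g.\ $E(k)$ over $k[[x]]$), so it is not a perfect complex, and $I_M/M$ is not even a finitely generated module, let alone a CM one. The correct statement is Frobenius-categorical: since CM modules over a Gorenstein ring are totally reflexive, every $M\in\mathsf{CM}(R)$ admits an exact sequence $0\to M\to P\to M'\to 0$ with $P$ a finitely generated projective $R$-module and $M'\in\mathsf{CM}(R)$ (dualize a projective presentation of $M^*=\Hom_R(M,R)$ and use $M\cong M^{**}$). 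One sets $\Sigma M=M'$; then $P$ is trivially perfect and the argument proceeds. Your ``Gorenstein implies finite projective dimension of $I_M$'' remark is a red herring that signals you are thinking of honest injective modules, which is the wrong object. You should also say why such a conflation exists, since that is exactly where the Gorenstein hypothesis is used for the Frobenius structure on $\mathsf{CM}(R)$.

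The full faithfulness paragraph is a plan rather than a proof, which you acknowledge. The sketched approach—straighten roofs using CM approximations and the vanishing $\Ext^i_R(M,P)=0$ for $M$ CM, $P$ projective, $i>0$—is in the right spirit, but the actual work is nontrivial: you need to show both that a morphism of CM modules becoming zero in $\Dsg(R)$ factors through a projective, and that an arbitrary roof can be replaced by one with CM apex, and the relevant Ext-vanishing must be formulated for perfect complexes (not just projective modules) in the appropriate degree range. This is where Buchweitz's original argument does real work, typically via complete resolutions and the identification $\Dsg(R)\simeq\K_{\mathrm{tac}}(\proj R)\simeq\sCM(R)$, which sidesteps a direct roof computation. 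As it stands, this step is a genuine gap.
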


Our theorems imply that singularity categories also have finite dimension:

\begin{corollary}\label{53}
Let $R$ be either a complete local ring containing a field with perfect residue field or a ring essentially of finite type over a perfect field.
Then the dimension of $\Dsg(R)$ is finite.
If $R$ is Gorenstein, then $\sCM(R)$ is of finite dimension. 
\end{corollary}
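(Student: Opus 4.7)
The plan is to reduce the statement to finiteness of $\dim\Db(\mod R)$, which has already been established in the Main Theorem (Theorem \ref{main result}) and in Theorem \ref{rqrr}(2), and then transport this finiteness through two standard triangulated functors.

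First I would handle $\Dsg(R)$. By definition, the singularity category is the Verdier quotient of $\Db(\mod R)$ by the thick subcategory of perfect complexes, so the canonical quotient functor
\[
Q:\Db(\mod R)\longrightarrow \Dsg(R)
\]
is exact and essentially surjective; in particular it is essentially dense in the sense used before Lemma \ref{essentially dense}. Under either hypothesis on $R$, Theorem \ref{main result} or Theorem \ref{rqrr}(2) supplies an object $G=R/\p_1\oplus\cdots\oplus R/\p_n$ and an integer $m\ge 1$ such that $\Db(\mod R)=\gen{G}{m}{}$. Applying Lemma \ref{essentially dense} to $Q$ yields $\Dsg(R)=\gen{QG}{m}{}$, so $\dim\Dsg(R)\le m-1<\infty$.

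Second, I would deal with the Gorenstein case. Buchweitz's theorem (Theorem \ref{mikan}) supplies a triangle equivalence $\sCM(R)\cong\Dsg(R)$, and an equivalence of triangulated categories preserves dimension (it is trivially essentially dense in both directions, so Lemma \ref{essentially dense} gives equality of dimensions). Consequently $\dim\sCM(R)=\dim\Dsg(R)<\infty$.

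There is essentially no obstacle here: the work has all been done in Section \ref{finiteness}, and the corollary is a direct consequence of the behavior of dimension under essentially dense exact functors together with Buchweitz's equivalence. The only mildly delicate point worth noting is simply to remark that Lemma \ref{essentially dense}, although stated for one object, applies with $\XX=\{G\}$ so that the generator of $\Db(\mod R)$ is carried to a generator of $\Dsg(R)$ within the same number of cones and summands, giving the quantitative bound $\dim\Dsg(R)\le\dim\Db(\mod R)$ as a by-product.
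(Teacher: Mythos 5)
Your proposal follows exactly the paper's route: invoke Theorem \ref{main result} or Theorem \ref{rqrr}(2) to obtain a strong generator of $\Db(\mod R)$, push it through the Verdier quotient functor to $\Dsg(R)$ using Lemma \ref{essentially dense}, and transfer to $\sCM(R)$ via Buchweitz's equivalence (Theorem \ref{mikan}). This is correct and matches the paper's (terse) proof, which cites precisely these four ingredients.
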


\begin{proof}
The assertion follows from Theorems \ref{main result}, \ref{rqrr}(2), \ref{mikan} and Lemma \ref{essentially dense}.
\end{proof}

Next, we want to consider finiteness of the dimension of the stable category of Cohen-Macaulay modules over a Gorenstein local ring which is not necessarily complete.
For this purpose, we start by making a general lemma on generation of a triangulated category.

\begin{lemma}\label{dimension and thick}
Let $\TT$ be a triangulated category.
Let $G$ be an object of $\TT$ with $\gen{G}{\infty}{}=\TT$.
If the dimension of $\TT$ is finite, then $\TT=\gen{G}{n}{}$ for some integer $n\ge0$.
\end{lemma}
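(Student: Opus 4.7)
The plan is to use the fact that finite dimension gives a classical generator $H$ with $\gen{H}{d+1}{} = \TT$, then show that $H$ must already lie at some finite level of the filtration by powers of $\gen{G}{}{}$, and finally propagate this bound through the definition of $\gen{H}{d+1}{}$.

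First I would use the hypothesis $\dim \TT < \infty$ to pick an object $H \in \TT$ and an integer $d \ge 0$ with $\TT = \gen{H}{d+1}{}$. Since $H \in \TT = \gen{G}{\infty}{} = \bigcup_{n \ge 0} \gen{G}{n}{}$ by Remark \ref{hee}(1)(a), there exists an integer $m \ge 1$ with $H \in \gen{G}{m}{}$.

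The key technical step is to establish the inclusion
\[
\gen{G}{a}{} * \gen{G}{b}{} \subseteq \gen{G}{a+b}{}
\]
for all $a, b \ge 1$. This follows by induction on $a$: the base case $a = 1$ is immediate from the recursive definition $\gen{G}{b+1}{} = \gen{\gen{G}{}{} * \gen{G}{b}{}}{}{}$, and the inductive step uses the associativity of $*$ recorded in Remark \ref{hee}(1)(b) together with the fact that each $\gen{G}{n}{}$ is closed under finite direct sums, direct summands, and shifts. Iterating this inclusion and using $\gen{H}{}{} \subseteq \gen{G}{m}{}$, one obtains by induction on $k$ that $\gen{H}{k}{} \subseteq \gen{G}{km}{}$ for every $k \ge 1$.

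Applying this with $k = d + 1$ yields
\[
\TT = \gen{H}{d+1}{} \subseteq \gen{G}{(d+1)m}{} \subseteq \TT,
\]
so $\TT = \gen{G}{n}{}$ with $n = (d+1)m$, which is the desired conclusion.

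The main obstacle is the bookkeeping in the inclusion $\gen{G}{a}{} * \gen{G}{b}{} \subseteq \gen{G}{a+b}{}$: one has to be careful that the closure operations (direct sums, summands, shifts) implicit in the bracket notation interact correctly with $*$, and this is where the octahedral axiom (via associativity of $*$) does the real work. Everything else is a routine induction.
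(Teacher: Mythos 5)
Your proposal is correct and follows essentially the same approach as the paper: pick a generator $H$ with $\TT=\gen{H}{d+1}{}$, locate $H$ in $\gen{G}{m}{}$, and propagate by induction to get $\gen{H}{k}{}\subseteq\gen{G}{km}{}$. The paper's proof is terser (it just invokes "induction on $m$" without writing out the inclusion $\gen{G}{a}{}*\gen{G}{b}{}\subseteq\gen{G}{a+b}{}$ that drives the induction), so you've usefully made that key intermediate step explicit.
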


\begin{proof}
The finiteness of the dimension of $\TT$ implies that there exist an object $T\in\TT$ and an integer $m\ge0$ 
such that $\TT=\gen{T}{m}{}$.
Since $\TT=\gen{G}{\infty}{}$, the object $T$ belongs to $\gen{G}{h}{}$ for some $h\ge 0$.
It is shown by induction on $m$ that $\TT=\gen{G}{hm}{}$ holds.
\end{proof}

We will need to use right approximations in the proof of our next result.
Let us recall the definition.

\begin{definition}
Let $\XX$ be a subcategory of $\mod R$.
\begin{enumerate}[\rm (1)]
\item
Let $\phi:X\to M$ be a homomorphism of finitely generated $R$-modules with $X\in\XX$.
We say that $\phi$ is a {\em right $\XX$-approximation} (of $M$) if the induced homomorphism $\Hom_R(X',\phi):\Hom_R(X',X)\to\Hom_R(X',M)$ is surjective for any $X'\in\XX$.
\item
We say that $\XX$ is {\em contravariantly finite} if every finitely generated $R$-module has a right $\XX$-approximation.
\end{enumerate}
\end{definition}

We give a remark on existence of right approximations.

\begin{remark}\label{ks}
Let $X$ be a finitely generated $R$-module.
Then the subcategory $\add_RX$ of $\mod R$ is contravariantly finite.
Indeed, let $M$ be a finitely generated $R$-module.
Since $\Hom_R(X,M)$ is a finitely generated $R$-module, we can take a finite number of elements $f_1,\dots,f_n$ which generate the $R$-module $\Hom_R(X,M)$.
Then it is easily checked that the map
$$
(f_1,\dots,f_n):X^{\oplus n}\to M
$$
is a right $\add_RX$-approximation of $M$.
We should remark here that this does not require uniqueness of indecomposable direct sum decompositions.
\end{remark}

The following lemma will be used in the proof of our next result to descend from the completion of the base local ring.

\begin{lemma}\label{claim2}
Let $R\to S$ be a faithfully flat homomorphism of commutative Noetherian rings.
Let $X,M$ be finitely generated $R$-modules. 
If $M\tensor_RS$ belongs to $\add_S(X\tensor_RS)$, then $M$ belongs to $\add_RX$.
\end{lemma}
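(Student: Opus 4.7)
The plan is to use a right $\add_R X$-approximation and descend from $S$ back to $R$.

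First, by Remark \ref{ks}, there is an integer $n\ge 1$ and a right $\add_R X$-approximation $\phi\colon X^{\oplus n}\to M$. Let $K$ denote its kernel; a priori $\phi$ need not be surjective, but we will see this along the way.

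The key intermediate step is to show that $\phi\otimes_R S\colon X^{\oplus n}\tensor_R S\to M\tensor_R S$ is itself a right $\add_S(X\tensor_R S)$-approximation. To see this, pick any morphism $g\colon X\tensor_R S\to M\tensor_R S$ of $S$-modules. Since $R$ is Noetherian and $X$ is finitely generated (hence finitely presented) and $S$ is flat over $R$, there is a natural isomorphism
$$
\Hom_S(X\tensor_R S,\,M\tensor_R S)\ \cong\ \Hom_R(X,M)\tensor_R S,
$$
so we may write $g=\sum_j f_j\tensor s_j$ with $f_j\colon X\to M$ in $\mod R$ and $s_j\in S$. The right-approximation property of $\phi$ factors each $f_j$ as $f_j=\phi\, g_j$ for some $g_j\colon X\to X^{\oplus n}$, and therefore $g=(\phi\tensor_R S)\circ\sum_j(g_j\tensor s_j)$ factors through $\phi\tensor_R S$. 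Since every object of $\add_S(X\tensor_R S)$ is a summand of $(X\tensor_R S)^{\oplus\ell}$, and any map out of such a summand extends to the ambient direct sum, the same factorization property holds for all objects of $\add_S(X\tensor_R S)$.

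Now apply this to the hypothesis: $M\tensor_R S\in\add_S(X\tensor_R S)$, so the identity of $M\tensor_R S$ itself factors through $\phi\tensor_R S$. Hence $\phi\tensor_R S$ is a split epimorphism in $\mod S$. In particular it is surjective, and by faithful flatness of $R\to S$ so is $\phi$. We therefore have a short exact sequence
$$
0\to K\to X^{\oplus n}\xrightarrow{\phi} M\to 0
$$
in $\mod R$ whose base-change to $S$ is split.

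The final step is descent. The extension class $\xi\in\ext{R}{1}{M}{K}$ of the above sequence maps to the class of the base-changed sequence in $\ext{S}{1}{M\tensor_R S}{K\tensor_R S}$; since $S$ is $R$-flat and $M$ is finitely presented, the latter is naturally isomorphic to $\ext{R}{1}{M}{K}\tensor_R S$, and the map is the canonical one $\xi\mapsto \xi\tensor 1$. Because the base-changed sequence splits, $\xi\tensor 1=0$, and faithful flatness of $R\to S$ guarantees injectivity of $\ext{R}{1}{M}{K}\to \ext{R}{1}{M}{K}\tensor_R S$. Hence $\xi=0$, so the original sequence splits over $R$, exhibiting $M$ as a direct summand of $X^{\oplus n}$, i.e.\ $M\in\add_R X$.

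The one step that requires a little care is the verification that $\phi\tensor_R S$ remains a right $\add_S(X\tensor_R S)$-approximation, since naively one only controls factorizations of $R$-linear maps; the finite presentation of $X$ together with flatness of $S$ is exactly what makes the $\Hom$-compatibility work. Everything afterwards is standard faithfully flat descent of splittings of short exact sequences.
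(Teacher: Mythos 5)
Your proof is correct and follows essentially the same route as the paper: take a right $\add_R X$-approximation, show its base change to $S$ remains a right $\add_S(X\tensor_R S)$-approximation via the $\Hom$-compatibility isomorphism, deduce that $\phi\tensor_R S$ is split surjective, and then descend the splitting of the resulting short exact sequence using the injectivity of $\Ext^1_R(M,K)\to\Ext^1_R(M,K)\tensor_R S$ together with the base-change isomorphism for $\Ext^1$. The only cosmetic difference is that you check the approximation property for maps out of $X\tensor_R S$ and then extend to summands of finite direct sums, whereas the paper treats a general object of $\add_S(X\tensor_R S)$ directly; the content is identical.
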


\begin{proof}
By Remark \ref{ks} one can take a right $\add_RX$-approximation $\phi: X'\to M$ of $M$.
Let $f:Y\to M\tensor_RS$ be a homomorphism of $S$-modules with $Y\in\add_S(X\tensor_RS)$.
Then there is a pair of $S$-homomorphisms $\theta:Y\to (X\tensor_RS)^{\oplus n}$ and $\pi:(X\tensor_RS)^{\oplus n}\to Y$ 
satisfying $\pi\theta=1$.
The composite map $f\pi$ belongs to $\Hom_S((X\tensor_RS)^{\oplus n},M\tensor_RS)$, which is isomorphic to $\Hom_R(X^{\oplus n},M)\tensor_RS$ since $S$ is faithfully flat over $R$.
Hence we have $f\pi=\sum_{i=1}^mg_i\tensor s_i$ for some $g_i\in\Hom_R(X^{\oplus n},M)$ and $s_i\in S$.
As $\phi$ is a right $\add_RX$-approximation, each $g_i$ factors through $\phi$, i.e., there is a homomorphism $h_i:X^{\oplus n}\to X'$ of $R$-modules such that $g_i=\phi h_i$.
Therefore we have $f=(\phi\tensor_RS)\cdot(\sum_{i=1}^mh_i\tensor s_i)\theta$, namely, $f$ factors through $\phi\tensor_RS$.
Thus, the homomorphism $\phi\tensor_RS: X'\tensor_RS\to M\tensor_RS$ is a right $\add_S(X\tensor_RS)$-approximation of $M\tensor_RS$.

Since $M\tensor_RS$ belongs to $\add_S(X\tensor_RS)$, the identity map of $M\tensor_RS$ factors through $\phi\tensor_RS$.
This means that $\phi\tensor_RS$ is a split epimorphism.
In particular, $\phi\tensor_RS$ is surjective, and hence so is $\phi$ by the faithful flatness of $R\to S$.
The map $\phi$ fits into a short exact sequence
$$
\sigma:0\to K\to X'\xrightarrow{\phi} M\to 0
$$
of $R$-modules, which can be regarded as an element of $\Ext_R^1(M,K)$.
As $\phi\tensor_RS$ is a split epimorphism, the exact sequence
$$
\sigma\tensor_RS:0\to K\tensor_RS\to X'\tensor_RS\xrightarrow{\phi\tensor_RS} M\tensor_RS\to0
$$
splits.
(Exactness follows from the assumption that $S$ is faithfully flat over $R$.)
Hence $\sigma\tensor_RS=0$ in $\Ext_S^1(M\tensor_RS,K\tensor_RS)$.
There are natural maps
$$
\Ext_R^1(M,K)\xrightarrow{\alpha}\Ext_R^1(M,K)\tensor_RS\xrightarrow{\beta}\Ext_S^1(M\tensor_RS,K\tensor_RS)
$$
where $\alpha$ is a monomorphism and $\beta$ is an isomorphism, by the faithful flatness of $S$ over $R$.
The composition $\beta\alpha$ sends $\sigma$ to $\sigma\tensor_RS=0$, hence $\sigma=0$.
This means that the exact sequence $\sigma$ splits, and $\phi$ is a split epimorphism.
Therefore $M$ is isomorphic to a direct summand of $X'\in\add_RX$, and we conclude that $M$ is in $\add_RX$.
\end{proof}

Now we can prove the following result, which extends the second assertion of Corollary \ref{53} to the non-complete case.
For a finitely generated $R$-module and an integer $n\ge0$, the $n$-th syzygy of $M$ is denoted by $\mathrm{\Omega}^nM$.

\begin{theorem}
Let $R$ be a $d$-dimensional Gorenstein local ring containing a field with perfect residue field $k$. 
Assume that $R$ is excellent and has an isolated singularity. 
Then one has $\sCM(R)=\gen{\mathrm{\Omega}^dk}{n}{}$ for some positive integer $n$. 
In particular, the category $\sCM(R)$ has finite dimension.
\end{theorem}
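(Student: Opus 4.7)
The plan is to prove the conclusion first for the $\m$-adic completion $\widehat R$, and then to descend back to $R$ using the faithfully flat extension $R\to\widehat R$. The hypotheses transfer to $\widehat R$: completion preserves the Gorenstein property, Cohen's structure theorem supplies a coefficient field in $\widehat R$ with the same perfect residue field $k$, and the excellence of $R$ makes $R\to\widehat R$ a regular homomorphism, so the isolated singularity of $R$ ascends to $\widehat R$. Corollary \ref{53} applied to $\widehat R$ then gives $\dim\sCM(\widehat R)<\infty$.

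Next, I would verify that $\Omega^d k$ thick-generates $\sCM(\widehat R)$. For any Cohen-Macaulay $\widehat R$-module $M$ and any system of parameters $x_1,\dots,x_d$, the quotient $M/(\underline x)M$ has finite length and is built from $k$ by extensions in $\mod\widehat R$; its $d$-th syzygy is Cohen-Macaulay and lies in $\thick_{\sCM(\widehat R)}(\Omega^d k)$, while a Koszul-type argument links $M$ stably to this syzygy. Combining thick generation with finite dimension, Lemma \ref{dimension and thick} yields an integer $N\ge 1$ with
\[
\sCM(\widehat R)=\gen{\Omega^d k}{N}{}.
\]

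It remains to descend this bound to $R$. For a Cohen-Macaulay $R$-module $M$, the completion $\widehat M$ is Cohen-Macaulay over $\widehat R$, and since completion commutes with syzygies of finitely generated modules, $\widehat M\in\gen{\widehat{\Omega^d k}}{N}{\sCM(\widehat R)}$. I would establish by induction on $N$ the following claim: for every $X\in\gen{\Omega^d k}{N}{\sCM(\widehat R)}$ there exists $Y\in\gen{\Omega^d k}{N}{\sCM(R)}$ such that $X$ is a direct summand of $\widehat Y$ in $\sCM(\widehat R)$. The base case reduces to the fact that shifts of $\Omega^d k$ in $\sCM(\widehat R)$ are completions of shifts over $R$, together with Lemma \ref{claim2}. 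Applied to $X=\widehat M$ and combined with Lemma \ref{claim2} to descend the direct summand relation along $R\to\widehat R$, the claim produces $M\in\gen{\Omega^d k}{N}{\sCM(R)}$, giving the theorem with $n=N$.

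The principal obstacle lies in the inductive step of this claim: starting from a triangle $A\to Z\to B\to\sus A$ in $\sCM(\widehat R)$ with $A\in\gen{\Omega^d k}{1}{}$ and $B\in\gen{\Omega^d k}{N-1}{}$, the inductive hypothesis provides $A_0,B_0$ over $R$ with $A,B$ direct summands of $\widehat{A_0},\widehat{B_0}$, but the triangle itself need not descend a priori. The resolution rests on the hom-finiteness afforded by isolated singularity: for Cohen-Macaulay $R$-modules $U,V$, the stable Hom $\underline{\Hom}_R(U,V)$ is supported at $\m$ and hence has finite length, so the canonical map $\underline{\Hom}_R(U,V)\to\underline{\Hom}_{\widehat R}(\widehat U,\widehat V)$ is an isomorphism. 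Via this isomorphism, the boundary morphism $\widehat{B_0}\to\sus\widehat{A_0}$ extending the original $B\to\sus A$ lifts to a morphism $B_0\to\sus A_0$ in $\sCM(R)$, whose cone $Y$ lies in $\gen{\Omega^d k}{N}{\sCM(R)}$ and whose completion $\widehat Y$ contains $Z$, hence $X$, as a direct summand, closing the induction.
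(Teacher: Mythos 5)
Your proposal follows the same strategy as the paper's proof: pass to the completion $\widehat R$, establish finiteness of dimension there (via Corollary \ref{53}) together with thick generation by $\widehat{\Omega^d k}$, combine them through Lemma \ref{dimension and thick} to get a uniform bound $N$, and then descend to $R$ by an induction on the number of cones, using full faithfulness of the completion functor $\sCM(R)\to\sCM(\widehat R)$ and Lemma \ref{claim2}. Your rederivation of full faithfulness via hom-finiteness at the isolated singularity (stable Hom supported at $\m$, hence of finite length, hence unchanged by completion) is a correct in-line proof of what the paper cites as \cite[Proposition 1.5]{KMV}. The descent-of-triangles step is also essentially what the paper does, only with the boundary map $B\to\sus A$ lifted instead of the first map $X\to Y$ of the triangle; the two are equivalent up to rotation.

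There is, however, a genuine gap in your second paragraph. You need $\thick_{\sCM(\widehat R)}(\Omega^d k)=\sCM(\widehat R)$, and your outline establishes only one direction of the relationship. A system of parameters $\underline x$ is $M$-regular, so $M\otimes K(\underline x)\cong M/\underline x M$ in the singularity category, and iterated horseshoe-lemma filtrations give $\Omega^d(M/\underline x M)\in\thick(\Omega^d k)$. But the triangle $M\xrightarrow{x_i} M\to M/x_iM\to\sus M$ only shows $M/\underline x M\in\thick(M)$; a ``Koszul-type argument'' by itself does not deliver the converse $M\in\thick(M/\underline x M)$ (equivalently $M\in\thick(\Omega^d(M/\underline x M))$), since taking a cone is not invertible. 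Indeed, for $d=1$ the syzygy sequence gives $0\to\Omega M\to\Omega(M/xM)\to M\to 0$, which presents $M$ as a cone of $\Omega M\to\Omega(M/xM)$; this does not resolve the circularity because $\Omega M\cong\sus^{-1}M$. What is actually needed is the support-theoretic classification of thick subcategories of $\sCM(\widehat R)$ under the isolated-singularity hypothesis, which is precisely the content of the reference \cite[Corollary 2.9]{stcm} that the paper invokes at this point. You should replace the Koszul sketch with that citation (or reproduce the Hopkins--Neeman-type argument it rests on); the rest of your proof then goes through as in the paper.
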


\begin{proof}
Put $G=\mathrm{\Omega}^dk$.
As is well-known, for an excellent local ring the property of having an isolated singularity is stable under completion.
Hence the completion $\widehat{R}$ of $R$ also has an isolated singularity. 
Then it follows from \cite[Corollary 2.9]{stcm} that $\sCM(\widehat{R})=\gen{\widehat{G}}{\infty}{}$ holds. 
Corollary \ref{53} and Lemma \ref{dimension and thick} yield $\sCM(\widehat{R})=\gen{\widehat{G}}{n}{}$ for some $n\ge 1$. 

Here we establish a claim.

\begin{claim*}\label{claim1}
Let $m$ be a positive integer.
For any $N\in\gen{\widehat{G}}{m}{}$ there exists $M\in\gen{G}{m}{}$ such that $N$ is isomorphic to a direct summand of $\widehat{M}$ in $\sCM(\widehat{R})$. 
\end{claim*}

\begin{proof}[Proof of Claim]
We show this by induction on $m$.
If $m=1$, then in $\sCM(R)$ the module $N$ is isomorphic to a direct summand of a direct sum $\bigoplus_{i=1}^{h}\sus^{l_i}\widehat{G}$, and one can take $M=\bigoplus_{i=1}^{h}\sus^{l_i}G$. 
Assume $m\geq 2$.
There is an exact triangle
$$
X\xrightarrow{f}Y\to Z\to \sus X
$$
in $\sCM(\widehat{R})$ with $X\in\gen{\widehat{G}}{m-1}{}$ and $Y\in\gen{\widehat{G}}{}{}$ such that $N$ is a direct summand of $Z$ . 
The induction hypothesis implies that there exist objects $V\in\gen{G}{m-1}{}$ and $W\in\gen{G}{}{}$ such that $X$ and $Y$ are isomorphic to direct summands of $\widehat{V}$ and $\widehat{W}$, respectively.
Since the exact functor
$$
\widehat{(-)}=(-)\tensor_R\widehat R: \sCM(R)\to \sCM(\widehat{R})
$$
is fully faithful (cf. \cite[Proposition 1.5]{KMV}), we obtain a morphism $g:V\to W$ in $\sCM(R)$ and a commutative diagram
$$
\begin{CD}
X @>f>> Y \\
@V{\alpha}VV @V{\beta}VV \\
\widehat{V} @>{\widehat{g}}>> \widehat{W}
\end{CD}
$$
in $\sCM(\widehat{R})$ such that $\alpha,\beta$ are split monomorphisms.
Taking the mapping cone of $g$ gives an exact triangle
$$
V\xrightarrow{g}W\to M\to\sus V
$$
in $\sCM(R)$.
Since $V\in\gen{G}{m-1}{}$ and $W\in\gen{G}{}{}$, the module $M$ belongs to $\gen{G}{m}{}$.
We have a commutative diagram of exact triangles in $\sCM(\widehat{R})$
\[\begin{CD}
X @>f>> Y @>>> Z @>>> \sus X \\
@V{\alpha}VV @V{\beta}VV @V{\gamma}VV @VVV \\
\widehat{V} @>{\widehat{g}}>> \widehat{W} @>>> \widehat{M} @>>> \sus \widehat{V}
\end{CD}\]
and we can easily verify that $\gamma$ is a split monomorphism.
Thus in $\sCM(R)$ the object $N$ is isomorphic to a direct summand of $\widehat{M}$, which belongs to $\gen{\widehat{G}}{m}{}$.
\renewcommand{\qedsymbol}{$\square$}
\end{proof}

Let $X$ be an object of $\sCM(R)$.
Then the completion $\widehat X$ is in $\sCM(\widehat R)=\gen{\widehat{G}}{n}{}$.
The above claim implies that there exists an object $Y\in\gen{G}{n}{}$ such that $\widehat{X}$ is isomorphic to a direct summand of $\widehat{Y}$ in $\sCM(\widehat{R})$.
Hence the module $\widehat X$ belongs to $\add_{\widehat R}(\widehat Y\oplus\widehat R)=\add_{\widehat R}((Y\oplus R)\tensor_R\widehat R)$.
It is observed by Lemma \ref{claim2} that $X$ is in $\add_R(Y\oplus R)$, whence $X$ belongs to $\gen{G}{n}{}$ in $\sCM(R)$.
\end{proof}


\section{Lower bounds}\label{lower}

In this section, we will mainly study lower bounds for the dimension of the bounded derived category of finitely generated modules.
We shall refine a result of Rouquier over an affine algebra, and also give a similar lower bound over a general commutative Noetherian ring.
We will end this section by mentioning lower bounds for dimensions of singularity categories and stable categories of Cohen-Macaulay modules.

Throughout this section, let $R$ be a commutative Noetherian ring.
First, we consider refining a result of Rouquier.
We start by stating a lemma obtained from basic ideal theory.
We denote the set of maximal ideals of $R$ by $\Max R$.
When $R$ has finite Krull dimension, $\Assh R$ denotes the set of prime ideals $\p$ of $R$ with $\dim R/\p=\dim R$.

\begin{lemma}\label{bit}
Let $R$ be a finitely generated algebra over a field.
Then the equality
$$
\bigcap_{\p\in\Assh R}\p=\bigcap_{
\begin{smallmatrix}
\m\in\Max R\\
\height\m=\dim R
\end{smallmatrix}
}\m
$$
holds.
\end{lemma}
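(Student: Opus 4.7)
The plan is to establish the two inclusions separately, exploiting two classical facts about a finitely generated algebra $R$ over a field $k$: (i) $R$ is a Jacobson ring, so every prime ideal is the intersection of the maximal ideals containing it; and (ii) the dimension formula: for any prime $\p$ of $R$, the quotient $R/\p$ is a finitely generated $k$-algebra domain, and every maximal ideal of $R/\p$ has height equal to $\dim R/\p$. These can be cited from \cite{E} (essentially Corollary 13.11 and the Nullstellensatz).

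For the inclusion $\subseteq$, I would take a maximal ideal $\m$ of $R$ with $\height\m=\dim R=:d$ and pick a saturated chain of primes $\p_0\subsetneq\p_1\subsetneq\cdots\subsetneq\p_d=\m$. Then $\p_0$ is a minimal prime with $\dim R/\p_0\ge d$, forcing $\dim R/\p_0=d$, so $\p_0\in\Assh R$. Hence $\bigcap_{\p\in\Assh R}\p\subseteq\p_0\subseteq\m$, which gives the inclusion upon intersecting over all such $\m$.

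For the reverse inclusion $\supseteq$, I would fix $\p\in\Assh R$ and use fact (ii) applied to $R/\p$: every maximal ideal of $R/\p$ has height $\dim R/\p=\dim R$. Translating back to $R$, every maximal ideal $\m$ containing $\p$ satisfies $\height\m\ge\height_{R/\p}(\m/\p)=\dim R$, and since always $\height\m\le\dim R$, equality holds. Thus every maximal ideal above $\p$ has height $\dim R$. Using fact (i), $\p=\bigcap_{\m\in\Max R,\ \m\supseteq\p}\m$, and since each such $\m$ appears in the right-hand intersection, we conclude $\bigcap_{\m\in\Max R,\ \height\m=\dim R}\m\subseteq\p$, which gives the inclusion on intersecting over all $\p\in\Assh R$.

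The only subtle point is making sure the dimension formula is applied correctly: it is \emph{not} true for general Noetherian rings that $\height\m=\dim R$ for all maximal $\m$ containing a prime in $\Assh R$, so the argument really uses the finite-type hypothesis over a field. Apart from this, the proof is a direct application of the two standard facts, so no genuine obstacle is expected.
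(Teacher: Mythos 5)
Your proposal is correct and takes essentially the same approach as the paper: both inclusions rest on the two classical facts that a finitely generated algebra over a field is Jacobson and that in a finitely generated domain over a field all maximal ideals have height equal to the Krull dimension (the paper cites Matsumura Theorem~5.5 and Eisenbud Corollary~13.4 respectively). The paper packages the argument slightly differently by first reducing, via the Jacobson property applied to every $\p\in\Assh R$, to the single equivalence that a maximal ideal has height $\dim R$ if and only if it contains some prime in $\Assh R$, and then proves the two directions by exactly your chain argument and your dimension-formula argument; but the underlying content is the same.
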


\begin{proof}
Let $d=\dim R$.
Take $\p\in\Assh R$.
Then $\p=\sqrt\p$ coincides with the intersection of the maximal ideals of $R$ containing $\p$; see \cite[Theorem 5.5]{Ma}.
Hence $\bigcap_{\p\in\Assh R}\p$ is equal to the intersection of the maximal ideals containing some prime ideals in $\Assh R$.
Thus it is enough to show that a maximal ideal of $R$ has height $d$ if and only if it contains some prime ideal in $\Assh R$.
Let $\m$ be a maximal ideal of $R$.
If $\m$ is of height $d$, then there is a chain $\p_0\subsetneq\p_1\subsetneq\cdots\subsetneq\p_d=\m$ in $\Spec R$, and $\p_0$ belongs to $\Assh R$.
Conversely, suppose that $\m$ contains $\p\in\Assh R$.
Then $R/\p$ is an integral domain that is finitely generated over a field.
By \cite[Corollary 13.4]{E} we have $\dim R/\p=\height\m/\p$.
Therefore $\height\m=d$ holds.
\end{proof}

For an $R$-module $M$, its {\em non-free locus} $\operatorname{NF}(M)$ is defined to be the set of prime ideals $\p$ of $R$ such that the $R_\p$-module $M_\p$ is non-free.
It is a well-known fact that if $M$ is finitely generated, then $\operatorname{NF}(M)$ is a closed subset of $\Spec R$ in the Zariski topology; see \cite[Corollary 2.11]{res} for instance.

Now, under a mild assumption, we can prove that for an affine algebra $R$, the Krull dimension of $R$ is a lower bound for the dimension of the derived category $\Db(\mod R)$.

\begin{theorem}\label{dimdim}
Let $R$ be a finitely generated algebra over a field.
Suppose that there exists $\p\in\Assh R$ such that $R_\p$ is a field.
Then one has the inequality $\dim\Db(\mod R)\ge\dim R$.
\end{theorem}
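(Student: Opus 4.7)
The plan is to reduce to a regular local ring of dimension $d:=\dim R$, where Rouquier's lower bound is already known, and then descend via the localization inequality of Lemma~\ref{ess dense of localization}. Concretely, I aim to produce a maximal ideal $\m\supseteq\p$ with $\height\m=d$ such that $R_\m$ is a regular local ring of dimension $d$.

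To find such an $\m$, observe that $R_\p$ being a field forces $\p$ to be a minimal prime of $R$ and makes $R$ trivially regular at $\p$. Hence the regular locus $U:=\{\q\in\Spec R:R_\q\text{ is regular}\}$ contains $\p$. Since $R$ is a finitely generated algebra over a field it is excellent, so $U$ is open in $\Spec R$. Thus $U\cap V(\p)$ is a non-empty open subset of the irreducible closed set $V(\p)$; since $R$ is a Jacobson ring, this open set contains a closed point $\m$. Now $R/\p$ is a finitely generated integral $k$-algebra of Krull dimension $d$, so \cite[Corollary 13.4]{E} yields $\height(\m/\p)=d$ in $R/\p$, and because $\p$ is minimal we conclude $\height\m=d$. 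Therefore $R_\m$ is a regular local ring of dimension exactly $d$.

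With such an $\m$ in hand, I invoke Rouquier's local lower bound: for a regular local ring $(S,\m_S,k_S)$ of dimension $d$, one has $\dim\Db(\mod S)\ge d$. This is proved by the ghost lemma applied to the $d$ morphisms coming from a regular system of parameters, whose $d$-fold Yoneda product in $\Ext_S^d(k_S,k_S)$ is non-zero and so forces the $\langle G\rangle$-level of $k_S$ to be at least $d+1$ for every single generator $G$. Combining with Lemma~\ref{ess dense of localization}(3) then gives $\dim\Db(\mod R)\ge\dim\Db(\mod R_\m)\ge d$. The main obstacle is the first step: securing a closed point in the regular locus whose local ring has the full ambient dimension. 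The hypothesis on $\p$ is exactly what makes this possible, because it simultaneously ensures that the regular locus is non-empty and that it meets the maximal-dimensional irreducible component $V(\p)$, so that Jacobsonness can furnish the required maximal ideal; without such a hypothesis every closed point of height $d$ could be a singular point, and the strategy would fail.
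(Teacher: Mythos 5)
There is a genuine gap. Your reduction to the regular local ring $R_\m$ is correctly set up, and the construction of a height-$d$ maximal ideal $\m$ inside the regular locus is fine, but the inequality you then invoke --- that $\dim\Db(\mod S)\ge d$ for a regular local ring $(S,\m_S,k_S)$ of dimension $d$ --- is not established by the argument you sketch, and is not a known elementary fact. The non-vanishing $d$-fold Yoneda product in $\Ext^d_S(k_S,k_S)$ does show $k_S\notin\gen{S}{d}{}$, because the extensions used are ghosts with respect to $G=S$ (one has $\Ext^i_S(S,-)=0$ for $i>0$). For an arbitrary generator $G$ they need not be $G$-ghosts, and the ghost lemma does not apply. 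Indeed, take $G=k_S\oplus S$: the $\gen{G}{}{}$-level of $k_S$ is $1$, not $d+1$, since $k_S$ is a summand of $G$. So the dimension of $\Db(\mod S)$, which is an infimum over all generators, is not bounded below this way. For a local ring the only prime of height $d$ is the maximal ideal, and one cannot choose it to suit a given generator; that is exactly what makes the local case hard, and I am not aware of a proof of the lower bound you assert for $d\ge 2$.

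The paper avoids this by choosing $\m$ as a function of a fixed generator $G$ of $\Db(\mod R)$ rather than of $R$ alone. Writing $\Db(\mod R)=\gen{G}{n+1}{}$, one sets $M=\bigoplus_i\mathrm{H}^iG$, notes $\p\notin\operatorname{NF}(M)$ because $R_\p$ is a field, and uses Lemma \ref{bit} to find a height-$d$ maximal ideal $\m$ outside the closed set $\operatorname{NF}(M)$. Then each $\mathrm{H}^i(G_\m)$ is $R_\m$-free, so $G_\m\in\gen{R_\m}{}{}$ by Lemma \ref{split out}, hence $\Db(\mod R_\m)=\gen{G_\m}{n+1}{}\subseteq\gen{R_\m}{n+1}{}$ by Lemma \ref{ess dense of localization}(2); now $\kappa(\m)\in\gen{R_\m}{n+1}{}$, and since the generator is $R_\m$ itself the bound from \cite[Lemma 4.3]{KK} applies to give $\height\m<n+1$. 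Your choice of $\m$ guarantees $R_\m$ is regular, but does not make $G_\m$ a direct sum of shifts of free modules; that control over the generator after localization is the step you cannot skip.
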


\begin{proof}
The theorem is obvious when the left hand side is infinite, so assume that $\dim\Db(\mod R)=:n$ is finite.
We can write $\Db(\mod R)=\gen{G}{n+1}{}$ for some object $G\in\Db(\mod R)$.
Set $M=\bigoplus_{i\in\Z}{\rm H}^iG$.
This is a finitely generated $R$-module, and one can write $\operatorname{NF}(M)=V(I)$ for some ideal $I$ of $R$.
Since $R_\p$ is a field, $\p$ does not belong to $\operatorname{NF}(M)$, equivalently, $\p$ does not contain $I$.
As $\p$ belongs to $\Assh R$, we have
$$
I\nsubseteq\bigcap_{\q\in\Assh R}\q=\bigcap_{
\begin{smallmatrix}
\m\in\Max R\\
\height\m=\dim R
\end{smallmatrix}
}\m
$$
by Lemma \ref{bit}.
Therefore we can choose a maximal ideal $\m$ with $\height\m=\dim R$ that does not contain the ideal $I$.
Then $\m$ is not in $V(I)=\operatorname{NF}(M)$, so that the $R_\m$-module $M_\m$ is free.
For each integer $i$, the module ${\rm H}^i(G_\m)$ is isomorphic to $({\rm H}^iG)_\m$, which is a direct summand of the free $R_\m$-module $M_\m$.
Hence ${\rm H}^i(G_\m)$ is also a free $R_\m$-module.
Lemma \ref{split out} implies that in $\Db(\mod R_\m)$ the localized complex $G_\m$ is isomorphic to 
$\bigoplus_{i\in\Z}\sus^{-i}{\rm H}^i(G_\m)$, which belongs to $\gen{R_\m}{}{}$.
By Lemma \ref{ess dense of localization}(2), we have $\Db(\mod R_\m)=\gen{G_\m}{n+1}{}=\gen{R_\m}{n+1}{}$.
In particular, the residue field $\kappa(\m)$ belongs to $\gen{R_\m}{n+1}{}$, and it follows from \cite[Lemma 4.3]{KK} that $\dim R=\height\m=\dim R_\m<n+1$.
\end{proof}

The following result of Rouquier \cite[Proposition 7.16]{R2} is a direct consequence of Theorem \ref{dimdim}.

\begin{corollary}[Rouquier]
Let $R$ be a reduced finitely generated algebra over a field.
Then $\dim\Db(\mod R)\ge\dim R$.
\end{corollary}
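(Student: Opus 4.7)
The strategy is to reduce the corollary immediately to Theorem \ref{dimdim}. Thus the only task is to exhibit some $\p \in \Assh R$ for which the localization $R_\p$ is a field, under the assumption that $R$ is reduced and finitely generated over a field.

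First I would identify a candidate $\p$. Since $R$ is a finitely generated algebra over a field, it is Noetherian and has only finitely many minimal primes, and $\dim R$ is realized as $\dim R/\p$ for some minimal prime $\p$ (because $\dim R = \sup_{\q \in \Min R} \dim R/\q$, with the supremum attained in the finite set $\Min R$). I would pick such a minimal prime $\p$; then by definition $\p \in \Assh R$.

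Next I would verify that $R_\p$ is a field. The key observation is that because $\p$ is a minimal prime of $R$, the only prime of $R_\p$ is $\p R_\p$, so $\p R_\p$ equals the nilradical of $R_\p$. On the other hand, localization preserves the property of being reduced, so the reducedness of $R$ gives that $R_\p$ is reduced and its nilradical vanishes. Hence $\p R_\p = 0$, so $R_\p$ is a field.

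With these two facts in hand, I would simply invoke Theorem \ref{dimdim} applied to this choice of $\p$ to conclude $\dim \Db(\mod R) \ge \dim R$. There is no genuine obstacle here; the only subtlety worth being explicit about is that a minimal prime of $R$ automatically lies in $\Assh R$ whenever it realizes the Krull dimension, which is how the hypothesis of Theorem \ref{dimdim} is met.
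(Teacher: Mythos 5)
Your proof is correct and is exactly the intended deduction: the paper states the corollary as a direct consequence of Theorem \ref{dimdim} without supplying details, and you have filled in precisely the two facts needed (a minimal prime $\p$ achieving $\dim R/\p = \dim R$ lies in $\Assh R$, and reducedness forces $R_\p$ to be a field since $\p R_\p$ is both the unique prime and the nilradical of $R_\p$).
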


Next, we try to extend Theorem \ref{dimdim} to non-affine algebras.
We do not know whether the inequality in Theorem \ref{dimdim} itself holds over non-affine algebras; we can prove that a similar but slightly weaker inequality holds over them.
We need the lemma below, which is also shown by basic ideal theory.
For an ideal $I$ of $R$, let $\Min_RR/I$ denote the set of minimal prime ideals of $I$.

\begin{lemma}\label{codi}
Let $R$ be a local ring of Krull dimension $d\ge 1$.
Let
$$
I=\bigcap_{
\begin{smallmatrix}
\p\in\Spec R\\
\height\p=d-1
\end{smallmatrix}
}\p
$$
be an ideal of $R$.
Then one has $\dim R/I=d$.
\end{lemma}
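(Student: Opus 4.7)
The inequality $\dim R/I \le d$ is automatic since $R/I$ is a quotient of $R$; the plan is to establish $\dim R/I \ge d$ by producing a prime $\q$ of $R$ with $I \subseteq \q$ and $\dim R/\q = d$, that is, some $\q \in \Assh R$ containing $I$. Such $\q$ exists because $\dim R = d$ forces $\Assh R \ne \varnothing$.

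First I would reduce to the case that $R$ is a domain. Fix $\q \in \Assh R$ and pass to $\bar R := R/\q$, a Noetherian local domain of dimension $d$. Any prime $\bar\p$ of $\bar R$ of height $d-1$ lifts to a prime $\p$ of $R$ with $\p \supseteq \q$ and $\height_R \p = d-1$: a saturated chain of length $d-1$ in $\bar R$ ending at $\bar\p$ lifts to a chain of the same length in $R$ starting at $\q$, while $\p \subsetneq \m$ caps $\height_R \p$ at $d-1$. Since $I$ is contained in every height-$(d-1)$ prime of $R$, its image in $\bar R$ lies in the intersection of all height-$(d-1)$ primes of $\bar R$; hence the problem reduces to showing that in any Noetherian local domain of dimension $d \ge 1$, the ideal $J := \bigcap_{\height \bar\p = d-1} \bar\p$ is zero.

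Next I would prove this domain case by induction on $d$. For $d = 1$ the only height-$0$ prime of a domain is $(0)$, so $J = 0$. For $d \ge 2$, suppose $0 \ne \bar x \in J$ with the aim of deriving a contradiction. Call a height-$1$ prime $\bar\p_1$ of $\bar R$ \emph{good} if $\dim \bar R/\bar\p_1 = d-1$; such a prime always exists as the first step of a saturated chain realizing $\dim \bar R = d$. Applying the induction hypothesis inside the Noetherian local domain $\bar R/\bar\p_1$ of dimension $d-1$, the intersection of its height-$(d-2)$ primes is zero; pulling this back shows that the intersection of height-$(d-1)$ primes of $\bar R$ containing $\bar\p_1$ equals $\bar\p_1$. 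Therefore $J \subseteq \bar\p_1$, and in particular $\bar x \in \bar\p_1$, for every good $\bar\p_1$.

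To close the argument I would construct infinitely many good height-$1$ primes. Since $\Assh \bar R = \{(0)\}$, every nonzero $\bar y \in \bar\m$ is a parameter, so $\dim \bar R/(\bar y) = d-1$ and at least one minimal prime of $(\bar y)$ is good. Starting from any good $\bar\p_1$ and iteratively choosing $\bar y_k \in \bar\m$ outside the previously constructed good primes $\bar\p_1, \dots, \bar\p_{k-1}$ (possible by prime avoidance, since $\bar\m$ has height $d \ge 2$ while each $\bar\p_i$ has height $1$), one obtains an infinite sequence of distinct good height-$1$ primes. Each of them contains $\bar x$, so each is a minimal prime of the principal ideal $(\bar x)$ in the domain $\bar R$; but Noetherianity allows only finitely many such, which is a contradiction, so $J = 0$. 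The delicate point is precisely this last step: without catenarity one cannot assume every height-$1$ prime is good, and it is the parameter characterization in a Noetherian local domain that keeps the induction alive.
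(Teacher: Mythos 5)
Your proof is correct, but it takes a genuinely different route from the paper's. Both arguments proceed by induction on $d$ and both ultimately reduce the dimension by one, but the reductions and the closing arguments differ substantially.

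The paper never reduces to the domain case. Instead, in the inductive step it chooses a single element $x\in\m$ that simultaneously avoids every prime in $\Min R$ and every prime in $\Min_RR/I$ (both finite sets, and $\m$ lies in neither since $d\ge1$ and $I\subseteq\p_{d-1}\subsetneq\m$). Passing to $R/(x)$, it then shows: (a) $\dim R/(x)=d-1$ and $\dim R/(I+(x))=\dim R/I-1$ because $x$ is a parameter on both $R$ and $R/I$; and (b) every height-$(d-2)$ prime of $R/(x)$ pulls back to a height-$(d-1)$ prime of $R$ (using that $x/1$ avoids minimal primes of each $R_\p$), so $I(R/(x))$ lands inside the analogous ideal $I'$ of $R/(x)$. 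The induction hypothesis then produces the clean arithmetic chain $\dim R/I-1=\dim(R/(x))/I(R/(x))=\dim R/(x)=d-1$. There is no proof by contradiction and no appeal to infinitely many primes.

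Your proof instead first passes to a domain $R/\q$ with $\q\in\Assh R$, restates the goal as $J=0$ there, and in the inductive step descends modulo a "good" height-one prime $\bar\p_1$. The induction hypothesis applied to $\bar R/\bar\p_1$ gives $J\subseteq\bar\p_1$ for each good $\bar\p_1$, and you then close with a counting argument: prime avoidance yields infinitely many distinct good height-one primes, each forced to be a minimal prime of the nonzero principal ideal $(\bar x)$, contradicting Noetherianity. This is a valid and rather pretty argument; what you give up relative to the paper is the clean dimension identity (you must run a contradiction), and what you gain is a more transparent reduction to the domain case together with the appealing observation that the "good" (parameter) primes, not arbitrary height-one primes, are what keep the induction alive in the absence of catenarity.
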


\begin{proof}
We use induction on $d$.
When $d=1$, the ideal $I$ coincides with the nilradical of $R$, and hence $\dim R/I=d$ holds.
Let $d\ge 2$, and denote by $\m$ the maximal ideal of $R$.
There exists a chain
$$
\p_0\subsetneq\p_1\subsetneq\cdots\subsetneq\p_{d-1}\subsetneq\p_d=\m
$$
of prime ideals of $R$.
Then since $\p_{d-1}$ has height $d-1$, we have inclusions $I\subseteq\p_{d-1}\subsetneq\m$.
This especially says that $\m$ is not in $\Min_RR/I$.
As $d>0$, we have that $\m$ does not belong to the set $\Min R\cup\Min_RR/I$.
By prime avoidance, one finds an element $x\in\m$ which is not in any prime ideal in $\Min R\cup\Min_RR/I$.
We have $\dim R/(x)=d-1$ and $\dim R/I+(x)=\dim R/I-1$.

Let $P$ be a prime ideal of the residue ring $R/(x)$ of height $d-2$.
Write $P=\p/(x)$ for some $\p\in V(x)$.
Note that no minimal prime ideal of $R_\p$ contains $\frac{x}{1}$.
We have equalities
$$
d-2=\height P=\height\p/(x)=\dim R_\p/(\textstyle\frac{x}{1})=\dim R_\p-1=\height\p-1.
$$
Hence $\p$ is of height $d-1$.
Therefore $I$ is contained in $\p$, and $I(R/(x))$ is contained in $P$.
Thus, the ideal $I(R/(x))$ of $R/(x)$ is contained in
$$
I':=\bigcap_{
\begin{smallmatrix}
P\in\Spec R/(x)\\
\height P=d-2
\end{smallmatrix}
}P.
$$
The induction hypothesis gives an equality $\dim (R/(x))/I'=\dim R/(x)$, which implies
$$
\dim R/I-1=\dim R/I+(x)=\dim (R/(x))/I(R/(x))=\dim R/(x)=d-1.
$$
The equality $\dim R/I=d$ follows.
\end{proof}

Now we can show the following result.

\begin{theorem}\label{dimdim-1}
Let $R$ be a ring of finite Krull dimension such that $R_\mathfrak{p}$ is a field for all $\p\in\Assh R$. 
Then we have $\dim \Db(\mod R)\geq \dim R-1$. 
\end{theorem}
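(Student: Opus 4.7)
The plan is to mirror the proof of Theorem~\ref{dimdim}, replacing the global, affine-algebra ingredient (Lemma~\ref{bit}) by the purely local ingredient (Lemma~\ref{codi}). Set $d:=\dim R$; we may assume $d\ge 1$ and that $n:=\dim\Db(\mod R)$ is finite, and write $\Db(\mod R)=\gen{G}{n+1}{}$ for some object $G$. Put $M:=\bigoplus_{i\in\Z}\mathrm{H}^iG\in\mod R$ and pick an ideal $I$ of $R$ with $\operatorname{NF}(M)=V(I)$. The hypothesis that $R_\p$ is a field for every $\p\in\Assh R$ makes $M_\p$ automatically free, so $I\not\subseteq\p$ whenever $\p\in\Assh R$.

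The technical core is to locate a prime $\p$ of $R$ with $\height\p\ge d-1$ and $I\not\subseteq\p$. Since $\dim R=d$, I would fix a maximal ideal $\m$ of $R$ with $\height\m=d$ and pass to $R_\m$. The primes of $R_\m$ of height $d-1$ correspond bijectively to the height-$(d-1)$ primes of $R$ contained in $\m$, so if some such prime avoids $I$, take it. Otherwise $IR_\m$ is contained in the intersection of all height-$(d-1)$ primes of $R_\m$, and Lemma~\ref{codi} gives $\dim R_\m/IR_\m=d$. Picking $\q\in\Spec R_\m$ with $\q\supseteq IR_\m$ and $\dim R_\m/\q=d$ forces $\q$ to be a minimal prime of $R_\m$, and its contraction $\p\subseteq\m$ to $R$ then satisfies $\dim R/\p\ge d$, hence $\p\in\Assh R$, while $\p\supseteq I$---contradicting the previous paragraph.

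With such a $\p$ in hand, $M_\p$ is free over $R_\p$, so each $\mathrm{H}^i(G_\p)\cong(\mathrm{H}^iG)_\p$ is a summand of the free module $M_\p$ and hence free. Lemma~\ref{split out} then yields
\[
G_\p\cong\bigoplus_{i\in\Z}\sus^{-i}\mathrm{H}^i(G_\p)\in\gen{R_\p}{}{}
\]
in $\Db(\mod R_\p)$, and Lemma~\ref{ess dense of localization}(2) upgrades $\Db(\mod R)=\gen{G}{n+1}{}$ to $\Db(\mod R_\p)=\gen{R_\p}{n+1}{}$. In particular $\kappa(\p)\in\gen{R_\p}{n+1}{}$, so \cite[Lemma~4.3]{KK} forces $\dim R_\p<n+1$; combined with $\height\p\ge d-1$, this gives the desired $d-1\le n$.

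The main obstacle is the extraction of the height-$(d-1)$ prime outside $V(I)$. In the affine proof of Theorem~\ref{dimdim}, Lemma~\ref{bit} conveniently produces a maximal ideal of top height avoiding $I$, but no such global mechanism exists for a general commutative Noetherian ring. The substitute is Lemma~\ref{codi}, which is intrinsically local and only guarantees a prime of height $d-1$ rather than $d$; this is precisely what forces the loss of one unit in the conclusion and explains the shape $\dim R-1$ of the bound.
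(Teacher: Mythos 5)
Your argument is correct and follows essentially the same route as the paper: the same reduction via $M=\bigoplus_i\mathrm{H}^iG$ and $\operatorname{NF}(M)=V(I)$, the same localization at a maximal ideal of height $d$, and the same use of Lemma~\ref{codi} to extract a height-$(d-1)$ prime outside $V(I)$. The only cosmetic difference is that you run the Lemma~\ref{codi} step by contradiction (assume $IR_\m$ lies in the intersection of height-$(d-1)$ primes and derive $\Assh R\cap V(I)\neq\varnothing$), whereas the paper first establishes directly that $\dim R_\m/IR_\m<\dim R_\m$ and then invokes the contrapositive of the lemma; these are logically equivalent, so this is the paper's proof in a slightly rearranged form.
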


\begin{proof}
The assertion trivially holds if either $\dim\Db(\mod R)=\infty$ or $\dim R=0$.
Let $n:=\dim\Db(\mod R)<\infty$ and $d:=\dim R\ge 1$.
Then we have $\Db(\mod R)=\gen{G}{n+1}{}$ for some object $G\in\Db(\mod R)$.
Put $M=\bigoplus_{i\in\Z}{\rm H}^iG$.
As $M$ is finitely generated, we have $\operatorname{NF}(M)=V(I)$ for some ideal $I$ of $R$.
The argument in the latter half of the proof of Theorem \ref{dimdim} (with $\m$ replaced by $\p$) shows that $\height\p<n+1$ for every $\p\in D(I)$.
Thus it suffices to find a prime ideal $\p\in D(I)$ with $\height\p\ge d-1$.
Take a maximal ideal $\m$ of $R$ of height $d$.
If $\m$ does not contain the ideal $I$, then we are done.
So we assume that $\m$ contains $I$.

We claim here that the inequality $\dim R_\m/IR_\m<\dim R_\m$ holds.
In fact, suppose that we have $\dim R_\m/IR_\m=\dim R_\m$.
Then $\height\m/I=\height\m=d$, which implies that there exist prime ideals $\p_0,\dots,\p_d$ of $R$ that satisfy
$$
I\subseteq\p_0\subsetneq\p_1\subsetneq\cdots\subsetneq\p_d=\m.
$$
It follows from this chain that $\p_0$ is a prime ideal in $\Assh R$ containing $I$.
This implies that $\Assh R\cap\operatorname{NF}(M)\ne\varnothing$, contrary to the assumption of the theorem.

This claim and Lemma \ref{codi} imply that $IR_\m$ is not contained in the intersection of the prime ideals of $R_\m$ of height $d-1$.
Hence we find a prime ideal $P\in\Spec R_\m$ with $\height P=d-1$ that does not contain $IR_\m$.
Write $P=\p R_\m$ for some prime ideal $\p$ of $R$ contained in $\m$.
Then $\p$ is in $D(I)$ and $\height\p=d-1$.
\end{proof}

Here is an obvious conclusion of the above theorem.

\begin{corollary}
Let $R$ be a reduced ring of finite Krull dimension.
Then $\dim\Db(\mod R)\ge\dim R-1$.
\end{corollary}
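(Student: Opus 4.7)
The plan is to deduce this corollary directly from Theorem \ref{dimdim-1}, which already gives the inequality $\dim\Db(\mod R)\ge\dim R-1$ under the hypothesis that $R_\p$ is a field for every $\p\in\Assh R$. Thus the only task is to verify this hypothesis whenever $R$ is reduced.

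First I would observe that $\Assh R\subseteq\Min R$: if $\p\in\Assh R$ were not minimal, then there would exist a prime $\q\subsetneq\p$, giving the strict inequality $\dim R/\q>\dim R/\p=\dim R$, which is absurd. Hence every $\p\in\Assh R$ is a minimal prime of $R$.

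Next I would use the hypothesis that $R$ is reduced. For a minimal prime $\p$, the local ring $R_\p$ has $\p R_\p$ as its unique prime ideal, so it is zero-dimensional; being a localization of a reduced ring, it is itself reduced, and hence a field. In particular, $R_\p$ is a field for every $\p\in\Assh R$, so Theorem \ref{dimdim-1} applies and yields $\dim\Db(\mod R)\ge\dim R-1$.

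There is no real obstacle here: the only content beyond invoking Theorem \ref{dimdim-1} is the standard fact that a reduced Noetherian ring is a field after localizing at any minimal prime, together with the inclusion $\Assh R\subseteq\Min R$. Both are elementary, so the corollary is essentially a clean specialization of the theorem.
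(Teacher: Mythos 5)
Your proof is correct and is exactly the verification the paper has in mind: the paper simply labels the corollary an ``obvious conclusion'' of Theorem \ref{dimdim-1}, and your two observations (that $\Assh R\subseteq\Min R$, and that a reduced ring localized at a minimal prime is a field) are precisely the standard facts being elided.
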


Finally, we give a note on lower bounds for the dimensions of the singularity category and the stable category of Cohen-Macaulay modules.
For a local ring $R$ with maximal ideal $\m$, the {\em codimension} of $R$ is defined by
$$
\codim R=\edim R-\dim R,
$$
where $\edim R$ denotes the embedding dimension of $R$, i.e., $\edim R=\dim_{R/\m}\m/\m^2$.
The following result is proved by Bergh, Iyengar, Krause and Oppermann \cite[Corollary 5.10]{BIKO} in the case where $R$ is $\m$-adically complete, and done by Avramov and Iyengar \cite{AI} in the general case.

\begin{theorem}[Avramov-Iyengar, Bergh-Iyengar-Krause-Oppermann]\label{aibiko}
Let $R$ be a complete intersection local ring.
Then there is an inequality $\dim\Dsg(R)\ge\codim R-1$.
\end{theorem}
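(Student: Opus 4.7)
The plan is to bound $\dim\Dsg(R)$ below using the theory of cohomology operators and support varieties for complete intersections. Write $c = \codim R$ and let $k$ denote the residue field. The Eisenbud cohomology operators produce a polynomial ring $S = k[\chi_1,\ldots,\chi_c]$ on $c$ degree-$2$ generators acting naturally on $\Ext_R^{*}(-,-)$, so that $\Ext_R^{*}(M,k)$ is a finitely generated graded $S$-module for every finitely generated $R$-module $M$. The associated support variety $V_R(M) := V(\operatorname{ann}_S\Ext_R^{*}(M,k)) \subseteq \operatorname{Proj} S \cong \mathbb{P}_k^{c-1}$ is invariant under $\sus$, additive on direct sums, vanishes on perfect complexes, and for each exact triangle $X \to Y \to Z \to \sus X$ in $\Dsg(R)$ satisfies $V_R(Y) \subseteq V_R(X) \cup V_R(Z)$. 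Crucially, $V_R(k) = \mathbb{P}_k^{c-1}$, reflecting that $\Ext_R^{*}(k,k)$ is a faithful graded $S$-module (it even contains $S$ as a subring after a suitable base extension).

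The main argument proceeds via Rouquier's ghost principle. For an object $G \in \Dsg(R)$, put $J_G = \operatorname{ann}_S\Ext_R^{*}(G,k)$. A morphism $f$ in $\Dsg(R)$ is called a $G$-\emph{ghost} if $\Hom_{\Dsg(R)}(\sus^i G, f) = 0$ for all $i$. The key lemma to establish is that multiplication by any homogeneous $\chi \in J_G$ defines a $G$-ghost morphism $\chi_M \colon M \to \sus^{|\chi|}M$ in $\Dsg(R)$ for every object $M$; this will follow from the $S$-linearity of the connecting maps in the long exact Ext sequence. Rouquier's principle then asserts that if $\Dsg(R) = \gen{G}{n}{}$, every composition of $n$ $G$-ghosts in $\Dsg(R)$ must vanish. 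Hence it will suffice to exhibit, whenever $n \le c-1$, a nonzero composition $\xi_n \circ \cdots \circ \xi_1 \colon k \to \sus^{\ast}k$ of $G$-ghosts arising from elements $\xi_i \in J_G$, which would contradict the bound and force $n \ge c$.

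The hard part will be producing such nonvanishing compositions from $J_G$. When $V_R(G)$ is a proper subvariety of $\mathbb{P}^{c-1}$, Krull's Hauptidealsatz yields regular sequences in $J_G$, and the faithfulness of the $S$-action on $\Ext_R^{*}(k,k)$ together with $\dim S = c$ will ensure that no regular sequence of length at most $c-1$ in $J_G$ can act as zero on $\Ext_R^{*}(k,k)$. The delicate case is when $V_R(G) = \mathbb{P}^{c-1}$, so $J_G$ contains no nonzero homogeneous element of positive degree: here no ghost arises directly from $J_G$, and one must replace $G$ with an auxiliary Koszul object built from a regular sequence defining $R$ as a complete intersection, and then transfer generation hypotheses between the modified generator and $G$. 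Reconciling these two cases uniformly — and in particular handling the $\m$-adic completion step required by \cite{AI} to remove the completeness hypothesis present in \cite{BIKO} — is the technical crux, and it is what ultimately yields $\dim\Dsg(R) \ge c-1$.
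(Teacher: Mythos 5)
The paper does not prove this theorem at all; it is quoted from \cite[Corollary 5.10]{BIKO} for the complete case and from \cite{AI} for the general case, and then invoked as a black box in Proposition \ref{saigoo}. So there is no proof of it in the paper to compare against. Judged on its own merits, your sketch does identify the broad strategy of \cite{BIKO}: the ring $S=k[\chi_1,\dots,\chi_c]$ of Eisenbud operators acting centrally on $\Dsg(R)$, support varieties, and a ghost-composition argument of Rouquier--Oppermann type.

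However, the plan has a gap exactly at the decisive point. If $\Dsg(R)=\gen{G}{n}{}$, then $k\in\gen{G}{n}{}$, and since support varieties are monotone under shifts, cones and summands, $V_R(k)\subseteq V_R(G)$. Faithfulness of the $S$-action on $\Ext_R^*(k,k)$ gives $V_R(k)=\mathbb{P}^{c-1}$, hence $V_R(G)=\mathbb{P}^{c-1}$ and $J_G=0$ (as $S$ is a domain). So for a strong generator $G$ the ``proper subvariety'' branch of your argument is vacuous, and the entire weight falls on what you call the ``delicate case,'' which you explicitly defer. The Koszul-object machinery you allude to --- replacing $G$ by $G/\!\!/\chi$ for suitable operators, controlling levels across that replacement, and inductively producing a nonvanishing composition of ghosts of the right length --- is precisely the technical content of \cite{BIKO}, and none of it is supplied here, so the proposal is not yet a proof. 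Two smaller issues: your ghost lemma as stated requires $\chi\cdot\Ext_R^*(G,M)=0$, whereas $\chi\in J_G$ only gives $\chi\cdot\Ext_R^*(G,k)=0$; the passage from $k$ to arbitrary $M$ holds only up to radical, so a priori you only get ghosts from powers of $\chi$, which changes the length bookkeeping. And the reduction from a general complete intersection local ring to the $\m$-adically complete case (the \cite{AI} contribution) is flagged as necessary but not carried out.
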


By using this theorem, we obtain a lower bound over an arbitrary commutative Noetherian ring.
We denote by $\CI(R)$ the {\em complete intersection locus} of $R$, namely, the set of prime ideals $\p$ of $R$ such that the local ring $R_\p$ is a complete intersection.

\begin{proposition}\label{saigoo}
Let $R$ be a ring.
Then the inequality
$$
\dim\Dsg(R)\ge\sup\{\,\codim R_\p\mid\p\in\CI(R)\,\}-1.
$$
holds.
If $R$ is Gorenstein, then
$$
\dim\sCM(R)\ge\sup\{\,\codim R_\p\mid\p\in\CI(R)\,\}-1.
$$
\end{proposition}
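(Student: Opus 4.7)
The plan is to reduce to the complete intersection case, which is Theorem~\ref{aibiko}, by localizing at a prime $\p\in\CI(R)$ and showing that this localization does not increase the dimension of the singularity category.

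First I would check that the localization functor $(-)_\p\colon\Db(\mod R)\to\Db(\mod R_\p)$ descends to a well-defined exact functor $\Dsg(R)\to\Dsg(R_\p)$. This is immediate since localization carries finitely generated projective modules to finitely generated projective modules, hence perfect complexes to perfect complexes, so the Verdier quotient property gives the induced functor.

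Next I would verify that the induced functor $\Dsg(R)\to\Dsg(R_\p)$ is essentially dense. Given $Y\in\Dsg(R_\p)$, lift it to an object $\tilde Y\in\Db(\mod R_\p)$; by Lemma~\ref{ess dense of localization}(1), the functor $(-)_\p\colon\Db(\mod R)\to\Db(\mod R_\p)$ is dense, so there is some $X\in\Db(\mod R)$ with $X_\p\cong\tilde Y$ in $\Db(\mod R_\p)$, and the image of $X$ in $\Dsg(R_\p)$ is then isomorphic to $Y$. Applying Lemma~\ref{essentially dense} yields $\dim\Dsg(R_\p)\le\dim\Dsg(R)$. Since $R_\p$ is by hypothesis a complete intersection local ring, Theorem~\ref{aibiko} gives $\dim\Dsg(R_\p)\ge\codim R_\p-1$, and chaining the two inequalities produces
\[
\dim\Dsg(R)\ \ge\ \codim R_\p-1\qquad\text{for every }\p\in\CI(R).
\]
Taking the supremum over $\p\in\CI(R)$ proves the first assertion (with the convention that the supremum over the empty set is $-\infty$, in which case the inequality is vacuous).

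For the Gorenstein case, I would note that if $R$ is Gorenstein then so is every localization $R_\p$, so both $\sCM(R)$ and $\sCM(R_\p)$ are defined, and Buchweitz's theorem (Theorem~\ref{mikan}) gives triangle equivalences $\sCM(R)\simeq\Dsg(R)$ and $\sCM(R_\p)\simeq\Dsg(R_\p)$. Since triangle equivalences preserve dimension, the inequality for $\sCM(R)$ follows directly from the inequality for $\Dsg(R)$ just established. I do not anticipate any serious obstacle here; the only subtlety is the first step of verifying density of the induced functor on singularity categories, but this is a formal consequence of density at the level of $\Db(\mod-)$ (Lemma~\ref{ess dense of localization}(1)) together with the universal property of the Verdier quotient.
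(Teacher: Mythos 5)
Your proposal is correct and follows essentially the same approach as the paper: both localize at a prime $\p\in\CI(R)$, use Lemma~\ref{ess dense of localization}(1) (density of $\Db(\mod R)\to\Db(\mod R_\p)$) to deduce that the induced functor on singularity categories is essentially dense, conclude $\dim\Dsg(R_\p)\le\dim\Dsg(R)$ via Lemma~\ref{essentially dense}, and then invoke Theorem~\ref{aibiko} for the complete intersection $R_\p$ and Theorem~\ref{mikan} for the Gorenstein case. The extra step you spell out---lifting an object of $\Dsg(R_\p)$ to $\Db(\mod R_\p)$ before applying density---is exactly the argument the paper leaves implicit.
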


\begin{proof}
The second assertion follows from the first one and Theorem \ref{mikan}.
We may assume that $\dim\Dsg(R)$ is finite, say $n$.
There is an object $G\in\Dsg(R)$ such that $\Dsg(R)=\gen{G}{n+1}{}$.
Fix a prime ideal $\p\in\CI(R)$.
Using Lemma \ref{ess dense of localization}(1), we see that $\Dsg(R_\p)=\gen{G_\p}{n+1}{}$ holds.
Hence we have $n\ge\dim\Dsg(R_\p)$.
Since $R_\p$ is a complete intersection local ring, the right hand side is more than or equal to $\codim R_\p-1$ by Theorem \ref{aibiko}.
\end{proof}

\begin{remark}\label{ai}
Avramov and Iyengar \cite{AI} actually give a lower bound for the dimension of the singularity category of an arbitrary local ring, using its {\em conormal free rank}.
A similar argument to the proof of Proposition \ref{saigoo} can give a better lower bound than the proposition.
\end{remark}

\section*{Acknowledgments}
The authors express their gratitude to Osamu Iyama for his crucial questions and references, to Srikanth Iyengar for his significant suggestions throughout this paper, and to Yuji Yoshino for his valuable discussions with the second author on the proof of Theorem \ref{main result}.
The authors are indebted to Luchezar Avramov and Shunsuke Takagi for their helpful advice on the contents of Section \ref{lower}.
The authors thank Naoyuki Matsuoka and Michel Van den Bergh for their useful comments concerning Corollary \ref{mtk} and Remark \ref{vdb}, respectively.
Part of this work was done during the stay of the second author at University of Nebraska-Lincoln in 2011.
He is grateful for their kind hospitality.


\end{document}